\numberwithin{equation}{section}    
\definecolor{dblue}{HTML}{0455BF}
\definecolor{dgreen}{HTML}{02724A}
\definecolor{dgreen2}{HTML}{025951}
\definecolor{dred}{HTML}{D90404}
\definecolor{dviolet}{HTML}{42208C}
\definecolor{labelkey}{HTML}{025951}
\definecolor{refkey}{HTML}{025951}
\setlist{itemsep=-2.0pt}
\g@addto@macro\th@plain{
\thm@headfont{\bfseries\sffamily}
\thm@notefont{\mdseries}}
\g@addto@macro\th@definition{
\thm@headfont{\bfseries\sffamily}
\thm@notefont{\mdseries}}
\g@addto@macro\th@remark{
\thm@headfont{\bfseries\sffamily}
\thm@notefont{\mdseries}}
\theoremstyle{plain}
\newtheorem{theorem}{Theorem}[section]
\newtheorem{proposition}[theorem]{Proposition}
\newtheorem{corollary}[theorem]{Corollary}
\newtheorem{lemma}[theorem]{Lemma}
\theoremstyle{definition}
\newtheorem{definition}[theorem]{Definition}
\newtheorem{example}[theorem]{Example}
\newtheorem{problem}[theorem]{Problem}
\newtheorem{assumption}[theorem]{Assumption}
\theoremstyle{remark}
\newtheorem{remark}[theorem]{Remark}
\DeclareMathDelimiterSet{\scal}[2]{
\selectdelim[l]<{#1}
\mathpunct{}\selectdelim[p]|
{#2}\selectdelim[r]>}
\newcommand{\menge}[2]{\bigl\{{#1}\mid{#2}\bigr\}} 
\DeclareMathDelimiterSet{\Menge}[2]{\selectdelim[l]\{
{#1}\selectdelim[m]|{#2}\selectdelim[r]\}}
\newcommand*\Cdot{{\mkern 1.6mu\cdot\mkern 1.6mu}}
\def\upintkern@{\mkern-7mu\mathchoice{\mkern-3.5mu}{}{}{}}
\def\upintdots@{\mathchoice{\mkern-4mu\@cdots\mkern-4mu}%
{{\cdotp}\mkern1.5mu{\cdotp}\mkern1.5mu{\cdotp}}%
{{\cdotp}\mkern1mu{\cdotp}\mkern1mu{\cdotp}}%
{{\cdotp}\mkern1mu{\cdotp}\mkern1mu{\cdotp}}}
\DeclareFontFamily{OMX}{mdbch}{}
\DeclareFontShape{OMX}{mdbch}{m}{n}{ <->s * [0.8]  mdbchr7v }{}
\DeclareFontShape{OMX}{mdbch}{b}{n}{ <->s * [0.8]  mdbchb7v }{}
\DeclareFontShape{OMX}{mdbch}{bx}{n}{<->ssub * mdbch/b/n}{}
\DeclareSymbolFont{uplargesymbols}{OMX}{mdbch}{m}{n}
\DeclareMathSymbol{\upintop}{\mathop}{uplargesymbols}{82}
\DeclareMathSymbol{\upointop}{\mathop}{uplargesymbols}{"48}
\renewcommand{\int}{\DOTSI\upintop\ilimits@}
\renewcommand{\oint}{\DOTSI\upointop\ilimits@}
\DeclareFontFamily{U}{BOONDOX-calo}{\skewchar\font=45 }
\DeclareFontShape{U}{BOONDOX-calo}{m}{n}{
  <-> s*[1.05] BOONDOX-r-calo}{}
\DeclareFontShape{U}{BOONDOX-calo}{b}{n}{
  <-> s*[1.05] BOONDOX-b-calo}{}
\DeclareMathAlphabet{\mathcalboondox}{U}{BOONDOX-calo}{m}{n}
\SetMathAlphabet{\mathcalboondox}{bold}{U}{BOONDOX-calo}{b}{n}
\DeclareMathAlphabet{\mathbcalboondox}{U}{BOONDOX-calo}{b}{n}
\newcommand{\MM}{\mathcalboondox{m}}
\newcommand{\LL}{\mathcalboondox{l}}
\newcommand{\CC}{\mathcalboondox{c}}
\newcommand{\RR}{\mathbb{R}}
\newcommand{\NN}{\mathbb{N}}
\newcommand{\HH}{\mathcal{H}}
\newcommand{\GG}{\mathfrak{H}}
\newcommand{\AW}{\mathsf{A}_{\omega}}
\newcommand{\BW}{\mathsf{B}_{\omega}}
\newcommand{\HW}{\mathsf{H}_{\omega}}
\newcommand{\TW}{\mathsf{T}_{\omega}}
\newcommand{\LW}{\mathsf{L}_{\omega}}
\newcommand{\BE}{\EuScript{B}}
\newcommand{\FF}{\EuScript{F}}
\newcommand{\kut}{\boldsymbol{\EuScript{K}}}
\newcommand{\sad}{\boldsymbol{\EuScript{S}}}
\newcommand{\pinf}{{+}\infty}
\newcommand{\minf}{{-}\infty}
\newcommand{\zeroun}{\intv[o]{0}{1}}
\newcommand{\RXX}{\intv{\minf}{\pinf}}
\newcommand{\RX}{\intv[l]0{\minf}{\pinf}}
\newcommand{\RP}{\intv[r]0{0}{\pinf}}
\newcommand{\RPP}{\intv[o]0{0}{\pinf}}
\newcommand{\emp}{\varnothing}
\newcommand{\forallmu}{\forall^{\mu}}
\newcommand{\Int}{\displaystyle\int}
\newcommand{\minimize}[2]{\underset{\substack{{#1}}}
{\operatorname{minimize}}\;\;#2}
\newcommand{\pushfwd}%
{\ensuremath{\mbox{\Large$\,\triangleright\,$}}}
\DeclareMathOperator{\Argmin}{Argmin}
\newcommand{\Id}{\mathrm{Id}}
\newcommand{\moyo}[2]{\leftindex[I]^{#2}{#1}}
\DeclareMathOperator{\cran}{\overline{ran}}
\DeclareMathOperator{\ran}{ran}
\DeclareMathOperator{\cdom}{\overline{dom}}
\DeclareMathOperator{\dom}{dom}
\DeclareMathOperator{\gra}{gra}
\DeclareMathOperator{\zer}{zer}
\DeclareMathOperator{\rec}{rec}
\DeclareMathOperator{\prox}{prox}
\DeclareMathOperator{\proj}{proj}
\DeclareMathOperator{\spc}{\overline{span}}
\DeclareFontFamily{U}{mathb}{}
\DeclareFontShape{U}{mathb}{m}{n}{<-5.5> mathb5 <5.5-6.5> mathb6 
<6.5-7.5> mathb7 <7.5-8.5> mathb8 <8.5-9.5> mathb9 <9.5-11> mathb10
<11-> mathb12}{}
\DeclareSymbolFont{mathb}{U}{mathb}{m}{n}
\DeclareMathSymbol{\blackdiamond}{\mathbin}{mathb}{"0C}
\renewcommand{\leq}{\leqslant}
\renewcommand{\geq}{\geqslant}
\newcommand{\exi}{\exists\,}
\newcommand{\weakly}{\rightharpoonup}
\newcommand{\mae}{\text{\normalfont$\mu$-a.e.}}
\renewenvironment{abstract}{%
\vspace*{-0.50cm}
\small
\quotation%
\noindent%
{\normalfont\bfseries\sffamily
\nobreak\abstractname\ }%
}{%
\endquotation%
\medskip
}
\renewcommand{\abstractname}{Abstract.}
\newcommand\mscsname{MSC classification.}
\newenvironment{keywords}
{\renewcommand\abstractname{\keywordsname}\begin{abstract}}
{\end{abstract}}
\newenvironment{MSC}
{\renewcommand\abstractname{\mscsname}\begin{abstract}}
{\end{abstract}}
\newcommand{\email}[1]{\href{mailto:#1}{\nolinkurl{#1}}}
\renewcommand*\Affilfont{\normalfont\normalsize}
\newcommand\affilcr{\protect\\ \protect\Affilfont}
\renewcommand\AB@affilsepx{\protect\\[0.5em]}
\author[1]{Minh N. B\`ui}
\affil[1]{University of Graz
\affilcr
Department of Mathematics and Scientific Computing, NAWI Graz
\affilcr
8010 Graz, Austria
\affilcr
\email{minh.bui@uni-graz.at}
}
\author[2]{Patrick L. Combettes}
\affil[2]{North Carolina State University
\affilcr
Department of Mathematics
\affilcr
Raleigh, NC 27695, USA
\affilcr
\email{plc@math.ncsu.edu}
}
\begin{document}

\title{%
Hilbert Direct Integrals of Monotone Operators\thanks{%
Contact author: P. L. Combettes.
Email: \email{plc@math.ncsu.edu}.
Phone: +1 919 515 2671.
The work of P. L. Combettes was supported by the National
Science Foundation under grant CCF-2211123.
}}

\date{~}

\maketitle

\centerline{\textit{Dedicated to the memory of H\'edy Attouch}}

\vspace{12mm}

\begin{abstract}
Finite Cartesian products of operators play a central role in
monotone operator theory and its applications. 
Extending such products to arbitrary families of operators
acting on different Hilbert spaces is an open problem, which we
address by introducing the Hilbert direct integral of a family of
monotone operators. The
properties of this construct are studied and conditions under which
the direct integral inherits the properties of the factor operators
are provided. The question of determining whether the Hilbert direct
integral of a family of subdifferentials of convex functions is
itself a subdifferential leads us to introducing the Hilbert direct
integral of a family of functions. We establish explicit
expressions for evaluating the Legendre conjugate, subdifferential,
recession function, Moreau envelope, and proximity operator of such
integrals. Next, we propose a duality framework for monotone
inclusion problems involving integrals of linearly composed
monotone operators and show its pertinence towards the development
of numerical solution methods. Applications to inclusion and
variational problems are discussed.
\end{abstract}

\begin{keywords}
Integration of set-valued mappings,
measurable vector field,
monotone operator,
optimization,
variational analysis.
\end{keywords}

\begin{MSC}
49J53, 46G10, 58E30
\end{MSC}

\newpage

\section{Introduction}
\label{sec:1}

Let $\mathsf{H}$ be a real Hilbert space with scalar product
$\scal{\Cdot}{\Cdot}_{\mathsf{H}}$ and power set $2^{\mathsf{H}}$.
An operator $\mathsf{A}\colon\mathsf{H}\to 2^{\mathsf{H}}$ is
monotone if 
\begin{equation}
\label{e:b0}
(\forall\mathsf{x}\in\mathsf{H})
(\forall\mathsf{y}\in\mathsf{H})
(\forall\mathsf{x}^*\in\mathsf{A}\mathsf{x})
(\forall\mathsf{y}^*\in\mathsf{A}\mathsf{y})
\quad
\scal{\mathsf{x}-\mathsf{y}}{
\mathsf{x}^*-\mathsf{y}^*}_{\mathsf{H}}\geq 0.
\end{equation}
Cartesian products of monotone operators are important
constructs that arise in many foundational and practical aspects
of the theory \cite{Livre1,Brez73,Brow68,Acnu24,Ghou09,Show97}.
Such products can be defined in a straightforward manner for a
finite family $(\mathsf{A}_k)_{1\leq k\leq p}$ of monotone
operators acting, respectively, on real Hilbert spaces
$(\mathsf{H}_k)_{1\leq k\leq p}$. Thus, if one denotes by 
$\HH=\mathsf{H}_1\oplus\cdots\oplus\mathsf{H}_p$ the
Hilbert direct sum of $(\mathsf{H}_k)_{1\leq k\leq p}$ and by
$x=(\mathsf{x}_1,\ldots,\mathsf{x}_p)$ a generic vector in $\HH$,
the product operator is \cite{Livre1}
\begin{equation}
\label{e:b1}
A\colon\HH\to 2^{\HH}\colon x\mapsto
\menge{x^*\in\HH}{\brk!{\forall k\in\{1,\ldots,p\}}\,\,
\mathsf{x}_k^*\in\mathsf{A}_k\mathsf{x}_k}.
\end{equation}
A fundamental instance of an infinite product arises in
\cite{Brez73} in the context of evolution
equations. There, $(\Omega,\FF,\mu)$ is a measure space,
$\mathsf{H}$ is a separable real Hilbert space,
$\mathsf{A}\colon\mathsf{H}\to 2^\mathsf{H}$ is a monotone
operator, $\HH=L^2(\Omega,\FF,\mu;\mathsf{H})$, and a product
operator is defined as
\begin{equation}
\label{e:b2}
A\colon\HH\to 2^{\HH}\colon x\mapsto
\menge{x^*\in\HH}{(\forallmu\omega\in\Omega)\,\,
x^*(\omega)\in\mathsf{A}\brk!{x(\omega)}},
\end{equation}
where, following \cite{Sch93b}, the symbol $\forallmu$ means
``for $\mu$-almost every.''
Another instance of an infinite product appears in 
\cite[Section~III.2]{Atto79} in the context of nonautonomous
evolution equations, where $\mu$ is the Lebesgue measure, 
$(\mathsf{A}_{t})_{t\in\intv{0}{T}}$ is a family of monotone
operators from $\mathsf{H}$ to $2^{\mathsf{H}}$,
$\HH=L^2(\intv{0}{T};\mathsf{H})$, and
\begin{equation}
\label{e:b3}
A\colon\HH\to 2^{\HH}\colon x\mapsto
\menge{x^*\in\HH}{\brk!{\forallmu t\in\intv{0}{T}}\,\,
x^*(t)\in\mathsf{A}_t\brk!{x(t)}}.
\end{equation}
Similar examples arise in
probability theory \cite{Bism73},
circuit theory \cite{Sepu23},
approximation theory \cite{Ibap21},
calculus of variations \cite{Ekel99},
partial differential equations \cite{Ghou09},
variational analysis \cite{Penn03},
convex analysis \cite{Rock74},
and evolution systems \cite{Show97}.
In terms of modeling, \eqref{e:b1} is limited to a finite number
of operators, \eqref{e:b2} requires that all the factor operators
be identical to $\mathsf{A}$, and \eqref{e:b3} imposes that all
the factor spaces be identical to $\mathsf{H}$ and operates with
the standard Lebesgue measure space $\intv{0}{T}$. The above
examples are not based on a common mathematical setup and the
question of defining a unifying theory for arbitrary products of
monotone operators acting on different spaces is open. This
question is not only of theoretical interest, but it is also
motivated by applications in areas such as dynamical systems,
stochastic optimization, and inverse problems. It is the objective
of the present paper to fill this gap by introducing such a
framework, studying the properties of the resulting product
operators, and exploring some of their applications. 

To support our framework, we bring into play the notion of a
direct integral of Hilbert spaces, which is an attempt to
extend Hilbert direct sums from finite families to arbitrary ones.
This construction originates in papers published around World War
II \cite{Gode49,Gode51,Kond44,vonN49}. We follow
\cite[Section~II.{\S}1]{Dixm69}.

\begin{definition}[\protect
{\cite[D\'efinition~II.{\S}1.1]{Dixm69}}]
\label{d:1}
Let $(\Omega,\FF,\mu)$ be a complete $\sigma$-finite measure space,
let $(\HW)_{\omega\in\Omega}$ be a family of real
Hilbert spaces, and let $\prod_{\omega\in\Omega}\HW$ be
the usual real vector space of mappings $x$ defined on $\Omega$
such that $(\forall\omega\in\Omega)$ $x(\omega)\in\HW$.
Suppose that $\mathfrak{G}$ is a vector subspace of
$\prod_{\omega\in\Omega}\HW$ which satisfies the following:
\begin{enumerate}[label={\normalfont[\Alph*]}]
\item
\label{d:1a}
For every $x\in\mathfrak{G}$, the function $\Omega\to\RR\colon
\omega\mapsto\norm{x(\omega)}_{\HW}$ is
$\FF$-measurable.
\item
\label{d:1b}
For every $x\in\prod_{\omega\in\Omega}\HW$,
\begin{equation}
\bigl[\;(\forall y\in\mathfrak{G})\;\;
\Omega\to\RR\colon
\omega\mapsto\scal{x(\omega)}{y(\omega)}_{\HW}
\,\,\text{is $\FF$-measurable}\;\bigr]
\quad\Rightarrow\quad x\in\mathfrak{G}.
\end{equation}
\item
\label{d:1c}
There exists a sequence $(e_n)_{n\in\NN}$ in $\mathfrak{G}$ such
that $(\forall\omega\in\Omega)$
$\spc\{e_n(\omega)\}_{n\in\NN}=\HW$.
\end{enumerate}
Then $((\HW)_{\omega\in\Omega},\mathfrak{G})$ is
an \emph{$\FF$-measurable vector field of real Hilbert spaces}.
\end{definition}

We shall operate within the framework of
\cite[Section~II.{\S}1.5]{Dixm69}, which revolves around the
following assumption.

\begin{assumption}
\label{a:1}
Let $(\Omega,\FF,\mu)$ be a complete $\sigma$-finite measure space,
let $((\HW)_{\omega\in\Omega},\mathfrak{G})$ be an
$\FF$-measurable vector field of real Hilbert spaces, and set
\begin{equation}
\label{e:4a5t}
\GG=\Menge3{x\in\mathfrak{G}}{\int_{\Omega}
\norm{x(\omega)}_{\HW}^2\mu(d\omega)<\pinf}.
\end{equation}
Let $\HH$ be the real vector space of equivalence classes of
$\mae$ equal mappings in $\GG$ equipped with the scalar product
\begin{equation}
\label{e:y9d2}
\scal{\Cdot}{\Cdot}_{\HH}\colon\HH\times\HH\to\RR\colon
(x,y)\mapsto\int_{\Omega}
\scal{x(\omega)}{y(\omega)}_{\HW}\mu(d\omega),
\end{equation}
where we adopt the common practice of designating by $x$ both an
equivalence class in $\HH$ and a representative of it in $\GG$.
Then $\HH$ is a Hilbert space
\cite[Proposition~II.{\S}1.5(i)]{Dixm69}, called
the \emph{Hilbert direct integral of
$(\HW)_{\omega\in\Omega}$ relative to
$\mathfrak{G}$}.
Following \cite[D\'efinition~II.{\S}1.3]{Dixm69}, we write
\begin{equation}
\label{e:n5rt}
\HH=\leftindex^{\mathfrak{G}}{\int}_{\Omega}^{\oplus}
\HW\mu(d\omega).
\end{equation}
\end{assumption}

We are now in a position to propose a definition for an arbitrary
product of set-valued operators acting on different Hilbert
spaces.

\begin{definition}
\label{d:6}
Suppose that Assumption~\ref{a:1} is in force and,
for every $\omega\in\Omega$, let
$\AW\colon\HW\to 2^{\HW}$. The \emph{Hilbert
direct integral of the operators
$(\mathsf{A}_\omega)_{\omega\in\Omega}$ relative to
$\mathfrak{G}$} is 
\begin{equation}
\label{e:b5}
\leftindex^{\mathfrak{G}}{\int}_{\Omega}^{\oplus}
\AW\mu(d\omega)\colon\HH\to 2^{\HH}\colon x\mapsto
\menge{x^*\in\HH}{(\forallmu\omega\in\Omega)\,\,
x^*(\omega)\in\AW\brk!{x(\omega)}}.
\end{equation}
\end{definition}

In tandem with Definition~\ref{d:6}, we introduce the following
notion of an arbitrary direct sum of functions defined
on different Hilbert spaces. In the convex case,
the subdifferential operator will serve as a bridge between
Definitions~\ref{d:6} and \ref{d:5}. Indeed, we shall establish
in Theorem~\ref{t:2} that, under suitable assumptions,
\begin{equation}
\label{e:b6}
\partial\brk3{\leftindex^{\mathfrak{G}}{\int}_{\Omega}^{\oplus}
\mathsf{f}_{\omega}\mu(d\omega)}
=\leftindex^{\mathfrak{G}}{\Int}_{\Omega}^{\oplus}
\partial\mathsf{f}_{\omega}\mu(d\omega).
\end{equation}

\begin{definition}
\label{d:5}
Suppose that Assumption~\ref{a:1} is in force and, for every
$\omega\in\Omega$, let
$\mathsf{f}_{\omega}\colon\HW\to\RXX$.
Suppose that, for every $x\in\GG$, the function
$\Omega\to\RXX\colon\omega\mapsto\mathsf{f}_{\omega}(x(\omega))$
is $\FF$-measurable. The \emph{{Hilbert direct integral of the
functions $(\mathsf{f}_{\omega})_{\omega\in\Omega}$ relative to
$\mathfrak{G}$}} is
\begin{equation}
\leftindex^{\mathfrak{G}}{\int}_{\Omega}^{\oplus}
\mathsf{f}_{\omega}\mu(d\omega)\colon
\HH\to\RXX\colon x\mapsto
\int_{\Omega}\mathsf{f}_{\omega}\brk!{x(\omega)}
\mu(d\omega),
\end{equation}
where we adopt the customary convention that
the integral $\int_\Omega\vartheta d\mu$ of an
$\FF$-measurable function $\vartheta\colon\Omega\to\RXX$ is the
usual Lebesgue integral, except when the Lebesgue integral
$\int_\Omega\max\{\vartheta,0\}d\mu$ is $\pinf$, in which case
$\int_\Omega\vartheta d\mu=\pinf$.
\end{definition}

The remainder of the paper is as follows. Section~\ref{sec:2}
presents our notation and provides preliminary results. The Hilbert
direct integral of a family of set-valued operators introduced in
Definition~\ref{d:6} is studied in Section~\ref{sec:3}. In
particular, we establish conditions under which properties such as
monotonicity, maximal monotonicity, cocoercivity, and averagedness
are transferable from the factor operators to the Hilbert direct
integral. We also establish formulas for the domain, range,
inverse, resolvent, and Yosida approximation of this integral.
Section~\ref{sec:4} focuses on the Hilbert direct integral of
functions of Definition~\ref{d:5}. We provide conditions for
evaluating the Legendre conjugate, the subdifferential, the
recession function, the Moreau envelope, and the proximity operator
of the Hilbert direct integral of a family of functions by applying
these operations to each factor and then taking the Hilbert direct
integral of the resulting family. In Section~\ref{sec:5}, the
results of Section~\ref{sec:3} are used to investigate integral
inclusion problems involving a family of linearly composed
monotone operators. In this context, we propose a duality theory
and discuss some applications.

\section{Notation and theoretical tools}
\label{sec:2}

\subsection{Notation}

We follow the notation of \cite{Livre1}, to which we refer for a
detailed account of the following notions.

Let $\HH$ be a real Hilbert space with identity operator
$\Id_{\HH}$, scalar product $\scal{\Cdot}{\Cdot}_{\HH}$, and 
associated norm $\norm{\Cdot}_{\HH}$. The weak convergence of a
sequence $(x_n)_{n\in\NN}$ to $x$ is denoted by $x_n\weakly x$,
and $x_n\to x$ denotes its strong convergence.

Let $C$ be a nonempty closed convex subset of $\HH$.
Then $\iota_C$ is the indicator function of $C$,
$d_C$ is the distance function to $C$,
$\proj_C$ is the projection operator onto $C$,
$C^\ominus$ is the polar cone of $C$,
and $N_C$ is the normal cone operator of $C$.

Let $T\colon\HH\to\HH$ and $\tau\in\RPP$. Then $T$ is nonexpansive
if it is $1$-Lipschitzian, 
$\tau$-averaged if $\tau\in\zeroun$ and
$\Id_{\HH}+\tau^{-1}(T-\Id_{\HH})$
is nonexpansive, $\tau$-cocoercive if 
\begin{equation}
\label{e:coco}
(\forall x\in\HH)(\forall y\in\HH)\quad
\scal{x-y}{Tx-Ty}_{\HH}\geq\tau\norm{Tx-Ty}_{\HH}^2,
\end{equation}
and firmly nonexpansive if it is $1$-cocoercive.

Let $A\colon\HH\to 2^{\HH}$. The domain of $A$ is
$\dom A=\menge{x\in\HH}{Ax\neq\emp}$, the range of $A$ is
$\ran A=\bigcup_{x\in\dom A}Ax$, the set of zeros of $A$ is
$\zer A=\menge{x\in\HH}{0\in Ax}$, and the graph of $A$ is 
$\gra A=\menge{(x,x^*)\in\HH\times\HH}{x^*\in Ax}$. The inverse of
$A$ is the operator $A^{-1}\colon\HH\to 2^{\HH}$ with graph
$\gra A^{-1}=\menge{(x^*,x)\in\HH\times\HH}{x^*\in Ax}$.
The resolvent of $A$ is $J_A=(\Id_{\HH}+A)^{-1}$,
and the Yosida approximation of $A$ of index $\gamma\in\RPP$ is
$\moyo{A}{\gamma}
=(\Id_{\HH}-J_{\gamma A})/\gamma
=(\gamma\Id_{\HH}+A^{-1})^{-1}$.
Suppose that $A$ is monotone (see \eqref{e:b0}).
Then $A$ is maximally monotone if there exists no
monotone operator $B\colon\HH\to 2^{\HH}$ such that 
$\gra A\subset\gra B\neq\gra A$. In this case, 
$\dom J_A=\HH$, $J_A$ is firmly nonexpansive, and
for every $x\in\dom A$, $Ax$ is nonempty, closed, and convex,
and we set $\moyo{A}{0}x=\proj_{Ax}0$.

We denote by $\Gamma_0(\HH)$ the class of functions 
$f\colon\HH\to\RX$ which are lower semicontinuous, convex, and 
such that $\dom f=\menge{x\in\HH}{f(x)<\pinf}\neq\emp$.
Let $f\in\Gamma_0(\HH)$. The conjugate of $f$ is 
$\Gamma_0(\HH)\ni f^*\colon x^*\mapsto
\sup_{x\in\HH}(\scal{x}{x^*}_{\HH}-f(x))$
and the subdifferential of $f$ is the maximally monotone operator
\begin{equation}
\label{e:subdiff}
\partial f\colon\HH\to 2^{\HH}\colon x\mapsto\menge{x^*\in\HH}
{(\forall y\in\HH)\,\,\scal{y-x}{x^*}_{\HH}+f(x)\leq f(y)}.
\end{equation}
The proximity operator $\prox_f=J_{\partial f}$ of $f$ maps every
$x\in\HH$ to the unique minimizer of the function 
$\HH\to\RX\colon y\mapsto f(y)+\norm{x-y}_{\HH}^2/2$,
the Moreau envelope of $f$ of index $\gamma\in\RPP$ is 
$\moyo{f}{\gamma}\colon\HH\to\RR\colon x\mapsto
\min_{y\in\HH}(f(y)+\norm{x-y}_{\HH}^2/(2\gamma))$,
and $\rec f$ is the recession function of $f$.

\subsection{Integrals of set-valued mappings}

Let $(\Omega,\FF,\mu)$ be a complete $\sigma$-finite measure space
and let $\mathsf{H}$ be a separable real Hilbert space.
For every $p\in\intv[r]{1}{\pinf}$, set
\begin{equation}
\mathscr{L}^p\brk!{\Omega,\FF,\mu;\mathsf{H}}
=\Menge3{x\colon\Omega\to\mathsf{H}}{x\,\,
\text{is $(\FF,\BE_{\mathsf{H}})$-measurable and}\,\,
\int_{\Omega}\norm{x(\omega)}_{\mathsf{H}}^p\,\mu(d\omega)<\pinf},
\end{equation}
where $\BE_{\mathsf{H}}$ stands for the Borel $\sigma$-algebra of
$\mathsf{H}$. The Lebesgue (also called Bochner \cite{Hyto16})
integral of a mapping
$x\in\mathscr{L}^1(\Omega,\FF,\mu;\mathsf{H})$ is denoted by
$\int_{\Omega}x(\omega)\mu(d\omega)$. We denote by
$L^p\brk{\Omega,\FF,\mu;\mathsf{H}}$ the space of equivalence
classes of $\mae$ equal mappings in
$\mathscr{L}^p\brk{\Omega,\FF,\mu;\mathsf{H}}$; see
\cite[Section~V.{\S}7]{Sch93b} for background. The Aumann integral
of a set-valued mapping $X\colon\Omega\to 2^{\mathsf{H}}$ is 
\begin{equation}
\label{e:aum3}
\int_{\Omega}X(\omega)\mu(d\omega)=
\Menge3{\int_{\Omega}x(\omega)\mu(d\omega)}{
x\in\mathscr{L}^1\brk!{\Omega,\FF,\mu;\mathsf{H}}\,\,
\text{and}\,\,(\forallmu\omega\in\Omega)\,\,
x(\omega)\in X(\omega)}.
\end{equation}

\subsection{Hilbert direct integrals of Hilbert spaces}
\label{sec:JD}

Going back to Definition~\ref{d:1} and Assumption~\ref{a:1},
the following examples of Hilbert direct integrals will be used
repeatedly.

\begin{example}
\label{ex:1}
Here are instances of measurable vector fields and Hilbert direct
integrals based on
\cite[Exemples on pp.~142--143 and 148]{Dixm69}.
\begin{enumerate}
\item
\label{ex:1i+}
Let $p\in\NN\smallsetminus\{0\}$ and let
$(\alpha_k)_{1\leq k\leq p}\in\RPP^p$. Set
\begin{equation}
\Omega=\set{1,\ldots,p},\quad
\FF=2^{\set{1,\ldots,p}},\quad\text{and}\quad
\brk!{\forall k\in\set{1,\ldots,p}}\;\;
\mu\brk!{\set{k}}=\alpha_k.
\end{equation}
Let $(\mathsf{H}_k)_{1\leq k\leq p}$ be separable real Hilbert
spaces and let
$\mathfrak{G}=\mathsf{H}_1\times\cdots\times\mathsf{H}_p$
be the usual Cartesian product vector space.
Then $((\mathsf{H}_k)_{1\leq k\leq p},\mathfrak{G})$ is an
$\FF$-measurable vector field of real Hilbert spaces
and $\leftindex^{\mathfrak{G}}{\int}_{\Omega}^{\oplus}
\HW\mu(d\omega)$ is the weighted Hilbert direct sum
of $(\mathsf{H}_k)_{1\leq k\leq p}$, that is, the Hilbert space
obtained by equipping $\mathfrak{G}$
with the scalar product
\begin{equation}
\brk!{(\mathsf{x}_k)_{1\leq k\leq p},
(\mathsf{y}_k)_{1\leq k\leq p}}\mapsto
\sum_{k=1}^p\alpha_k
\scal{\mathsf{x}_k}{\mathsf{y}_k}_{\mathsf{H}_k}.
\end{equation}
\item
\label{ex:1i}
In the setting of \ref{ex:1i+}, suppose that
$(\forall k\in\set{1,\ldots,p})$ $\alpha_k=1$. Then
\begin{equation}
\leftindex^{\mathfrak{G}}{\int}_{\Omega}^{\oplus}
\HW\mu(d\omega)=\mathsf{H}_1\oplus\cdots\oplus\mathsf{H}_p
\end{equation}
is the standard Hilbert direct sum of
$(\mathsf{H}_k)_{1\leq k\leq p}$.
\item
\label{ex:1ii}
Let $(\alpha_k)_{k\in\NN}$ be a sequence in $\RPP$ and set
\begin{equation}
\Omega=\NN,\quad\FF=2^{\NN},\quad\text{and}\quad
(\forall k\in\NN)\;\;\mu\brk!{\set{k}}=\alpha_k.
\end{equation}
Let $(\mathsf{H}_k)_{k\in\NN}$ be
separable real Hilbert spaces and set
$\mathfrak{G}=\prod_{k\in\NN}\mathsf{H}_k$.
Then $((\mathsf{H}_k)_{k\in\NN},\mathfrak{G})$ is an
$\FF$-measurable vector field of real Hilbert spaces and
$\leftindex^{\mathfrak{G}}{\int}_{\Omega}^{\oplus}
\HW\mu(d\omega)$ is the Hilbert space obtained
by equipping the vector space
\begin{equation}
\mathfrak{H}=\Menge3{(\mathsf{x}_k)_{k\in\NN}
\in\mathfrak{G}}{\sum_{k\in\NN}\alpha_k
\norm{\mathsf{x}_k}_{\mathsf{H}_k}^2<\pinf}
\end{equation}
with the scalar product
\begin{equation}
\brk!{(\mathsf{x}_k)_{k\in\NN},(\mathsf{y}_k)_{k\in\NN}}
\mapsto\sum_{k\in\NN}\alpha_k
\scal{\mathsf{x}_k}{\mathsf{y}_k}_{\mathsf{H}_k}.
\end{equation}
\item
\label{ex:1iii}
Let $(\Omega,\FF,\mu)$ be a complete $\sigma$-finite measure space,
let $\mathsf{H}$ be a separable real Hilbert space, and set
\begin{equation}
\brk[s]!{\;(\forall\omega\in\Omega)\;\;\HW=\mathsf{H}\;}
\quad\text{and}\quad
\mathfrak{G}=\menge{x\colon\Omega\to\mathsf{H}}{
x\,\,\text{is $(\FF,\BE_{\mathsf{H}})$-measurable}}.
\end{equation}
Then $((\HW)_{\omega\in\Omega},\mathfrak{G})$ is an
$\FF$-measurable vector field of real Hilbert spaces and
\begin{equation}
\leftindex^{\mathfrak{G}}{\int}_{\Omega}^{\oplus}
\HW\mu(d\omega)
=L^2\brk!{\Omega,\FF,\mu;\mathsf{H}}.
\end{equation}
\end{enumerate}
\end{example}

The following results are given as remarks in
\cite[Section~II.{\S}1.3]{Dixm69}. We provide proofs for
completeness.

\begin{lemma}
\label{l:1}
Let $(\Omega,\FF,\mu)$ be a complete $\sigma$-finite measure space
and let $((\HW)_{\omega\in\Omega},\mathfrak{G})$ be
an $\FF$-measurable vector field of Hilbert spaces. Then the
following hold:
\begin{enumerate}
\item
\label{l:1i}
Let $x$ and $y$ be in $\mathfrak{G}$. Then the function
$\Omega\to\RR\colon\omega\mapsto
\scal{x(\omega)}{y(\omega)}_{\HW}$ is $\FF$-measurable.
\item
\label{l:1ii}
Let $x\in\prod_{\omega\in\Omega}\HW$
and $y\in\mathfrak{G}$ be such that $x=y$ $\mae$ Then
$x\in\mathfrak{G}$.
\item
\label{l:1iii}
Let $\xi\colon\Omega\to\RR$ be $\FF$-measurable and
let $x\in\mathfrak{G}$. Then the mapping
$\xi x\colon\omega\mapsto\xi(\omega)x(\omega)$ lies in
$\mathfrak{G}$.
\item
\label{l:1iv}
Let $(x_n)_{n\in\NN}$ be a sequence in $\mathfrak{G}$ and
let $x\in\prod_{\omega\in\Omega}\HW$.
Suppose that $(\forallmu\omega\in\Omega)$ $x_n(\omega)\weakly
x(\omega)$. Then $x\in\mathfrak{G}$.
\item
\label{l:1v}
There exists a sequence $(u_n)_{n\in\NN}$ in $\mathfrak{G}$
such that
\begin{equation}
\begin{cases}
(\forall n\in\NN)\;\;
\Int_{\Omega}\norm{u_n(\omega)}_{\HW}^2
\mu(d\omega)<\pinf
\medskip\\
(\forall\omega\in\Omega)\;\;
\overline{\set!{u_n(\omega)}_{n\in\NN}}=\HW.
\end{cases}
\end{equation}
\end{enumerate}
\end{lemma}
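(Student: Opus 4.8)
The plan is to establish the five items in the listed order, since (i) underpins (ii)--(v) and (iii) is invoked in (v). Items (i)--(iv) are soft consequences of the measurability axioms~\ref{d:1a} and~\ref{d:1b} of Definition~\ref{d:1} together with the completeness of $(\Omega,\FF,\mu)$, while (v) requires an actual construction.

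For (i), since $\mathfrak{G}$ is a vector subspace, $x+y$ and $x-y$ belong to $\mathfrak{G}$; hence, by~\ref{d:1a}, $\omega\mapsto\norm{x(\omega)\pm y(\omega)}_{\HW}$ is $\FF$-measurable, and the polarization identity $\scal{x(\omega)}{y(\omega)}_{\HW}=\bigl(\norm{x(\omega)+y(\omega)}_{\HW}^2-\norm{x(\omega)-y(\omega)}_{\HW}^2\bigr)/4$ gives the claim. For (ii), fix $z\in\mathfrak{G}$: by (i), $\omega\mapsto\scal{y(\omega)}{z(\omega)}_{\HW}$ is $\FF$-measurable and agrees $\mae$ with $\omega\mapsto\scal{x(\omega)}{z(\omega)}_{\HW}$, so the latter is $\FF$-measurable by completeness; as $z$ is arbitrary,~\ref{d:1b} yields $x\in\mathfrak{G}$. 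For (iii), fix $y\in\mathfrak{G}$: the map $\omega\mapsto\scal{\xi(\omega)x(\omega)}{y(\omega)}_{\HW}=\xi(\omega)\scal{x(\omega)}{y(\omega)}_{\HW}$ is a product of two $\FF$-measurable functions by (i), hence $\FF$-measurable, and~\ref{d:1b} gives $\xi x\in\mathfrak{G}$. For (iv), fix $y\in\mathfrak{G}$: for $\mae$ $\omega\in\Omega$ one has $\scal{x_n(\omega)}{y(\omega)}_{\HW}\to\scal{x(\omega)}{y(\omega)}_{\HW}$ by definition of weak convergence, each term is $\FF$-measurable by (i), so the $\mae$ pointwise limit is $\FF$-measurable by completeness, and~\ref{d:1b} applies once more.

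For (v), start from a sequence $(e_n)_{n\in\NN}$ in $\mathfrak{G}$ furnished by axiom~\ref{d:1c}, so that $\spc\{e_n(\omega)\}_{n\in\NN}=\HW$ for every $\omega$; in particular, each fiber $\HW$ is separable. Two defects of $(e_n)_{n\in\NN}$ must be repaired: the $e_n$ need not be square-$\mu$-integrable, and one needs density of a \emph{set of values}, not merely of a linear span. For the second, let $\mathcal{D}$ be the countable set of all finite rational linear combinations $\sum_k q_k e_k$ (with $q_k\in\mathbb{Q}$, all but finitely many zero); for every $\omega$, $\menge{v(\omega)}{v\in\mathcal{D}}$ is the $\mathbb{Q}$-linear span of $\{e_n(\omega)\}_{n\in\NN}$, hence dense in $\HW$. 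For the first, use $\sigma$-finiteness to write $\Omega=\bigcup_{m\in\NN}\Omega_m$ with $\Omega_m\in\FF$ and $\mu(\Omega_m)<\pinf$, and, for $v\in\mathcal{D}$ and $(m,\ell)\in\NN^2$, put $A_{v,m,\ell}=\menge{\omega\in\Omega_m}{\norm{v(\omega)}_{\HW}\leq\ell}$, which lies in $\FF$ by~\ref{d:1a} applied to $v\in\mathfrak{G}$. By (iii), $\mathbf{1}_{A_{v,m,\ell}}v\in\mathfrak{G}$, and $\int_\Omega\norm{\mathbf{1}_{A_{v,m,\ell}}(\omega)v(\omega)}_{\HW}^2\mu(d\omega)\leq\ell^2\mu(\Omega_m)<\pinf$. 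Enumerating the countable family $\menge{\mathbf{1}_{A_{v,m,\ell}}v}{v\in\mathcal{D},\ (m,\ell)\in\NN^2}$ as $(u_n)_{n\in\NN}$ then works: the integrability condition holds by construction, and for every $\omega$ and $v\in\mathcal{D}$, choosing $m$ with $\omega\in\Omega_m$ and $\ell\geq\norm{v(\omega)}_{\HW}$ gives $v(\omega)\in\{u_n(\omega)\}_{n\in\NN}$, whence $\overline{\{u_n(\omega)\}_{n\in\NN}}\supseteq\overline{\menge{v(\omega)}{v\in\mathcal{D}}}=\HW$, the reverse inclusion being trivial.

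The main obstacle is (v): one must simultaneously enforce square-integrability of the generators --- via the double truncation by the $\Omega_m$ and by the level sets of $\norm{v(\Cdot)}_{\HW}$, which still recovers the untruncated value at any fixed $\omega$ once $m$ and $\ell$ are large --- and upgrade density of a span to density of a set of values, via rational combinations and separability of the fibers (itself a by-product of~\ref{d:1c}). Packing both tasks into a single countable sequence is the only delicate point; no hard analysis enters, and items (i)--(iv) are routine.
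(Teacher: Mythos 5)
Your proof is correct and follows essentially the same route as the paper's: polarization for (i), axiom [B] plus completeness for (ii)--(iv), and for (v) the same two-step construction (rational linear combinations of the $(e_n)_{n\in\NN}$ to turn span-density into set-density, followed by a double truncation over a $\sigma$-finite exhaustion and over level sets of the norm). The only cosmetic difference is in (iv), where you invoke completeness to make the $\mae$ pointwise limit of the inner products measurable, while the paper first multiplies by an indicator of the co-null set and then appeals to (ii); both are valid.
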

\begin{proof}
\ref{l:1i}:
Since $\mathfrak{G}$ is a vector subspace of
$\prod_{\omega\in\Omega}\HW$, $x+y\in\mathfrak{G}$ and
$x-y\in\mathfrak{G}$. Hence, by property~\ref{d:1a} in
Definition~\ref{d:1}, the functions
$\Omega\to\RR\colon\omega\mapsto
\norm{x(\omega)+y(\omega)}_{\HW}$ and
$\Omega\to\RR\colon\omega\mapsto
\norm{x(\omega)-y(\omega)}_{\HW}$
are $\FF$-measurable. Therefore, the assertion follows from the
polarization identity $(\forall\omega\in\Omega)$
$4\scal{x(\omega)}{y(\omega)}_{\HW}
=\norm{x(\omega)+y(\omega)}_{\HW}^2
-\norm{x(\omega)-y(\omega)}_{\HW}^2$.

\ref{l:1ii}:
Take $z\in\mathfrak{G}$. Then $(\forallmu\omega\in\Omega)$
$\scal{x(\omega)}{z(\omega)}_{\HW}=
\scal{y(\omega)}{z(\omega)}_{\HW}$.
At the same time, since
$y$ and $z$ lie in $\mathfrak{G}$, we deduce from
\ref{l:1i} that the function $\Omega\to\RR\colon\omega\mapsto
\scal{y(\omega)}{z(\omega)}_{\HW}$ is
$\FF$-measurable. Hence, the completeness of
$(\Omega,\FF,\mu)$ implies that the function
$\Omega\to\RR\colon\omega\mapsto
\scal{x(\omega)}{z(\omega)}_{\HW}$ is also
$\FF$-measurable. Consequently, property \ref{d:1b} in
Definition~\ref{d:1} forces $x\in\mathfrak{G}$.

\ref{l:1iii}:
We have $\xi x\in\prod_{\omega\in\Omega}\HW$.
On the other hand, for every $y\in\mathfrak{G}$, it results from
\ref{l:1i} that the function $\omega\mapsto
\scal{\xi(\omega)x(\omega)}{y(\omega)}_{\HW}
=\xi(\omega)\scal{x(\omega)}{y(\omega)}_{\HW}$ is
$\FF$-measurable. Hence, we conclude via property~\ref{d:1b} in
Definition~\ref{d:1} that $\xi x\in\mathfrak{G}$.

\ref{l:1iv}:
Let $\Xi\in\FF$ be such that $\mu(\Xi)=0$ and
$(\forall\omega\in\complement\Xi)$ $x_n(\omega)\weakly x(\omega)$.
Moreover, set
\begin{equation}
\bigl[\;(\forall n\in\NN)\;\;
y_n=1_{\complement\Xi}x_n\;\bigr]\quad\text{and}\quad
y=1_{\complement\Xi}x,
\end{equation}
and let $z\in\mathfrak{G}$. For every $n\in\NN$, it results from
\ref{l:1iii} that $y_n\in\mathfrak{G}$ and, in turn, from
\ref{l:1i} that the function $\Omega\to\RR\colon\omega\mapsto
\scal{y_n(\omega)}{z(\omega)}_{\HW}$
is $\FF$-measurable. Additionally,
\begin{equation}
(\forall\omega\in\Xi)\quad
\lim\scal{y_n(\omega)}{z(\omega)}_{\HW}
=0
=\scal{y(\omega)}{z(\omega)}_{\HW}
\end{equation}
and
\begin{equation}
\bigl(\forall\omega\in\complement\Xi\bigr)\quad
\lim\scal{y_n(\omega)}{z(\omega)}_{\HW}
=\lim\scal{x_n(\omega)}{z(\omega)}_{\HW}
=\scal{x(\omega)}{z(\omega)}_{\HW}
=\scal{y(\omega)}{z(\omega)}_{\HW}.
\end{equation}
Hence, the function
$\Omega\to\RR\colon\omega\mapsto
\scal{y(\omega)}{z(\omega)}_{\HW}$
is $\FF$-measurable as the pointwise limit of a sequence of
$\FF$-measurable functions. Therefore, appealing to
property~\ref{d:1b} in Definition~\ref{d:1}, we deduce that
$y\in\mathfrak{G}$. Consequently, since $x=y$ $\mae$, \ref{l:1ii}
yields $x\in\mathfrak{G}$.

\ref{l:1v}:
Property~\ref{d:1c} in Definition~\ref{d:1} guarantees the
existence of a sequence $(e_n)_{n\in\NN}$ in $\mathfrak{G}$ such
that $(\forall\omega\in\Omega)$
$\spc\set{e_n(\omega)}_{n\in\NN}=\HW$. Now let $(r_n)_{n\in\NN}$
be an enumeration of the set
\begin{equation}
\Menge4{\sum_{k=0}^n\alpha_ke_k}{n\in\NN\,\,\text{and}\,\,
(\alpha_k)_{0\leq k\leq n}\in\mathbb{Q}^{n+1}}.
\end{equation}
Then
\begin{equation}
\label{e:bl4x}
(\forall n\in\NN)\quad r_n\in\mathfrak{G}
\end{equation}
and
\begin{equation}
\label{e:pqht}
(\forall\omega\in\Omega)\quad
\overline{\bigl\{r_n(\omega)\bigr\}_{n\in\NN}}=\HW.
\end{equation}
Since $(\Omega,\FF,\mu)$ is $\sigma$-finite, we obtain an
increasing sequence $(\Omega_k)_{k\in\NN}$ in $\FF$
of finite $\mu$-measure such that
$\bigcup_{k\in\NN}\Omega_k=\Omega$. Set
\begin{equation}
\label{e:w9hr}
(\forall n\in\NN)(\forall m\in\NN)(\forall k\in\NN)\quad
\Xi_{n,m,k}=\menge{\omega\in\Omega_k}{
\norm{r_n(\omega)}_{\HW}\leq m}
\quad\text{and}\quad
s_{n,m,k}=1_{\Xi_{n,m,k}}r_n.
\end{equation}
For every $n\in\NN$, it results from \eqref{e:bl4x}
and property~\ref{d:1a} in Definition~\ref{d:1} that the
function $\Omega\to\RR\colon\omega\mapsto
\norm{r_n(\omega)}_{\HW}$ is
$\FF$-measurable. Therefore,
for every $n\in\NN$, every $m\in\NN$, and every $k\in\NN$,
$\Xi_{n,m,k}\in\FF$ and we thus infer from \ref{l:1iii}
and \eqref{e:bl4x} that $s_{n,m,k}\in\mathfrak{G}$ whereas,
by \eqref{e:w9hr},
\begin{equation}
\int_{\Omega}\norm{s_{n,m,k}(\omega)}_{\HW}^2
\mu(d\omega)\leq\mu(\Xi_{n,m,k})m
\leq\mu(\Omega_k)m
<\pinf.
\end{equation}
Next, take $\omega\in\Omega$, $\mathsf{x}\in\HW$, and
$\varepsilon\in\zeroun$. By \eqref{e:pqht}, there exists
$n\in\NN$ such that $\norm{r_n(\omega)-
\mathsf{x}}_{\HW}\leq\varepsilon$.
In turn, the triangle inequality gives
$\norm{r_n(\omega)}_{\HW}
\leq\varepsilon+\norm{\mathsf{x}}_{\HW}$.
However, since $\bigcup_{k\in\NN}\Omega_k=\Omega$,
there exists $k\in\NN$ such that $\omega\in\Omega_k$.
Therefore, upon choosing $m\in\NN$ such that
$m\geq\varepsilon+\norm{\mathsf{x}}_{\HW}$,
we deduce that $\omega\in\Xi_{n,m,k}$. Thus, combining with
\eqref{e:w9hr} yields
$\norm{s_{n,m,k}(\omega)-\mathsf{x}}_{\HW}
=\norm{r_n(\omega)-\mathsf{x}}_{\HW}
\leq\varepsilon$. 
\end{proof}

\begin{lemma}[\protect{\cite[Proposition~II.{\S}1.5(ii)]{Dixm69}}]
\label{l:2}
Suppose that Assumption~\ref{a:1} is in force
and let $(x_n)_{n\in\NN}$ be a sequence in $\HH$ which converges
strongly to a point $x\in\HH$. Then there exists a strictly
increasing sequence $(k_n)_{n\in\NN}$ in $\NN$ such that
$(\forallmu\omega\in\Omega)$ $x_{k_n}(\omega)\to x(\omega)$.
\end{lemma}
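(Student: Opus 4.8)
The plan is to reduce this to the classical measure-theoretic fact that a sequence converging in an $L^2$-type norm has a subsequence converging pointwise almost everywhere; the adjustments needed are minor and amount to keeping track of representatives of equivalence classes and invoking the measurability built into Definition~\ref{d:1}.

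First, I would recast the hypothesis as an integral estimate. For each $n\in\NN$ I work with fixed representatives in $\GG$ of the classes $x_n$ and $x$ (the conclusion to be proved is insensitive to this choice, since altering any representative changes the mappings $\omega\mapsto x_n(\omega)$ and $\omega\mapsto x(\omega)$ only on a countable union of $\mu$-null sets). Because $\GG$ is a vector subspace of $\mathfrak{G}$, each difference $x_n-x$ lies in $\mathfrak{G}$, so property~\ref{d:1a} in Definition~\ref{d:1} guarantees that $\omega\mapsto\norm{x_n(\omega)-x(\omega)}_{\HW}$ is $\FF$-measurable; moreover, by \eqref{e:y9d2}, $\int_{\Omega}\norm{x_n(\omega)-x(\omega)}_{\HW}^2\mu(d\omega)=\norm{x_n-x}_{\HH}^2\to 0$ as $n\to\pinf$. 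Hence I can extract a strictly increasing sequence $(k_n)_{n\in\NN}$ in $\NN$ with $\norm{x_{k_n}-x}_{\HH}\le 2^{-n}$ for every $n\in\NN$, which makes the series $\sum_{n\in\NN}\int_{\Omega}\norm{x_{k_n}(\omega)-x(\omega)}_{\HW}^2\mu(d\omega)=\sum_{n\in\NN}\norm{x_{k_n}-x}_{\HH}^2$ convergent.

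Next, I would apply the Beppo Levi theorem (monotone convergence applied to partial sums) to the series of nonnegative $\FF$-measurable functions $\omega\mapsto\norm{x_{k_n}(\omega)-x(\omega)}_{\HW}^2$: the function $\omega\mapsto\sum_{n\in\NN}\norm{x_{k_n}(\omega)-x(\omega)}_{\HW}^2$ is then $\FF$-measurable and its integral over $\Omega$ equals the finite sum just obtained. Therefore this function is finite outside some $\Xi\in\FF$ with $\mu(\Xi)=0$, and for every $\omega\in\complement\Xi$ the convergence of the series $\sum_{n\in\NN}\norm{x_{k_n}(\omega)-x(\omega)}_{\HW}^2$ forces $\norm{x_{k_n}(\omega)-x(\omega)}_{\HW}\to 0$, that is, $x_{k_n}(\omega)\to x(\omega)$ in $\HW$. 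Since $\mu(\Xi)=0$, this is precisely the assertion $(\forallmu\omega\in\Omega)$ $x_{k_n}(\omega)\to x(\omega)$.

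I do not anticipate a genuine obstacle: the argument is the standard summable-subsequence device followed by monotone convergence. The two points that require a little care—representative-independence of the statement and measurability of the integrands—are dispatched, respectively, by absorbing the ambiguity into the null set $\Xi$ and by property~\ref{d:1a} in Definition~\ref{d:1} together with $\GG$ being a vector subspace of $\mathfrak{G}$.
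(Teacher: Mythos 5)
Your proof is correct: the reduction to the classical fact that norm convergence in an $L^2$-type space yields an $\mae$ convergent subsequence, via the summable-subsequence device and the Beppo Levi theorem, is exactly the standard argument, and your handling of representatives and of measurability through property~\ref{d:1a} of Definition~\ref{d:1} is sound. The paper itself supplies no proof for Lemma~\ref{l:2}, deferring instead to \cite[Proposition~II.{\S}1.5(ii)]{Dixm69}, and your argument is the expected one behind that reference.
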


\section{Hilbert direct integrals of set-valued operators}
\label{sec:3}

We study the properties of the Hilbert direct integrals
of set-valued operators introduced in Definition~\ref{d:6}.
Let us first point out an important special case of
Definition~\ref{d:6}.

\begin{definition}
\label{d:3}
Suppose that Assumption~\ref{a:1} is in force and,
for every $\omega\in\Omega$, let $\mathsf{C}_{\omega}$ be a subset
of $\HW$. The \emph{Hilbert direct integral of the
sets $(\mathsf{C}_{\omega})_{\omega\in\Omega}$ relative to
$\mathfrak{G}$} is
\begin{equation}
\leftindex^{\mathfrak{G}}{\int}_{\Omega}^{\oplus}
\mathsf{C}_{\omega}\mu(d\omega)=
\menge{x\in\HH}{(\forallmu\omega\in\Omega)\,\,
x(\omega)\in\mathsf{C}_{\omega}}.
\end{equation}
\end{definition}

We first record the following facts, which are direct consequences
of Definitions~\ref{d:6} and \ref{d:3}.

\begin{proposition}
\label{p:1}
Suppose that Assumption~\ref{a:1} is in force and, for every
$\omega\in\Omega$, let $\AW\colon\HW\to 2^{\HW}$
be a set-valued operator. Set
\begin{equation}
A=\leftindex^{\mathfrak{G}}{\int}_{\Omega}^{\oplus}
\AW\mu(d\omega).
\end{equation}
Then the following hold:
\begin{enumerate}
\item
\label{p:1i}
$\dom A=\menge{x\in\HH}{(\exi x^*\in\GG)
(\forallmu\omega\in\Omega)\,\,x^*(\omega)\in\AW(x(\omega))}$.
\item
\label{p:1ii}
$\ran A=\menge{x^*\in\HH}{(\exi x\in\GG)
(\forallmu\omega\in\Omega)\,\,x^*(\omega)\in\AW(x(\omega))}$.
\item
\label{p:1iii}
$\zer A=\leftindex^{\mathfrak{G}}{\Int}_{\Omega}^{\oplus}
\zer\AW\,\mu(d\omega)$.
\item
\label{p:1iv}
$A^{-1}=\leftindex^{\mathfrak{G}}{\Int}_{\Omega}^{\oplus}
\AW^{-1}\mu(d\omega)$.
\item
\label{p:1v}
Suppose that, for every $\omega\in\Omega$, $\AW$ is
monotone. Then $A$ is monotone.
\end{enumerate}
\end{proposition}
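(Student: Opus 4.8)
The plan is to derive all five assertions by unwinding Definitions~\ref{d:6} and \ref{d:3}, keeping in mind throughout that, by Assumption~\ref{a:1}, a point of $\HH$ is an equivalence class of $\mae$ equal mappings in $\GG$, and that every condition we write pointwise in $\omega$ (membership in a set $\AW(\Cdot)$, or the sign of a scalar product) is stable under modifying a mapping on a $\mu$-null set and under intersecting finitely many $\mu$-conull sets. With this in hand, \ref{p:1i}--\ref{p:1iv} are bookkeeping. For \ref{p:1i}, $x\in\dom A$ means $Ax\neq\emp$, i.e.\ by \eqref{e:b5} that there exists $x^*\in\HH$ with $x^*(\omega)\in\AW(x(\omega))$ for $\mae$ $\omega$; choosing a representative of $x^*$ in $\GG$ gives exactly the stated description. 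For \ref{p:1ii}, $\ran A=\bigcup_{x\in\dom A}Ax$, so $x^*\in\ran A$ iff there is $x\in\HH$ with $x^*\in Ax$ (such an $x$ automatically lies in $\dom A$), and again one passes to a representative of $x$ in $\GG$.

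For \ref{p:1iii}, $0\in Ax$ means, applying \eqref{e:b5} with the zero element of $\HH$ (canonically represented by $\omega\mapsto 0\in\HW$), that $0\in\AW(x(\omega))$ for $\mae$ $\omega$, i.e.\ $x(\omega)\in\zer\AW$ for $\mae$ $\omega$; by Definition~\ref{d:3} this says precisely that $x\in\leftindex^{\mathfrak{G}}{\Int}_{\Omega}^{\oplus}\zer\AW\,\mu(d\omega)$. For \ref{p:1iv}, I would compare graphs: for $x^*\in\HH$ and $x\in\HH$ one has $x\in A^{-1}x^*$ iff $x^*\in Ax$ iff $x^*(\omega)\in\AW(x(\omega))$ for $\mae$ $\omega$ iff $x(\omega)\in\AW^{-1}(x^*(\omega))$ for $\mae$ $\omega$, and by Definition~\ref{d:6} the last statement means exactly that $x$ belongs to the value at $x^*$ of $\leftindex^{\mathfrak{G}}{\Int}_{\Omega}^{\oplus}\AW^{-1}\mu(d\omega)$.

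The only assertion with (mild) analytic content is \ref{p:1v}. Here I would take $(x,x^*)$ and $(y,y^*)$ in $\gra A$ and fix a set $\Omega'\in\FF$ with $\mu(\Omega\smallsetminus\Omega')=0$ on which $x^*(\omega)\in\AW(x(\omega))$ and $y^*(\omega)\in\AW(y(\omega))$ hold simultaneously; monotonicity of each $\AW$ then gives $\scal{x(\omega)-y(\omega)}{x^*(\omega)-y^*(\omega)}_{\HW}\geq 0$ for every $\omega\in\Omega'$, hence for $\mae$ $\omega$. Since $x,y,x^*,y^*$ lie in $\GG\subset\mathfrak{G}$ and $\mathfrak{G}$ is a vector space, $x-y$ and $x^*-y^*$ lie in $\mathfrak{G}$, so Lemma~\ref{l:1}\ref{l:1i} makes $\omega\mapsto\scal{x(\omega)-y(\omega)}{x^*(\omega)-y^*(\omega)}_{\HW}$ $\FF$-measurable, and it is $\mu$-integrable by the Cauchy--Schwarz inequality applied pointwise together with the fact that $x-y$ and $x^*-y^*$ are square-integrable (they belong to $\GG$). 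Integrating the pointwise inequality over $\Omega$ and invoking \eqref{e:y9d2} yields $\scal{x-y}{x^*-y^*}_{\HH}\geq 0$, which is the monotonicity of $A$.

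There is no conceptual obstacle here; the only thing demanding attention is rigor in the measure-theoretic housekeeping: being consistent in the identification of $\HH$ with classes of mappings in $\GG$, checking the null-set invariance of the pointwise conditions, and, in \ref{p:1v}, verifying measurability and integrability of the integrand via Lemma~\ref{l:1}\ref{l:1i} and Cauchy--Schwarz before one is allowed to integrate the inequality.
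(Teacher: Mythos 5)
Your proposal is correct and matches the paper's treatment: the paper offers no proof of Proposition~\ref{p:1}, stating only that the assertions are direct consequences of Definitions~\ref{d:6} and \ref{d:3}, and your argument is exactly the careful unwinding of those definitions (with the expected use of \eqref{e:y9d2} and Lemma~\ref{l:1}\ref{l:1i} for item \ref{p:1v}). The measure-theoretic housekeeping you flag — null-set invariance of the pointwise conditions and measurability/integrability of the integrand before integrating the inequality — is handled appropriately.
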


\begin{remark}
\label{r:1}
Regarding Proposition~\ref{p:1}\ref{p:1i}, consider the setting of
Example~\ref{ex:1}\ref{ex:1ii} and suppose that, in addition, 
$(\forall k\in\NN)$ $\mathsf{H}_k=\RR$. For every $k\in\NN$, set
$\mathsf{A}_k\colon\mathsf{H}_k\to\mathsf{H}_k
\colon\mathsf{x}\mapsto k/\sqrt{\alpha_k}$. Then
\begin{equation}
\dom\brk3{\leftindex^{\mathfrak{G}}{\int}_{\Omega}^{\oplus}
\AW\mu(d\omega)}=\emp.
\end{equation}
\end{remark}

The following result examines the interplay between the properties
of the direct integral and those of its factor operators.

\begin{proposition}
\label{p:2}
Suppose that Assumption~\ref{a:1} is in force and, for every
$\omega\in\Omega$, let $\TW\colon\HW\to\HW$ be
sequentially strong-to-weak continuous. Set
\begin{equation}
T=\leftindex^{\mathfrak{G}}{\int}_{\Omega}^{\oplus}
\TW\mu(d\omega)
\end{equation}
and suppose that the following are satisfied:
\begin{enumerate}[label={\normalfont[\Alph*]}]
\item
\label{p:2A}
For every $x\in\GG$, the mapping $\omega\mapsto\TW(x(\omega))$
lies in $\mathfrak{G}$.
\item
\label{p:2B}
There exists $z\in\GG$ such that the mapping
$\omega\mapsto\TW(z(\omega))$ lies in $\GG$.
\end{enumerate}
Then the following hold:
\begin{enumerate}
\item
\label{p:2i}
Let $\beta\in\RP$. Then the following are equivalent:
\begin{enumerate}
\item
\label{p:2ia}
For $\mu$-almost every $\omega\in\Omega$, $\TW$ is
$\beta$-Lipschitzian.
\item
\label{p:2ib}
$\dom T=\HH$ and $T$ is $\beta$-Lipschitzian.
\end{enumerate}
\item
\label{p:2ii}
Let $\tau\in\RPP$. Then the following are equivalent:
\begin{enumerate}
\item
\label{p:2iia}
For $\mu$-almost every $\omega\in\Omega$, $\TW$ is
$\tau$-cocoercive.
\item
\label{p:2iib}
$\dom T=\HH$ and $T$ is $\tau$-cocoercive.
\end{enumerate}
\item
\label{p:2iii}
Let $\alpha\in\zeroun$. Then the following are equivalent:
\begin{enumerate}
\item
\label{p:2iiia}
For $\mu$-almost every $\omega\in\Omega$, $\TW$ is
$\alpha$-averaged.
\item
\label{p:2iiib}
$\dom T=\HH$ and $T$ is $\alpha$-averaged.
\end{enumerate}
\end{enumerate}
\end{proposition}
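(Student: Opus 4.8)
I would establish the three equivalences in parallel. For each case, the relevant property of a single-valued operator $\mathsf{S}$ between Hilbert spaces is equivalent to a statement of the form ``$(\forall\mathsf{x})(\forall\mathsf{y})$ $\mathsf{q}(\mathsf{x},\mathsf{y})\leq 0$,'' where $\mathsf{q}(\mathsf{x},\mathsf{y})$ is, respectively, $\norm{\mathsf{S}\mathsf{x}-\mathsf{S}\mathsf{y}}^2-\beta^2\norm{\mathsf{x}-\mathsf{y}}^2$ ($\beta$-Lipschitzianity), $\tau\norm{\mathsf{S}\mathsf{x}-\mathsf{S}\mathsf{y}}^2-\scal{\mathsf{x}-\mathsf{y}}{\mathsf{S}\mathsf{x}-\mathsf{S}\mathsf{y}}$ ($\tau$-cocoercivity, by \eqref{e:coco}), and $\norm{\mathsf{S}\mathsf{x}-\mathsf{S}\mathsf{y}}^2+2(\alpha-1)\scal{\mathsf{x}-\mathsf{y}}{\mathsf{S}\mathsf{x}-\mathsf{S}\mathsf{y}}+(1-2\alpha)\norm{\mathsf{x}-\mathsf{y}}^2$ ($\alpha$-averagedness, by the standard characterization; see \cite{Livre1}). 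In all three, $\mathsf{q}(\mathsf{x},\mathsf{y})$ is a linear combination of $\norm{\mathsf{x}-\mathsf{y}}^2$, $\scal{\mathsf{x}-\mathsf{y}}{\mathsf{S}\mathsf{x}-\mathsf{S}\mathsf{y}}$, and $\norm{\mathsf{S}\mathsf{x}-\mathsf{S}\mathsf{y}}^2$ in which the coefficient of $\norm{\mathsf{S}\mathsf{x}-\mathsf{S}\mathsf{y}}^2$ is strictly positive. Write $\mathsf{q}_\omega$ for the expression attached to $\mathsf{S}=\TW$ on $\HW$ and $q$ for the one attached to $\mathsf{S}=T$ on $\HH$. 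Since $q$ is linear in the three Gram quantities and, by \eqref{e:y9d2}, each of these equals---provided $\dom T=\HH$---the $\mu$-integral of its pointwise counterpart, we obtain $q(x,y)=\int_\Omega\mathsf{q}_\omega\brk!{x(\omega),y(\omega)}\mu(d\omega)$ for all $x,y\in\HH$; here the integrand is $\FF$-measurable by \ref{p:2A} and Lemma~\ref{l:1}\ref{l:1i}, and $\mu$-integrable by the Cauchy--Schwarz inequality together with square integrability.

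\textbf{From the factors to the integral.}
To prove \ref{p:2ia}$\Rightarrow$\ref{p:2ib} and its analogues, I would first show $\dom T=\HH$. In the Lipschitz case, let $x\in\GG$ and let $z$ be as in \ref{p:2B}; then $\norm{\TW(x(\omega))}_{\HW}\leq\beta\norm{x(\omega)-z(\omega)}_{\HW}+\norm{\TW(z(\omega))}_{\HW}$ for $\mu$-almost every $\omega$, so $\omega\mapsto\TW(x(\omega))$ is square $\mu$-integrable; being in $\mathfrak{G}$ by \ref{p:2A}, it lies in $\GG$, whence $x\in\dom T$. In the cocoercive and averaged cases, the hypothesis forces $\TW$ to be, for $\mu$-almost every $\omega$, $\tau^{-1}$-Lipschitzian, respectively nonexpansive, so $\dom T=\HH$ by the already established implication \ref{p:2ia}$\Rightarrow$\ref{p:2ib} applied with $\beta=\tau^{-1}$, respectively $\beta=1$. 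Once $\dom T=\HH$, the inequality $\mathsf{q}_\omega(x(\omega),y(\omega))\leq 0$ holds $\mu$-a.e.\ for all $x,y\in\HH$, and integrating gives $q(x,y)\leq 0$; by the reformulation this is precisely the asserted property of $T$.

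\textbf{From the integral to the factors.}
The substance lies in \ref{p:2ib}$\Rightarrow$\ref{p:2ia} and its analogues, where $\dom T=\HH$ is assumed. Fix a sequence $(u_n)_{n\in\NN}$ as in Lemma~\ref{l:1}\ref{l:1v}; it lies in $\GG$, hence defines a sequence in $\HH$, and $\set{u_n(\omega)}_{n\in\NN}$ is dense in $\HW$ for every $\omega\in\Omega$. The key step is \emph{localization}. Fix $m,n\in\NN$ and $E\in\FF$ with $\mu(E)<\pinf$. By Lemma~\ref{l:1}\ref{l:1iii}, $1_Eu_m+1_{\complement E}u_n$ belongs to $\GG$, hence to $\HH$, and for $\mu$-almost every $\omega$ the value of $T(1_Eu_m+1_{\complement E}u_n)$ at $\omega$ equals $\TW(u_m(\omega))$ if $\omega\in E$ and $\TW(u_n(\omega))$ if $\omega\notin E$. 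Applying the property of $T$ to the pair $(1_Eu_m+1_{\complement E}u_n,u_n)$ and using linearity of $q$ and of the integral turns the resulting inequality into $\int_E\mathsf{q}_\omega\brk!{u_m(\omega),u_n(\omega)}\mu(d\omega)\leq 0$. Since $u_m,u_n\in\dom T$, the integrand is $\mu$-integrable, and as $E$ ranges over all sets of finite measure, $\sigma$-finiteness of $\mu$ forces $\mathsf{q}_\omega(u_m(\omega),u_n(\omega))\leq 0$ for $\mu$-almost every $\omega$. Intersecting these conull sets over all $(m,n)\in\NN^2$ yields a conull set $\Omega_0\in\FF$ on which $\mathsf{q}_\omega(u_m(\omega),u_n(\omega))\leq 0$ for all $m,n\in\NN$.

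\textbf{Density upgrade, and the main obstacle.}
It remains to pass, for each $\omega\in\Omega_0$, from the dense set $\set{u_n(\omega)}_{n\in\NN}$ to all of $\HW$. Given $\mathsf{x},\mathsf{y}\in\HW$, pick subsequences with $u_{m_k}(\omega)\to\mathsf{x}$ and $u_{n_k}(\omega)\to\mathsf{y}$. Sequential strong-to-weak continuity of $\TW$ gives $\TW(u_{m_k}(\omega))\weakly\TW\mathsf{x}$ and $\TW(u_{n_k}(\omega))\weakly\TW\mathsf{y}$, hence $\TW(u_{m_k}(\omega))-\TW(u_{n_k}(\omega))\weakly\TW\mathsf{x}-\TW\mathsf{y}$. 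The terms $\norm{u_{m_k}(\omega)-u_{n_k}(\omega)}^2$ and $\scal{u_{m_k}(\omega)-u_{n_k}(\omega)}{\TW(u_{m_k}(\omega))-\TW(u_{n_k}(\omega))}$ converge to $\norm{\mathsf{x}-\mathsf{y}}^2$ and $\scal{\mathsf{x}-\mathsf{y}}{\TW\mathsf{x}-\TW\mathsf{y}}$, respectively (the latter by pairing a strongly convergent sequence with a bounded weakly convergent one), while $\norm{\TW\mathsf{x}-\TW\mathsf{y}}^2\leq\liminf_k\norm{\TW(u_{m_k}(\omega))-\TW(u_{n_k}(\omega))}^2$ by weak lower semicontinuity of the norm. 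Because the coefficient of $\norm{\TW\mathsf{x}-\TW\mathsf{y}}^2$ in $\mathsf{q}_\omega$ is positive, taking $\liminf_k$ in $\mathsf{q}_\omega(u_{m_k}(\omega),u_{n_k}(\omega))\leq 0$ gives $\mathsf{q}_\omega(\mathsf{x},\mathsf{y})\leq 0$, i.e., $\TW$ enjoys the property in question. I expect this to be the main obstacle: one scalar inequality about $T$ must be converted first into a pointwise-in-$\omega$ inequality---via the localization device, which relies on the closure properties of $\mathfrak{G}$ recorded in Lemma~\ref{l:1}---and then into one valid for all $\mathsf{x},\mathsf{y}\in\HW$; the last step cannot proceed by naive limits, since $\TW$ is only strong-to-weak continuous, which is exactly why weak lower semicontinuity of the norm, together with the sign of the coefficient of $\norm{\TW\mathsf{x}-\TW\mathsf{y}}^2$, is indispensable.
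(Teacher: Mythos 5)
Your proof is correct and follows essentially the same route as the paper: establish $\dom T=\HH$ from \ref{p:2A}--\ref{p:2B} together with the pointwise Lipschitz bound, integrate the pointwise inequalities for the forward implications, and for the converse localize with indicator truncations over finite-measure sets, invoke $\sigma$-finiteness to get $\mu$-a.e.\ inequalities on the dense sequence from Lemma~\ref{l:1}\ref{l:1v}, and upgrade by density using sequential strong-to-weak continuity and weak lower semicontinuity of the norm. The only difference is presentational: you treat all three properties uniformly through the quadratic form $\mathsf{q}$ with positive coefficient on $\norm{\mathsf{S}\mathsf{x}-\mathsf{S}\mathsf{y}}^2$, which is exactly what the paper's ``argue as in \ref{p:2i}'' for \ref{p:2ii}--\ref{p:2iii} tacitly relies on.
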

\begin{proof}
Observe that $T$ is at most single-valued.
On the other hand, Lemma~\ref{l:1}\ref{l:1v} states that
there exists a sequence $(u_n)_{n\in\NN}$ in $\GG$ such that
\begin{equation}
\label{e:a5ln}
(\forall\omega\in\Omega)\quad
\overline{\set!{u_n(\omega)}_{n\in\NN}}=\HW.
\end{equation}

\ref{p:2ia}$\Rightarrow$\ref{p:2ib}:
Let $\Xi\in\FF$ be such that $\mu(\Xi)=0$ and, for every
$\omega\in\complement\Xi$, $\TW$ is $\beta$-Lipschitzian. Then
\begin{equation}
\label{e:6ces}
(\forall x\in\GG)(\forall y\in\GG)
\brk!{\forall\omega\in\complement\Xi}\quad
\norm!{\TW\brk!{x(\omega)}-\TW\brk!{y(\omega)}}_{\HW}
\leq\beta\norm{x(\omega)-y(\omega)}_{\HW}.
\end{equation}
In turn, since $\mathfrak{G}$ is a vector subspace of
$\prod_{\omega\in\Omega}\HW$, we infer from \ref{p:2A} and
\eqref{e:4a5t} that, for every $x\in\GG$ and every $y\in\GG$, the
mapping $\omega\mapsto\TW(x(\omega))-\TW(y(\omega))$ lies in
$\GG$. Thus, \ref{p:2B} implies that, for every $x\in\GG$, the
mapping $\omega\mapsto\TW(x(\omega))$ lies in $\GG$ as the sum of
two mappings in $\GG$, namely
$\omega\mapsto\TW(x(\omega))-\TW(z(\omega))$ and
$\omega\mapsto\TW(z(\omega))$. Therefore $\dom T=\HH$.
Additionally, it results from \eqref{e:6ces} and \eqref{e:y9d2}
that $T$ is $\beta$-Lipschitzian.

\ref{p:2ib}$\Rightarrow$\ref{p:2ia}:
Fix temporarily $n\in\NN$ and $m\in\NN$.
For every $\Xi\in\FF$ such that $\mu(\Xi)<\pinf$,
since $1_{\Xi}u_n\in\GG$ and $1_{\Xi}u_m\in\GG$ thanks to
Lemma~\ref{l:1}\ref{l:1iii}, we derive from \eqref{e:y9d2} that
\begin{align}
\int_{\Xi}\norm!{
\TW\brk!{u_n(\omega)}-\TW\brk!{u_m(\omega)}}_{\HW}^2
\mu(d\omega)
&=\norm{T\brk{1_{\Xi}u_n}-T\brk{1_{\Xi}u_m}}_{\HH}^2
\nonumber\\
&\leq\beta^2\norm{1_{\Xi}u_n-1_{\Xi}u_m}_{\HH}^2
\nonumber\\
&=\int_{\Xi}\beta^2
\norm{u_n(\omega)-u_m(\omega)}_{\HW}^2\mu(d\omega).
\end{align}
Hence, since $(\Omega,\FF,\mu)$ is $\sigma$-finite,
there exists $\Xi_{n,m}\in\FF$ such that
\begin{equation}
\label{e:xqgj}
\mu\brk!{\Xi_{n,m}}=0\quad\text{and}\quad
\brk!{\forall\omega\in\complement\Xi_{n,m}}\;\;
\norm!{\TW\brk!{u_n(\omega)}-\TW\brk!{u_m(\omega)}}_{\HW}
\leq\beta\norm{u_n(\omega)-u_m(\omega)}_{\HW}.
\end{equation}
Now set $\Xi=\bigcup_{n\in\NN,m\in\NN}\Xi_{n,m}$, let
$\omega\in\complement\Xi$, let $\mathsf{x}\in\HW$,
and let $\mathsf{y}\in\HW$. Then, $\Xi\in\FF$ with $\mu(\Xi)=0$
and, in view of \eqref{e:a5ln}, there exist sequences
$(k_n)_{n\in\NN}$ and
$(l_n)_{n\in\NN}$ in $\NN$ such that
$u_{k_n}(\omega)\to\mathsf{x}$ and $u_{l_n}(\omega)\to\mathsf{y}$.
At the same time, by \eqref{e:xqgj},
\begin{equation}
(\forall n\in\NN)\quad\norm!{
\TW\brk!{u_{k_n}(\omega)}-\TW\brk!{u_{l_n}(\omega)}}_{\HW}
\leq\beta\norm{u_{k_n}(\omega)-u_{l_n}(\omega)}_{\HW}.
\end{equation}
Thus, since $\norm{\Cdot}_{\HW}$ is weakly lower semicontinuous,
letting $n\to\pinf$ and invoking the sequential
strong-to-weak continuity of
$\TW$, we get $\norm{\TW\mathsf{x}-\TW\mathsf{y}}_{\HW}
\leq\beta\norm{\mathsf{x}-\mathsf{y}}_{\HW}$.

\ref{p:2ii} and \ref{p:2iii}:
Argue as in \ref{p:2i}.
\end{proof}

\begin{proposition}
\label{p:16}
Suppose that Assumption~\ref{a:1} is in force and, for every
$\omega\in\Omega$, let $\AW\colon\HW\to 2^{\HW}$
be a set-valued operator. Set
\begin{equation}
A=\leftindex^{\mathfrak{G}}{\int}_{\Omega}^{\oplus}
\AW\mu(d\omega)
\end{equation}
and let $\gamma\in\RPP$. Then
\begin{equation}
J_{\gamma A}=\leftindex^{\mathfrak{G}}{\Int}_{\Omega}^{\oplus}
J_{\gamma\AW}\mu(d\omega)
\quad\text{and}\quad
\moyo{A}{\gamma}=
\leftindex^{\mathfrak{G}}{\Int}_{\Omega}^{\oplus}
\moyo{\AW}{\gamma}\,\mu(d\omega).
\end{equation}
\end{proposition}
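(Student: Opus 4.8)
The plan is to derive both identities from Definition~\ref{d:6} and the inversion formula of Proposition~\ref{p:1}\ref{p:1iv}, the only genuine computation being a routine \emph{fibrewise} bookkeeping step.

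That step is the following elementary fact: let $(\BW)_{\omega\in\Omega}$ be any family of operators $\BW\colon\HW\to 2^{\HW}$, set $B=\leftindex^{\mathfrak{G}}{\Int}_{\Omega}^{\oplus}\BW\mu(d\omega)$, and let $\lambda\in\RPP$. Then
\[
\lambda\Id_{\HH}+B=\leftindex^{\mathfrak{G}}{\Int}_{\Omega}^{\oplus}
(\lambda\Id_{\HW}+\BW)\mu(d\omega)
\quad\text{and}\quad
\Id_{\HH}+\lambda B=\leftindex^{\mathfrak{G}}{\Int}_{\Omega}^{\oplus}
(\Id_{\HW}+\lambda\BW)\mu(d\omega).
\]
To see the first equality (the second is identical), fix $x\in\HH$ and $x^{*}\in\HH$; since $\HH$ is a vector space, $x^{*}\in(\lambda\Id_{\HH}+B)x$ if and only if $x^{*}-\lambda x\in Bx$, which by Definition~\ref{d:6} means $(\forallmu\omega\in\Omega)$ $x^{*}(\omega)-\lambda x(\omega)\in\BW(x(\omega))$, i.e. $(\forallmu\omega\in\Omega)$ $x^{*}(\omega)\in(\lambda\Id_{\HW}+\BW)(x(\omega))$, i.e. $x^{*}\in(\leftindex^{\mathfrak{G}}{\Int}_{\Omega}^{\oplus}(\lambda\Id_{\HW}+\BW)\mu(d\omega))x$. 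The only thing to watch is that $x^{*}\mapsto x^{*}-\lambda x$ is a bijection of $\HH$ onto $\HH$ acting pointwise on representatives, so that membership in $\HH$ and the $\forallmu$ pointwise conditions are preserved; there is no deeper obstacle here.

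With this in hand, for the resolvent formula I would apply the fact with $\lambda=\gamma$ and $\BW=\AW$ to obtain $\Id_{\HH}+\gamma A=\leftindex^{\mathfrak{G}}{\Int}_{\Omega}^{\oplus}(\Id_{\HW}+\gamma\AW)\mu(d\omega)$, and then invert using Proposition~\ref{p:1}\ref{p:1iv}:
\[
J_{\gamma A}=(\Id_{\HH}+\gamma A)^{-1}
=\leftindex^{\mathfrak{G}}{\Int}_{\Omega}^{\oplus}
(\Id_{\HW}+\gamma\AW)^{-1}\mu(d\omega)
=\leftindex^{\mathfrak{G}}{\Int}_{\Omega}^{\oplus}J_{\gamma\AW}\mu(d\omega).
\]
For the Yosida approximation I would use the identity $\moyo{A}{\gamma}=(\gamma\Id_{\HH}+A^{-1})^{-1}$ recorded in Section~\ref{sec:2}. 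By Proposition~\ref{p:1}\ref{p:1iv}, $A^{-1}=\leftindex^{\mathfrak{G}}{\Int}_{\Omega}^{\oplus}\AW^{-1}\mu(d\omega)$; applying the fact above with $\lambda=\gamma$ and $\BW=\AW^{-1}$ gives $\gamma\Id_{\HH}+A^{-1}=\leftindex^{\mathfrak{G}}{\Int}_{\Omega}^{\oplus}(\gamma\Id_{\HW}+\AW^{-1})\mu(d\omega)$; and inverting once more with Proposition~\ref{p:1}\ref{p:1iv} yields
\[
\moyo{A}{\gamma}=(\gamma\Id_{\HH}+A^{-1})^{-1}
=\leftindex^{\mathfrak{G}}{\Int}_{\Omega}^{\oplus}
(\gamma\Id_{\HW}+\AW^{-1})^{-1}\mu(d\omega)
=\leftindex^{\mathfrak{G}}{\Int}_{\Omega}^{\oplus}\moyo{\AW}{\gamma}\,\mu(d\omega).
\]
One could equally well deduce the Yosida formula from $\moyo{A}{\gamma}=(\Id_{\HH}-J_{\gamma A})/\gamma$ and the resolvent identity just established. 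Since every step is a definitional unfolding or an appeal to Proposition~\ref{p:1}\ref{p:1iv}, I anticipate no substantive difficulty beyond the equivalence-class bookkeeping noted above.
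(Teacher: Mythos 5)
Your proposal is correct and amounts to essentially the same argument as the paper's: the paper verifies both identities fibrewise via the characterizations $p\in J_{\gamma A}x\Leftrightarrow\gamma^{-1}(x-p)\in Ap$ and $p\in\moyo{A}{\gamma}x\Leftrightarrow p\in A(x-\gamma p)$, which is exactly the composite of your translation lemma with the inversion formula of Proposition~3.2(iv). Your packaging into a reusable ``$\lambda\Id+B$ commutes with the direct integral'' step plus two applications of the inversion formula is a harmless reorganization of the same definitional unfolding, and the equivalence-class bookkeeping you flag is indeed the only point requiring care.
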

\begin{proof}
Set $T=\leftindex^{\mathfrak{G}}{\int}_{\Omega}^{\oplus}
J_{\gamma\AW}\mu(d\omega)$. We derive from Definition~\ref{d:6}
and \cite[Proposition~23.2(ii)]{Livre1} that
\begin{align}
(\forall x\in\HH)\quad
Tx
&=\menge{p\in\HH}{(\forallmu\omega\in\Omega)\,\,
p(\omega)\in J_{\gamma\AW}\brk!{x(\omega)}}
\nonumber\\
&=\menge{p\in\HH}{(\forallmu\omega\in\Omega)\,\,
\gamma^{-1}\brk!{x(\omega)-p(\omega)}\in\AW\brk!{p(\omega)}}
\nonumber\\
&=\menge{p\in\HH}{\gamma^{-1}(x-p)\in Ap}
\nonumber\\
&=J_{\gamma A}x.
\end{align}
Likewise, upon setting
$R=\leftindex^{\mathfrak{G}}{\int}_{\Omega}^{\oplus}
\moyo{\AW}{\gamma}\,\mu(d\omega)$,
we deduce from
Definition~\ref{d:6} and
\cite[Proposition~23.2(iii)]{Livre1}
that
\begin{align}
(\forall x\in\HH)\quad
Rx
&=\menge{p\in\HH}{(\forallmu\omega\in\Omega)\,\,
p(\omega)\in\moyo{\AW}{\gamma}\brk!{x(\omega)}}
\nonumber\\
&=\menge{p\in\HH}{(\forallmu\omega\in\Omega)\,\,
p(\omega)\in\AW\brk!{x(\omega)-\gamma p(\omega)}}
\nonumber\\
&=\menge{p\in\HH}{p\in A(x-\gamma p)}
\nonumber\\
&=\moyo{A}{\gamma}x,
\end{align}
which completes the proof.
\end{proof}

\begin{assumption}
\label{a:2}
Assumption~\ref{a:1} and the following are in force:
\begin{enumerate}[label={\normalfont[\Alph*]}]
\item
\label{a:2a}
For every $\omega\in\Omega$, $\AW\colon\HW\to 2^{\HW}$
is maximally monotone.
\item
\label{a:2b}
For every $x\in\GG$, the mapping $\omega\mapsto
J_{\AW}(x(\omega))$ lies in $\mathfrak{G}$. 
\item
\label{a:2c}
$\dom\leftindex^{\mathfrak{G}}{\int}_{\Omega}^{\oplus}
\AW\mu(d\omega)\neq\emp$.
\end{enumerate}
\end{assumption}

\begin{proposition}
\label{p:6}
Suppose that Assumption~\ref{a:2} is in force.
Then the following hold:
\begin{enumerate}
\item
For every $\omega\in\Omega$, $\AW^{-1}\colon\HW\to 2^{\HW}$
is maximally monotone.
\item
For every $x\in\GG$, the mapping $\omega\mapsto
J_{\AW^{-1}}(x(\omega))$ lies in $\mathfrak{G}$. 
\item
$\dom\leftindex^{\mathfrak{G}}{\int}_{\Omega}^{\oplus}
\AW^{-1}\mu(d\omega)\neq\emp$.
\end{enumerate}
\end{proposition}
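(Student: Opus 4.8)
The plan is to verify that the three conditions making up Assumption~\ref{a:2} are preserved when the family $(\AW)_{\omega\in\Omega}$ is replaced by the family of inverses $(\AW^{-1})_{\omega\in\Omega}$; I would treat the three items in turn. Throughout, set $A=\leftindex^{\mathfrak{G}}{\int}_{\Omega}^{\oplus}\AW\mu(d\omega)$, so that Proposition~\ref{p:1}\ref{p:1iv} identifies $\leftindex^{\mathfrak{G}}{\int}_{\Omega}^{\oplus}\AW^{-1}\mu(d\omega)$ with $A^{-1}$.

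Item~(i) is the classical fact that transposing the two coordinates of a graph preserves both monotonicity and maximality, so that the inverse of a maximally monotone operator is maximally monotone \cite{Livre1}; in particular each $J_{\AW^{-1}}$ is then single-valued with $\dom J_{\AW^{-1}}=\HW$, which is what makes item~(ii) meaningful. For item~(ii), the key tool is the resolvent identity $J_{\mathsf{B}}+J_{\mathsf{B}^{-1}}=\Id_{\HW}$, valid for every maximally monotone operator $\mathsf{B}\colon\HW\to 2^{\HW}$ (see \cite{Livre1}). Applying it with $\mathsf{B}=\AW$ gives $J_{\AW^{-1}}(x(\omega))=x(\omega)-J_{\AW}(x(\omega))$ for every $x\in\GG$ and every $\omega\in\Omega$. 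Now $\GG\subset\mathfrak{G}$, the set $\mathfrak{G}$ is a vector subspace of $\prod_{\omega\in\Omega}\HW$, and $\omega\mapsto J_{\AW}(x(\omega))$ lies in $\mathfrak{G}$ by Assumption~\ref{a:2}\ref{a:2b}; hence the mapping $\omega\mapsto x(\omega)-J_{\AW}(x(\omega))=J_{\AW^{-1}}(x(\omega))$ lies in $\mathfrak{G}$, which is the assertion.

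For item~(iii), I would use that $\dom A^{-1}=\ran A$ by the very definition of the inverse, so it is enough to produce one point of $\ran A$. Assumption~\ref{a:2}\ref{a:2c} supplies some $x\in\dom A$, i.e.\ $Ax\neq\emp$; picking $x^*\in Ax$, we get $x^*\in\ran A=\dom A^{-1}=\dom\leftindex^{\mathfrak{G}}{\int}_{\Omega}^{\oplus}\AW^{-1}\mu(d\omega)$, which is therefore nonempty. I do not expect any step to be a real obstacle here: the only substantive ingredient is the resolvent identity $J_{\mathsf{B}}+J_{\mathsf{B}^{-1}}=\Id$, which reduces item~(ii) to Assumption~\ref{a:2}\ref{a:2b} together with the stability of $\mathfrak{G}$ under the vector space operations, while items~(i) and~(iii) follow at once from standard monotone operator theory and from Proposition~\ref{p:1}\ref{p:1iv}.
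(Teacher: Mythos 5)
Your proposal is correct and follows essentially the same route as the paper: maximal monotonicity of the inverses is the standard graph-transposition fact, item (ii) rests on the resolvent identity $J_{\AW^{-1}}=\Id_{\HW}-J_{\AW}$ combined with Assumption~\ref{a:2}\ref{a:2b} and the vector space structure of $\mathfrak{G}$, and item (iii) follows from $\dom A^{-1}=\ran A$ together with Proposition~\ref{p:1}\ref{p:1iv} and Assumption~\ref{a:2}\ref{a:2c}. No gaps.
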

\begin{proof}
We infer from Assumption~\ref{a:2}\ref{a:2a} and
\cite[Propositions~20.22 and 23.20]{Livre1} that,
for every $\omega\in\Omega$, $\AW^{-1}$ is
maximally monotone and $J_{\AW^{-1}}=\Id_{\HW}-J_{\AW}$.
In turn, for every $x\in\GG$, since $\mathfrak{G}$ is a
vector subspace of $\prod_{\omega\in\Omega}\HW$,
it follows from Assumption~\ref{a:2}\ref{a:2b}
that the mapping $\omega\mapsto J_{\AW^{-1}}(x(\omega))$ lies in
$\mathfrak{G}$ as the difference of the mappings $x$ and
$\omega\mapsto J_{\AW}(x(\omega))$. Finally,
Proposition~\ref{p:1}\ref{p:1iv} and
Assumption~\ref{a:2}\ref{a:2c} yield
$\dom\leftindex^{\mathfrak{G}}{\int}_{\Omega}^{\oplus}
\AW^{-1}\mu(d\omega)=
\ran\leftindex^{\mathfrak{G}}{\int}_{\Omega}^{\oplus}
\AW\mu(d\omega)
\neq\emp$.
\end{proof}

The main result of this section is the following theorem, which
establishes the basic properties of Hilbert direct integrals of
maximally monotone operators. Special cases of items \ref{t:1i}
and \ref{t:1ii} corresponding to scenarios described in
Example~\ref{ex:1} can be found in
\cite{Atto79,Livre1,Brez73,Ibap21,Penn03}.

\begin{theorem}
\label{t:1}
Suppose that Assumption~\ref{a:2} is in force and set
\begin{equation}
\label{e:yrzn}
A=\leftindex^{\mathfrak{G}}{\int}_{\Omega}^{\oplus}
\AW\mu(d\omega).
\end{equation}
Then the following hold:
\begin{enumerate}
\item
\label{t:1i}
$A$ is maximally monotone.
\item
\label{t:1ii}
Let $\gamma\in\RPP$ and $x\in\GG$. Then the following are
satisfied:
\begin{enumerate}
\item
\label{t:1iia}
The mapping $\omega\mapsto J_{\gamma\AW}\brk{x(\omega)}$ lies in
$\GG$ and $J_{\gamma A}=
\leftindex^{\mathfrak{G}}{\Int}_{\Omega}^{\oplus}
J_{\gamma\AW}\mu(d\omega)$.
\item
\label{t:1iib}
The mapping $\omega\mapsto
\moyo{\AW}{\gamma}\brk{x(\omega)}$ lies in $\GG$
and $\moyo{A}{\gamma}=
\leftindex^{\mathfrak{G}}{\Int}_{\Omega}^{\oplus}
\moyo{\AW}{\gamma}\mu(d\omega)$.
\end{enumerate}
\item
\label{t:1iii}
$\cdom A=\leftindex^{\mathfrak{G}}{\Int}_{\Omega}^{\oplus}
\cdom\AW\,\mu(d\omega)
=\overline{\leftindex^{\mathfrak{G}}{\Int}_{\Omega}^{\oplus}
\dom\AW\,\mu(d\omega)}$.
\item
\label{t:1iv}
$\cran A=\leftindex^{\mathfrak{G}}{\Int}_{\Omega}^{\oplus}
\cran\AW\,\mu(d\omega)
=\overline{\leftindex^{\mathfrak{G}}{\Int}_{\Omega}^{\oplus}
\ran\AW\,\mu(d\omega)}$.
\item
\label{t:1v}
Let $x\in\GG$ be such that $(\forall\omega\in\Omega)$
$x(\omega)\in\dom\AW$. Then the following are satisfied:
\begin{enumerate}
\item
\label{t:1va}
The mapping $\omega\mapsto \moyo{\AW}{0}(x(\omega))$ lies in
$\mathfrak{G}$.
\item
\label{t:1vb}
Suppose that $x\in\dom A$. Then the mapping
$\omega\mapsto\moyo{\AW}{0}\brk{x(\omega)}$
lies in $\GG$ and $\moyo{A}{0}x=
\leftindex^{\mathfrak{G}}{\Int}_{\Omega}^{\oplus}
\moyo{\AW}{0}\brk!{x(\omega)}\mu(d\omega)$.
\end{enumerate}
\end{enumerate}
\end{theorem}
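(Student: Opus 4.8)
The plan is to establish the five items in the stated order, bootstrapping everything from~\ref{t:1i} and the resolvent/Yosida formulas of Proposition~\ref{p:16}. For~\ref{t:1i}, recall that $A$ is monotone by Proposition~\ref{p:1}\ref{p:1v}, so by Minty's theorem it suffices to check that $\ran(\Id_{\HH}+A)=\HH$. Given a point of $\HH$, fix a representative $x\in\GG$ and set $p\colon\omega\mapsto J_{\AW}(x(\omega))$; this is everywhere single-valued since each $\AW$ is maximally monotone (Assumption~\ref{a:2}\ref{a:2a}), and $p\in\mathfrak{G}$ by Assumption~\ref{a:2}\ref{a:2b}. To obtain $p\in\GG$, use Assumption~\ref{a:2}\ref{a:2c}: there exist $z\in\dom A$ and, by Proposition~\ref{p:1}\ref{p:1i}, some $z^{*}\in\GG$ with $z^{*}(\omega)\in\AW(z(\omega))$ $\mae$, whence $z(\omega)=J_{\AW}\brk!{z(\omega)+z^{*}(\omega)}$ $\mae$; since $z+z^{*}\in\GG$ and $J_{\AW}$ is nonexpansive, $\norm{p(\omega)}_{\HW}\leq\norm{x(\omega)-z(\omega)-z^{*}(\omega)}_{\HW}+\norm{z(\omega)}_{\HW}$ $\mae$, a $\mu$-square-integrable bound, so $p\in\GG$ by~\eqref{e:4a5t}. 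Finally $x-p\in\GG$ and $x(\omega)-p(\omega)\in\AW(p(\omega))$ $\mae$, so $x-p\in Ap$ by Definition~\ref{d:6}, i.e.\ $x\in\ran(\Id_{\HH}+A)$.

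Once~\ref{t:1i} is proved, $A$ is maximally monotone, so $J_{\gamma A}$ and $\moyo{A}{\gamma}$ are single-valued with domain $\HH$. The operator identities in~\ref{t:1iia} and~\ref{t:1iib} are Proposition~\ref{p:16}; moreover, for $x\in\GG$, Proposition~\ref{p:16} exhibits $J_{\gamma A}x$ as the (singleton) set of $q\in\HH$ with $q(\omega)\in J_{\gamma\AW}(x(\omega))$ $\mae$, and single-valuedness of $J_{\gamma\AW}$ then gives $q(\omega)=J_{\gamma\AW}(x(\omega))$ $\mae$; hence $\omega\mapsto J_{\gamma\AW}(x(\omega))$ lies in $\GG$ (Lemma~\ref{l:1}\ref{l:1ii} for measurability, the integral condition in~\eqref{e:4a5t} being unaffected), and so does $\omega\mapsto\moyo{\AW}{\gamma}(x(\omega))=\gamma^{-1}\brk!{x(\omega)-J_{\gamma\AW}(x(\omega))}$; this settles~\ref{t:1ii}. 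For~\ref{t:1iii}, the crux is $\cdom A=\leftindex^{\mathfrak{G}}{\Int}_{\Omega}^{\oplus}\cdom\AW\,\mu(d\omega)$. If $x(\omega)\in\cdom\AW$ $\mae$, then $J_{\gamma\AW}(x(\omega))\to\proj_{\cdom\AW}(x(\omega))=x(\omega)$ as $\gamma\downarrow0$ for $\mu$-a.e.\ $\omega$; arguing as in~\ref{t:1i}, now also invoking $\norm{J_{\gamma\AW}(z(\omega))-z(\omega)}_{\HW}\leq\gamma\norm{z^{*}(\omega)}_{\HW}$, one majorizes $\norm{J_{\gamma\AW}(x(\omega))}_{\HW}$ by a single $\mu$-square-integrable function for all $\gamma\in\zeroun$, so dominated convergence gives $J_{\gamma A}x\to x$ in $\HH$; as $J_{\gamma A}x\in\dom A$ by~\ref{t:1ii} and Proposition~\ref{p:1}\ref{p:1i}, we obtain $x\in\cdom A$. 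Conversely, if $x\in\cdom A$, maximal monotonicity of $A$ gives $J_{\gamma A}x\to x$ in $\HH$ as $\gamma\downarrow0$, so applying Lemma~\ref{l:2} to $(J_{(1/n)A}x)_{n\in\NN}$ yields a subsequence converging to $x$ $\mae$ pointwise; since that subsequence also converges to $\omega\mapsto\proj_{\cdom\AW}(x(\omega))$ $\mae$, we get $x(\omega)\in\cdom\AW$ $\mae$. The remaining equality in~\ref{t:1iii} follows by squeezing $\dom A\subseteq\leftindex^{\mathfrak{G}}{\Int}_{\Omega}^{\oplus}\dom\AW\,\mu(d\omega)\subseteq\cdom A$ (Proposition~\ref{p:1}\ref{p:1i}) and taking closures.

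Item~\ref{t:1iv} is~\ref{t:1iii} applied to $A^{-1}=\leftindex^{\mathfrak{G}}{\Int}_{\Omega}^{\oplus}\AW^{-1}\mu(d\omega)$ (Proposition~\ref{p:1}\ref{p:1iv}), which is licit because Assumption~\ref{a:2} transfers to the inverses by Proposition~\ref{p:6}; one then substitutes $\dom A^{-1}=\ran A$, $\dom\AW^{-1}=\ran\AW$, $\cdom A^{-1}=\cran A$, and $\cdom\AW^{-1}=\cran\AW$. For~\ref{t:1v}, recall that $\moyo{\AW}{\gamma}(x(\omega))\to\moyo{\AW}{0}(x(\omega))$ strongly as $\gamma\downarrow0$ whenever $x(\omega)\in\dom\AW$; since $\omega\mapsto\moyo{\AW}{1/n}(x(\omega))\in\mathfrak{G}$ by~\ref{t:1ii} and strong convergence implies weak convergence, Lemma~\ref{l:1}\ref{l:1iv} gives $\omega\mapsto\moyo{\AW}{0}(x(\omega))\in\mathfrak{G}$, which is~\ref{t:1va}. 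If in addition $x\in\dom A$, choose $x^{*}\in\GG$ with $x^{*}(\omega)\in\AW(x(\omega))$ $\mae$ (Proposition~\ref{p:1}\ref{p:1i}); then $\norm{\moyo{\AW}{0}(x(\omega))}_{\HW}=\norm{\proj_{\AW(x(\omega))}0}_{\HW}\leq\norm{x^{*}(\omega)}_{\HW}$ $\mae$, so $v\colon\omega\mapsto\moyo{\AW}{0}(x(\omega))$ lies in $\GG$, defines an element of $\HH$, and $v\in Ax$ by Definition~\ref{d:6}; for every $u\in Ax$ one has $\norm{v(\omega)}_{\HW}\leq\norm{u(\omega)}_{\HW}$ $\mae$, hence $\norm{v}_{\HH}\leq\norm{u}_{\HH}$, and since $Ax$ is nonempty, closed, and convex by~\ref{t:1i} this identifies $v=\proj_{Ax}0=\moyo{A}{0}x$, giving~\ref{t:1vb}.

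The step I expect to be the main obstacle is~\ref{t:1iii}: unlike the purely algebraic identities used for the other items, it genuinely requires letting $\gamma\downarrow0$ in the resolvents, and hence the construction of a single $\mu$-square-integrable majorant of $\omega\mapsto\norm{J_{\gamma\AW}(x(\omega))}_{\HW}$ valid for all small $\gamma$ — which is precisely where the nonemptiness hypothesis~\ref{a:2}\ref{a:2c} does its essential work — together with the passage from norm convergence in $\HH$ to $\mae$ pointwise convergence supplied by Lemma~\ref{l:2}.
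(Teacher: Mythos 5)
Your proof is correct, and while it reaches the same conclusions it takes genuinely different routes at three points. For \ref{t:1i} you verify Minty's surjectivity condition $\ran(\Id_{\HH}+A)=\HH$ directly, getting square-integrability of $\omega\mapsto J_{\AW}(x(\omega))$ from nonexpansiveness of $J_{\AW}$ and the fixed point $z(\omega)=J_{\AW}(z(\omega)+z^*(\omega))$ supplied by Assumption~\ref{a:2}\ref{a:2c}; the paper instead shows that $J_A$ is firmly nonexpansive with full domain by feeding the family $(J_{\AW})_{\omega\in\Omega}$ into Proposition~\ref{p:2}\ref{p:2iii} and concludes via the characterization of maximal monotonicity in terms of the resolvent. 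The underlying estimate is the same, but your version is self-contained where the paper outsources it. For \ref{t:1iii} you prove the two inclusions of $\cdom A=\leftindex^{\mathfrak{G}}{\int}_{\Omega}^{\oplus}\cdom\AW\,\mu(d\omega)$ separately (dominated convergence of $J_{\gamma\AW}(x(\omega))\to x(\omega)$ one way, Lemma~\ref{l:2} the other), whereas the paper first establishes the operator identity $\proj_{\cdom A}=\leftindex^{\mathfrak{G}}{\int}_{\Omega}^{\oplus}\proj_{\cdom\AW}\mu(d\omega)$ and reads the set identity off the fixed-point characterization of projections; both rest on the same resolvent-convergence theorem and on Lemma~\ref{l:2}, and neither is shorter. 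The nicest divergence is \ref{t:1vb}: you identify $v\colon\omega\mapsto\proj_{\AW(x(\omega))}0$ as the minimal-norm element of $Ax$ by integrating the pointwise inequality $\norm{v(\omega)}_{\HW}\leq\norm{u(\omega)}_{\HW}$, which avoids the paper's second pass through the Yosida approximations $\moyo{A}{\gamma_n}x\to\moyo{A}{0}x$ and the extraction of an a.e.\ convergent subsequence via Lemma~\ref{l:2}; this is a genuinely more elementary argument for that item. Your identification of \ref{t:1iii} as the step carrying the analytic weight, and of \ref{a:2}\ref{a:2c} as the source of the integrable majorant, matches where the paper's effort actually goes.
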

\begin{proof}
\ref{t:1i}:
By \cite[Proposition~23.2(i)]{Livre1}
and Assumption~\ref{a:2}\ref{a:2c}, $\ran J_A=\dom A\neq\emp$
and there thus exist $z$ and $r$ in $\HH$ such that $r\in J_Az$
or, equivalently, $z-r\in Ar$. Hence, for $\mu$-almost every
$\omega\in\Omega$, $z(\omega)-r(\omega)\in\AW(r(\omega))$
and, therefore, the monotonicity of $\AW$ yields
$r(\omega)=J_{\AW}(z(\omega))$. Thus, because $r\in\GG$,
we infer from Lemma~\ref{l:1}\ref{l:1ii} that the mapping
$\omega\mapsto J_{\AW}(z(\omega))$ lies in $\GG$.
In turn, appealing to Assumption~\ref{a:2}\ref{a:2b},
we deduce from Proposition~\ref{p:2}\ref{p:2iii}
(applied to the firmly nonexpansive operators
$(J_{\AW})_{\omega\in\Omega}$) and Proposition~\ref{p:16}
that $J_A\colon\HH\to\HH$ is firmly nonexpansive.
Consequently, \cite[Proposition~23.8(iii)]{Livre1}
guarantees that $A$ is maximally monotone.

\ref{t:1ii}:
Use \ref{t:1i}, Proposition~\ref{p:16}, and
Lemma~\ref{l:1}\ref{l:1ii}.

\ref{t:1iii}:
By \ref{t:1i} and \cite[Corollary~21.14]{Livre1},
$\cdom A$ is a nonempty closed convex subset of $\HH$.
Fix temporarily $x\in\GG$, let $(\gamma_n)_{n\in\NN}$ be a
sequence in $\zeroun$ such that $\gamma_n\downarrow 0$, and set
\begin{equation}
p=\proj_{\cdom A}x
\quad\text{and}\quad
(\forall n\in\NN)\;\;
p_n\colon\omega\mapsto J_{\gamma_n\AW}\brk!{x(\omega)}.
\end{equation}
We infer from \ref{t:1iia} that, for every $n\in\NN$, $p_n\in\GG$
and $p_n=J_{\gamma_nA}x$.
Thus, it follows from \ref{t:1i} and \cite[Theorem~23.48]{Livre1}
that $p_n\to p$ in $\HH$. In turn, Lemma~\ref{l:2} ensures that
there exist a strictly increasing sequence $(k_n)_{n\in\NN}$ in
$\NN$ and a set $\Xi\in\FF$ such that $\mu(\Xi)=0$ and
$(\forall\omega\in\complement\Xi)$ $p_{k_n}(\omega)\to p(\omega)$.
On the other hand, we deduce from Assumption~\ref{a:2}\ref{a:2a}
and \cite[Theorem~23.48]{Livre1} that
$(\forall\omega\in\complement\Xi)$
$p_{k_n}(\omega)=J_{\gamma_{k_n}\AW}(x(\omega))
\to\proj_{\cdom\AW}(x(\omega))$.
Therefore $(\forall\omega\in\complement\Xi)$
$p(\omega)=\proj_{\cdom\AW}(x(\omega))$.
Hence, because $p\in\GG$, it results from
Lemma~\ref{l:1}\ref{l:1ii} that the mapping
$\omega\mapsto\proj_{\cdom\AW}(x(\omega))$ is a representative in
$\GG$ of $\proj_{\cdom A}x$. This confirms that
\begin{equation}
\proj_{\cdom A}=
\leftindex^{\mathfrak{G}}{\Int}_{\Omega}^{\oplus}
\proj_{\cdom\AW}\mu(d\omega).
\end{equation}
Therefore, using Definition~\ref{d:3}, we get
\begin{align}
\cdom A
&=\menge{x\in\HH}{x=\proj_{\cdom A}x}
\nonumber\\
&=\Menge2{x\in\HH}{(\forallmu\omega\in\Omega)\,\,
x(\omega)=\proj_{\cdom\AW}\brk!{x(\omega)}}
\nonumber\\
&=\menge{x\in\HH}{(\forallmu\omega\in\Omega)\,\,
x(\omega)\in\cdom\AW}
\nonumber\\
&=\leftindex^{\mathfrak{G}}{\Int}_{\Omega}^{\oplus}
\cdom\AW\,\mu(d\omega).
\end{align}
Thus $\leftindex^{\mathfrak{G}}{\int}_{\Omega}^{\oplus}
\cdom\AW\,\mu(d\omega)$ is a closed subset of $\HH$.
Consequently, we deduce from Proposition~\ref{p:1}\ref{p:1i} and
Definition~\ref{d:3} that
\begin{equation}
\cdom A
\subset\overline{\leftindex^{\mathfrak{G}}{\Int}_{\Omega}^{\oplus}
\dom\AW\,\mu(d\omega)}
\subset\leftindex^{\mathfrak{G}}{\Int}_{\Omega}^{\oplus}
\cdom\AW\,\mu(d\omega)
=\cdom A,
\end{equation}
which furnishes the desired identities.

\ref{t:1iv}:
In the light of Proposition~\ref{p:1}\ref{p:1iv} and
Proposition~\ref{p:6}, the claim follows from
\ref{t:1iii} applied to the family 
$(\AW^{-1})_{\omega\in\Omega}$.

\ref{t:1v}:
Let $(\gamma_n)_{n\in\NN}$ be a sequence in $\zeroun$
such that $\gamma_n\downarrow 0$, and set
\begin{equation}
\label{e:j81i}
p\colon\omega\mapsto\moyo{\AW}{0}\brk!{x(\omega)}
\quad\text{and}\quad
(\forall n\in\NN)\;\;p_n\colon\omega\mapsto
\moyo{\AW}{\gamma_n}\brk!{x(\omega)}.
\end{equation}
Then, on account of \ref{t:1iib},
\begin{equation}
\label{e:xc5v}
(\forall n\in\NN)\quad p_n\in\GG
\quad\text{and}\quad
p_n=\moyo{A}{\gamma_n}x.
\end{equation}

\ref{t:1va}:
For every $\omega\in\Omega$, since $\AW$ is maximally monotone and
$x(\omega)\in\dom\AW$, \cite[Corollary~23.46(i)]{Livre1} yields
$p_n(\omega)\to p(\omega)$. Hence, thanks to
Lemma~\ref{l:1}\ref{l:1iv}, we obtain $p\in\mathfrak{G}$.

\ref{t:1vb}:
Set $q=\moyo{A}{0}x$. It follows from \eqref{e:xc5v}, \ref{t:1i},
and \cite[Corollary~23.46(i)]{Livre1} that
$p_n\to q$ in $\HH$. Thus, we infer from Lemma~\ref{l:2} that
there exists a strictly increasing sequence $(k_n)_{n\in\NN}$ in
$\NN$ such that $(\forallmu\omega\in\Omega)$
$p_{k_n}(\omega)\to q(\omega)$. In turn, $p=q$ $\mae$ and we
conclude by invoking Lemma~\ref{l:1}\ref{l:1ii}.
\end{proof}

\begin{example}
\label{ex:5}
Consider the setting of Example~\ref{ex:1}\ref{ex:1ii}
and suppose that, in addition, $(\forall k\in\NN)$
$\alpha_k=1$ and $\mathsf{H}_k=\RR$. Then $\HH=\ell^2(\NN)$.
Now define $(\forall k\in\NN)$
$\mathsf{A}_k\colon\mathsf{H}_k\to\mathsf{H}_k\colon
\mathsf{x}\mapsto 2^k\mathsf{x}$. Then
\begin{equation}
\dom\brk3{\leftindex^{\mathfrak{G}}{\int}_{\Omega}^{\oplus}
\AW\mu(d\omega)}
=\Menge4{(\mathsf{x}_k)_{k\in\NN}\in\ell^2(\NN)}{
\sum_{k\in\NN}4^k\abs{\mathsf{x}_k}^2<\pinf}
\neq\ell^2(\NN)
=\leftindex^{\mathfrak{G}}{\Int}_{\Omega}^{\oplus}
\dom\AW\,\mu(d\omega).
\end{equation}
The closure operation in items \ref{t:1iii} and \ref{t:1iv} in
Theorem~\ref{t:1} can therefore not be omitted.
\end{example}

\begin{corollary}
\label{c:6}
Let $(\Omega,\FF,\mu)$ be a complete $\sigma$-finite measure 
space, let $\mathsf{H}$ be a separable real Hilbert space,
and for every $\omega\in\Omega$, let
$\AW\colon\mathsf{H}\to 2^{\mathsf{H}}$ be maximally monotone.
Set $\HH=L^2(\Omega,\FF,\mu;\mathsf{H})$ and
\begin{equation}
\label{e:7d04}
A\colon\HH\to 2^{\HH}\colon x\mapsto
\menge{x^*\in\HH}{(\forallmu\omega\in\Omega)\,\,
x^*(\omega)\in\AW\brk!{x(\omega)}}.
\end{equation}
Suppose that $\dom A\neq\emp$.
Then the following are equivalent:
\begin{enumerate}
\item
\label{c:6i}
$A$ is maximally monotone.
\item
\label{c:6ii}
For every $\mathsf{x}\in\mathsf{H}$,
the mapping $\Omega\to\mathsf{H}\colon\omega\mapsto
J_{\AW}\mathsf{x}$ is $(\FF,\BE_{\mathsf{H}})$-measurable.
\item
\label{c:6iii}
For every open set $\boldsymbol{\mathsf{V}}$ in
$\mathsf{H}\oplus\mathsf{H}$,
$\menge{\omega\in\Omega}{\boldsymbol{\mathsf{V}}\cap
\gra\AW\neq\emp}\in\FF$.
\end{enumerate}
\end{corollary}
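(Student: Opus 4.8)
The plan is to work within the instance of Assumption~\ref{a:1} furnished by Example~\ref{ex:1}\ref{ex:1iii}, in which $\mathfrak{G}=\menge{x\colon\Omega\to\mathsf{H}}{x\,\,\text{is $(\FF,\BE_{\mathsf{H}})$-measurable}}$, $\GG=\mathscr{L}^2(\Omega,\FF,\mu;\mathsf{H})$, and the operator $A$ of \eqref{e:7d04} is exactly the Hilbert direct integral $\leftindex^{\mathfrak{G}}{\int}_{\Omega}^{\oplus}\AW\mu(d\omega)$ of Definition~\ref{d:6}; then to prove \ref{c:6i} $\Leftrightarrow$ \ref{c:6ii} and \ref{c:6ii} $\Leftrightarrow$ \ref{c:6iii}. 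Throughout I would use three standing facts, all consequences of the maximal monotonicity of each $\AW$: the resolvent $J_{\AW}=(\Id_{\mathsf{H}}+\AW)^{-1}$ is a single-valued, everywhere-defined, nonexpansive operator on $\mathsf{H}$; the graph $\gra\AW$ is closed; and, by Minty's theorem, $\mathsf{y}\mapsto(J_{\AW}\mathsf{y},\mathsf{y}-J_{\AW}\mathsf{y})$ is a continuous bijection from $\mathsf{H}$ onto $\gra\AW$.

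For \ref{c:6i} $\Rightarrow$ \ref{c:6ii}, I would first note that maximal monotonicity of $A$ gives $\dom J_A=\HH$, and that for any $y\in\GG$ the relation $y-J_Ay\in A(J_Ay)$, read through Definition~\ref{d:6} and the single-valuedness of each $J_{\AW}$, amounts to $(J_Ay)(\omega)=J_{\AW}(y(\omega))$ for $\mu$-a.e.\ $\omega$; hence $\omega\mapsto J_{\AW}(y(\omega))$ is $(\FF,\BE_{\mathsf{H}})$-measurable, being $\mu$-a.e.\ equal to a representative of $J_Ay\in\HH$ and $(\Omega,\FF,\mu)$ complete. Since $\mu$ is $\sigma$-finite, I would take an increasing sequence $(\Omega_k)_{k\in\NN}$ in $\FF$ of finite measure with union $\Omega$ and apply the above to $y=1_{\Omega_k}\mathsf{x}\in\GG$, obtaining that $\omega\mapsto J_{\AW}\mathsf{x}$ is measurable on each $\Omega_k$, hence on $\Omega$. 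For \ref{c:6ii} $\Rightarrow$ \ref{c:6i}, the plan is to check Assumption~\ref{a:2} and quote Theorem~\ref{t:1}\ref{t:1i}: items~\ref{a:2a} and \ref{a:2c} are hypotheses of the corollary, while for item~\ref{a:2b} I would write an arbitrary $x\in\GG$ as the pointwise limit of countably-valued $\FF$-measurable mappings $(x_n)_{n\in\NN}$ (separability of $\mathsf{H}$), note that each $\omega\mapsto J_{\AW}(x_n(\omega))$ is measurable by \ref{c:6ii}, and pass to the limit using nonexpansiveness of $J_{\AW}$ to conclude that $\omega\mapsto J_{\AW}(x(\omega))$ lies in $\mathfrak{G}$.

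For \ref{c:6ii} $\Leftrightarrow$ \ref{c:6iii}, I would read \ref{c:6iii} as the statement that the closed-valued multifunction $\omega\mapsto\gra\AW$ is measurable. Then \ref{c:6ii} $\Rightarrow$ \ref{c:6iii} is clear: fixing $(\mathsf{y}_n)_{n\in\NN}$ dense in $\mathsf{H}$, the maps $\omega\mapsto(J_{\AW}\mathsf{y}_n,\mathsf{y}_n-J_{\AW}\mathsf{y}_n)$ are measurable selections of $\gra\AW$ by \ref{c:6ii}, and by continuity and surjectivity of the bijection from the first paragraph their values are dense in $\gra\AW$ for every $\omega$; this Castaing representation forces $\omega\mapsto\gra\AW$ to be measurable. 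For \ref{c:6iii} $\Rightarrow$ \ref{c:6ii}, I would invoke Castaing's theorem to obtain measurable selections $\omega\mapsto(\mathsf{a}_n(\omega),\mathsf{a}_n^*(\omega))\in\gra\AW$ dense in $\gra\AW$ for every $\omega$; for a fixed $\mathsf{x}\in\mathsf{H}$, I would pick at stage $n$ a measurable index $\kappa_n(\omega)\in\set{1,\dots,n}$ minimizing $j\mapsto\norm{\mathsf{a}_j(\omega)+\mathsf{a}_j^*(\omega)-\mathsf{x}}_{\mathsf{H}}$ and set $\mathsf{b}_n(\omega)=\mathsf{a}_{\kappa_n(\omega)}(\omega)$. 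Since $(J_{\AW}\mathsf{x},\mathsf{x}-J_{\AW}\mathsf{x})\in\gra\AW$ and $\mathsf{b}_n(\omega)=J_{\AW}(\mathsf{a}_{\kappa_n(\omega)}(\omega)+\mathsf{a}_{\kappa_n(\omega)}^*(\omega))$, density of the selections and nonexpansiveness of $J_{\AW}$ give $\mathsf{b}_n(\omega)\to J_{\AW}\mathsf{x}$ for every $\omega$, so $\omega\mapsto J_{\AW}\mathsf{x}$ is measurable as a pointwise limit of measurable maps.

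The step I expect to be the main obstacle is \ref{c:6iii} $\Rightarrow$ \ref{c:6ii}, namely recovering measurability of the single-valued resolvent from measurability of the graph multifunction. The tempting shortcut — intersecting $\omega\mapsto\gra\AW$ with the fixed closed affine set $\menge{(\mathsf{p},\mathsf{p}^*)}{\mathsf{p}+\mathsf{p}^*=\mathsf{x}}$, whose sole element is $(J_{\AW}\mathsf{x},\mathsf{x}-J_{\AW}\mathsf{x})$ — is not available, since intersections of measurable closed-valued multifunctions need not be measurable in infinite dimensions; this is why I would route the argument through a Castaing representation and use nonexpansiveness of $J_{\AW}$ to build $J_{\AW}\mathsf{x}$ as an explicit everywhere limit of measurable maps. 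A subsidiary point to handle with care is the $\sigma$-finite exhaustion used in \ref{c:6i} $\Rightarrow$ \ref{c:6ii}, needed because constant $\mathsf{H}$-valued mappings need not belong to $\GG$ when $\mu(\Omega)=\pinf$.
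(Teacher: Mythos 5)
Your proposal is correct, and for the equivalence \ref{c:6i}$\Leftrightarrow$\ref{c:6ii} it follows essentially the same route as the paper: identify $A$ with the Hilbert direct integral of $(\AW)_{\omega\in\Omega}$ via Example~\ref{ex:1}\ref{ex:1iii}, derive \ref{c:6ii} from the identity $J_A=\leftindex^{\mathfrak{G}}{\int}_{\Omega}^{\oplus}J_{\AW}\mu(d\omega)$ together with a $\sigma$-finite exhaustion applied to $1_{\Omega_k}\mathsf{x}$ (the paper lets $k\to\pinf$ and uses continuity of $J_{\AW}$ plus Lemma~\ref{l:1}\ref{l:1iv}, whereas you restrict to each $\Omega_k$; both work), and obtain the converse by verifying Assumption~\ref{a:2} and invoking Theorem~\ref{t:1}\ref{t:1i}. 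For item~\ref{a:2b} the paper cites the Carath\'eodory-measurability lemma \cite[Lemma~III.14]{Cast77} applied to $(\omega,\mathsf{x})\mapsto J_{\AW}\mathsf{x}$; your approximation by countably-valued measurable mappings combined with nonexpansiveness of $J_{\AW}$ is a direct proof of the same fact and is equally valid.

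The genuine divergence is in \ref{c:6ii}$\Leftrightarrow$\ref{c:6iii}: the paper disposes of it in one line by citing \cite[Lemme~2.1 and Th\'eor\`eme~2.1]{Atto79}, while you give a self-contained argument. Your two directions are sound: for \ref{c:6ii}$\Rightarrow$\ref{c:6iii}, the Minty parametrization $\mathsf{y}\mapsto(J_{\AW}\mathsf{y},\mathsf{y}-J_{\AW}\mathsf{y})$ is a continuous surjection onto $\gra\AW$, so the images of a dense sequence $(\mathsf{y}_n)_{n\in\NN}$ form a Castaing representation and measurability of $\menge{\omega\in\Omega}{\boldsymbol{\mathsf{V}}\cap\gra\AW\neq\emp}$ follows; for the converse, Castaing's theorem applies because each $\gra\AW$ is a nonempty closed subset of the Polish space $\mathsf{H}\oplus\mathsf{H}$, your argmin index $\kappa_n$ is measurable (taking, say, the smallest minimizer), and since $\mathsf{a}_j(\omega)=J_{\AW}(\mathsf{a}_j(\omega)+\mathsf{a}_j^*(\omega))$ with $\mathsf{a}_{\kappa_n(\omega)}(\omega)+\mathsf{a}_{\kappa_n(\omega)}^*(\omega)\to\mathsf{x}$ by density, nonexpansiveness yields $\mathsf{b}_n(\omega)\to J_{\AW}\mathsf{x}$ pointwise. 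Your diagnosis of where the naive argument fails (intersecting the graph multifunction with a fixed affine set) is also the right reason Attouch's lemma, or your explicit construction, is needed. What the paper's citation buys is brevity; what your version buys is a proof that does not depend on \cite{Atto79}.
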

\begin{proof}
In the light of Example~\ref{ex:1}\ref{ex:1iii}, $\HH$ is the
Hilbert direct integral of the $\FF$-measurable vector field
$((\HW)_{\omega\in\Omega},\mathfrak{G})$ defined by
\begin{equation}
\label{e:fvor}
\brk[s]!{\;(\forall\omega\in\Omega)\;\;
\mathsf{H}_{\omega}=\mathsf{H}\;}
\quad\text{and}\quad
\mathfrak{G}=\menge{x\colon\Omega\to\mathsf{H}}{
x\,\,\text{is $(\FF,\BE_{\mathsf{H}})$-measurable}}.
\end{equation}
Additionally, by \eqref{e:7d04},
\begin{equation}
\label{e:lpza}
A=\leftindex^{\mathfrak{G}}{\int}_{\Omega}^{\oplus}
\AW\mu(d\omega).
\end{equation}

\ref{c:6i}$\Rightarrow$\ref{c:6ii}:
We have $\dom A\neq\emp$ and $J_A\colon\HH\to\HH$
\cite[Corollary~23.11(i)]{Livre1}.
Thus, invoking Proposition~\ref{p:16} and
Lemma~\ref{l:1}\ref{l:1ii}, we deduce that
\begin{equation}
\label{e:g0d4}
\brk!{\forall x\in\mathscr{L}^2\brk!{\Omega,\FF,\mu;\mathsf{H}}}
\quad\text{the mapping $\Omega\to\mathsf{H}\colon\omega\mapsto
J_{\AW}\brk!{x(\omega)}$ lies in $
\mathscr{L}^2\brk!{\Omega,\FF,\mu;\mathsf{H}}$}.
\end{equation}
Next, take $\mathsf{x}\in\mathsf{H}$.
Since $(\Omega,\FF,\mu)$ is $\sigma$-finite,
there exists an increasing sequence $(\Omega_n)_{n\in\NN}$ 
in $\FF$ of finite $\mu$-measure sets such that
$\bigcup_{n\in\NN}\Omega_n=\Omega$. In turn,
$\set{1_{\Omega_n}\mathsf{x}}_{n\in\NN}\subset
\mathscr{L}^2\brk{\Omega,\FF,\mu;\mathsf{H}}$ and
$(\forall\omega\in\Omega)$
$1_{\Omega_n}(\omega)\mathsf{x}\to\mathsf{x}$.
Hence, on account of \eqref{e:g0d4},
we deduce that, for every $n\in\NN$, the mapping
$\Omega\to\mathsf{H}\colon
\omega\mapsto J_{\AW}(1_{\Omega_n}(\omega)\mathsf{x})$ is
$(\FF,\BE_{\mathsf{H}})$-measurable. In addition,
the continuity of the operators $(J_{\AW})_{\omega\in\Omega}$
yields $(\forall\omega\in\Omega)$
$J_{\AW}(1_{\Omega_n}(\omega)\mathsf{x})\to J_{\AW}\mathsf{x}$.
Altogether, it results from Lemma~\ref{l:1}\ref{l:1iv}
that the mapping $\Omega\to\mathsf{H}\colon
\omega\mapsto J_{\AW}\mathsf{x}$ is
$(\FF,\BE_{\mathsf{H}})$-measurable.

\ref{c:6ii}$\Rightarrow$\ref{c:6i}:
Applying \cite[Lemma~III.14]{Cast77} to the mapping
$\Omega\times\mathsf{H}\to\mathsf{H}\colon
(\omega,\mathsf{x})\mapsto J_{\AW}\mathsf{x}$, we deduce that,
for every $x\in\mathfrak{G}$, the mapping $\omega\mapsto
J_{\AW}(x(\omega))$ lies in $\mathfrak{G}$.
Therefore, in the setting of \eqref{e:fvor}, the family
$(\AW)_{\omega\in\Omega}$ satisfies Assumption~\ref{a:2}.
Consequently, we conclude via \eqref{e:lpza} and
Theorem~\ref{t:1}\ref{t:1i} that $A$ is maximally monotone.

\ref{c:6ii}$\Leftrightarrow$\ref{c:6iii}:
Combine \cite[Lemme~2.1]{Atto79} and
\cite[Th\'eor\`eme~2.1]{Atto79}.
\end{proof}

\begin{remark}
The implication \ref{c:6iii}$\Rightarrow$\ref{c:6i} in
Corollary~\ref{c:6} is stated in \cite[Theorem~5.1]{Penn03}.
\end{remark}

\begin{proposition}
\label{p:9}
Suppose that Assumption~\ref{a:1} is in force.
Let $\mathsf{G}$ be a separable real Hilbert space and,
for every $\omega\in\Omega$, let
$\LW\colon\mathsf{G}\to\HW$ be linear and bounded.
Suppose that, for every $\mathsf{z}\in\mathsf{G}$, the mapping
\begin{equation}
\label{e:qk0f}
\mathfrak{e}_{\mathsf{L}}\mathsf{z}\colon
\omega\mapsto\LW\mathsf{z}
\end{equation}
lies in $\mathfrak{G}$. Then the following holds:
\begin{enumerate}
\item
\label{p:9i-}
The function $\Omega\to\RR\colon\omega\mapsto\norm{\LW}$ is
$\FF$-measurable.
\end{enumerate}
Suppose additionally that
$\int_{\Omega}\norm{\LW}^2\mu(d\omega)<\pinf$ and define
\begin{equation}
\label{e:n9we}
L\colon\mathsf{G}\to\HH\colon\mathsf{z}\mapsto
\mathfrak{e}_{\mathsf{L}}\mathsf{z}.
\end{equation}
Then the following hold:
\begin{enumerate}[resume]
\item
\label{p:9i}
$L$ is well defined, linear, and bounded with $\norm{L}\leq
\sqrt{\int_{\Omega}\norm{\LW}^2\mu(d\omega)}$.
\item
\label{p:9ii}
Let $x^*\in\mathfrak{G}$. Then the mapping
$\Omega\to\mathsf{G}\colon\omega\mapsto\LW^*(x^*(\omega))$ is
$(\FF,\BE_{\mathsf{G}})$-measurable.
\item
\label{p:9iii}
Let $x^*\in\GG$. Then the mapping
$\Omega\to\mathsf{G}\colon\omega\mapsto\LW^*(x^*(\omega))$
is Lebesgue $\mu$-integrable.
\item
\label{p:9iv}
$L^*\colon\HH\to\mathsf{G}\colon x^*\mapsto
\int_{\Omega}\LW^*(x^*(\omega))\mu(d\omega)$.
\end{enumerate}
\end{proposition}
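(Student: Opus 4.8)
The plan is to establish the five assertions in the stated order, the whole argument reducing to Lemma~\ref{l:1}, property~\ref{d:1a} in Definition~\ref{d:1}, and standard facts about the Bochner integral. Since $\mathsf{G}$ is separable, fix once and for all a sequence $(\mathsf{z}_n)_{n\in\NN}$ that is dense in the closed unit ball of $\mathsf{G}$; note that $\{\mathsf{z}_n\}_{n\in\NN}$ is then total in $\mathsf{G}$.

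For \ref{p:9i-}, continuity of each $\LW$ together with density gives $(\forall\omega\in\Omega)$ $\norm{\LW}=\sup_{n\in\NN}\norm{\LW\mathsf{z}_n}_{\HW}$; since $\mathfrak{e}_{\mathsf{L}}\mathsf{z}_n$ lies in $\mathfrak{G}$ by hypothesis, property~\ref{d:1a} in Definition~\ref{d:1} makes $\omega\mapsto\norm{\LW\mathsf{z}_n}_{\HW}$ $\FF$-measurable, and a countable supremum of $\FF$-measurable functions is $\FF$-measurable. Assume henceforth that $\int_\Omega\norm{\LW}^2\mu(d\omega)<\pinf$. Then \ref{p:9i} is immediate: given $\mathsf{z}\in\mathsf{G}$, the mapping $\mathfrak{e}_{\mathsf{L}}\mathsf{z}$ lies in $\mathfrak{G}$ by hypothesis and $\int_\Omega\norm{\LW\mathsf{z}}_{\HW}^2\mu(d\omega)\leq\norm{\mathsf{z}}^2\int_\Omega\norm{\LW}^2\mu(d\omega)<\pinf$, so $\mathfrak{e}_{\mathsf{L}}\mathsf{z}\in\GG$ and $L\mathsf{z}$ is a well-defined element of $\HH$; linearity holds pointwise in $\omega$, and the last estimate combined with \eqref{e:y9d2} yields $\norm{L}\leq\sqrt{\int_\Omega\norm{\LW}^2\mu(d\omega)}$. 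For \ref{p:9ii}, since $\mathsf{G}$ is separable and $\{\mathsf{z}_n\}_{n\in\NN}$ is total, it suffices to check that $\omega\mapsto\scal{\LW^*(x^*(\omega))}{\mathsf{z}_n}_{\mathsf{G}}=\scal{x^*(\omega)}{\LW\mathsf{z}_n}_{\HW}$ is $\FF$-measurable for every $n\in\NN$; as $x^*\in\mathfrak{G}$ and $\mathfrak{e}_{\mathsf{L}}\mathsf{z}_n\in\mathfrak{G}$, this is precisely Lemma~\ref{l:1}\ref{l:1i}. (Alternatively, one may invoke \cite[Lemma~III.14]{Cast77}, as in the proof of Corollary~\ref{c:6}.)

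Item \ref{p:9iii} then follows by combining \ref{p:9ii}, which applies since $\GG\subset\mathfrak{G}$, with the pointwise estimate $\norm{\LW^*(x^*(\omega))}_{\mathsf{G}}\leq\norm{\LW}\,\norm{x^*(\omega)}_{\HW}$ (recall $\norm{\LW^*}=\norm{\LW}$): the right-hand side is $\FF$-measurable by \ref{p:9i-} and property~\ref{d:1a}, and the Cauchy--Schwarz inequality in $L^2(\Omega,\FF,\mu;\RR)$ bounds its integral by $\sqrt{\int_\Omega\norm{\LW}^2\mu(d\omega)}\,\norm{x^*}_{\HH}<\pinf$; hence the $\mathsf{G}$-valued mapping $\omega\mapsto\LW^*(x^*(\omega))$ is strongly measurable with integrable norm, so it is Bochner integrable. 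Finally, for \ref{p:9iv}, set $Mx^*=\int_\Omega\LW^*(x^*(\omega))\mu(d\omega)$; this is well defined on $\HH$ by \ref{p:9iii} and independent of the representative chosen, since $\mae$ equal representatives produce $\mae$ equal integrands. Then, for every $\mathsf{z}\in\mathsf{G}$ and $x^*\in\HH$, invoking \eqref{e:y9d2}, the definition \eqref{e:n9we}--\eqref{e:qk0f} of $L$, the adjointness $\scal{\LW\mathsf{z}}{x^*(\omega)}_{\HW}=\scal{\mathsf{z}}{\LW^*(x^*(\omega))}_{\mathsf{G}}$, and the commutation of the Bochner integral with the bounded linear functional $\scal{\mathsf{z}}{\Cdot}_{\mathsf{G}}$, one gets $\scal{L\mathsf{z}}{x^*}_{\HH}=\scal{\mathsf{z}}{Mx^*}_{\mathsf{G}}$; since $L$ is bounded by \ref{p:9i}, this forces $M=L^*$.

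The argument is essentially routine; the only step requiring care is the passage, in \ref{p:9ii}, from scalar $\FF$-measurability against the total sequence $(\mathsf{z}_n)_{n\in\NN}$ to $(\FF,\BE_{\mathsf{G}})$-measurability of the $\mathsf{G}$-valued mapping $\omega\mapsto\LW^*(x^*(\omega))$, which is the Pettis measurability theorem for the separable space $\mathsf{G}$.
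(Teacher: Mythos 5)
Your proposal is correct and follows essentially the same route as the paper's proof: measurability of $\omega\mapsto\norm{\LW}$ via a countable dense subset of the unit ball and property [A], the same norm estimate for \ref{p:9i}, weak measurability via Lemma~\ref{l:1}\ref{l:1i} upgraded to strong measurability by Pettis/separability (the paper cites Schwartz's Th\'eor\`eme~5.6.24 for this step), Cauchy--Schwarz for Bochner integrability, and commutation of $\scal{\mathsf{z}}{\Cdot}_{\mathsf{G}}$ with the Bochner integral to identify the adjoint. No gaps.
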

\begin{proof}
\ref{p:9i-}:
Let $\set{\mathsf{z}_n}_{n\in\NN}$ be a dense subset of
$\menge{\mathsf{z}\in\mathsf{G}}{
\norm{\mathsf{z}}_{\mathsf{G}}\leq 1}$. 
On the one hand, property~\ref{d:1a} in Definition~\ref{d:1}
ensures that, for every $n\in\NN$, the function
$\Omega\to\RR\colon\omega\mapsto\norm{\LW\mathsf{z}_n}_{\HW}$
is $\FF$-measurable. On the other hand, thanks to the continuity
of the operators $(\LW)_{\omega\in\Omega}$,
\begin{equation}
(\forall\omega\in\Omega)\quad
\norm{\LW}
=\sup_{\substack{\mathsf{z}\in\mathsf{G}\\
\norm{\mathsf{z}}_{\mathsf{G}}\leq 1}}
\norm{\LW\mathsf{z}}_{\HW}
=\sup_{n\in\NN}
\norm{\LW\mathsf{z}_n}_{\HW}.
\end{equation}
Altogether, the function $\Omega\to\RR\colon\omega\mapsto
\norm{\LW}$ is $\FF$-measurable.

\ref{p:9i}:
For every $\mathsf{z}\in\mathsf{G}$, we deduce from
\eqref{e:qk0f} that
\begin{equation}
\label{e:692z}
\int_{\Omega}
\norm{(\mathfrak{e}_{\mathsf{L}}\mathsf{z})(\omega)}_{\HW}^2
\mu(d\omega)
=\int_{\Omega}\norm{\LW\mathsf{z}}_{\HW}^2\mu(d\omega)
\leq\norm{\mathsf{z}}_{\mathsf{G}}^2
\int_{\Omega}\norm{\LW}^2
\mu(d\omega)
<\pinf
\end{equation}
and, in turn, from \eqref{e:4a5t} that
$\mathfrak{e}_{\mathsf{L}}\mathsf{z}\in\GG$.
This confirms that $L$ is well defined. In addition, the linearity
of the operators $(\LW)_{\omega\in\Omega}$
guarantees that of $L$. The last claims follow from
\eqref{e:692z} and \eqref{e:y9d2}.

\ref{p:9ii}:
For every $\mathsf{z}\in\mathsf{G}$, Lemma~\ref{l:1}\ref{l:1i}
implies that the function $\Omega\to\RR\colon\omega\mapsto
\scal{\mathsf{z}}{\LW^*(x^*(\omega))}_{\mathsf{G}}
=\scal{\LW\mathsf{z}}{x^*(\omega)}_{\HW}$ is $\FF$-measurable.
In turn, invoking the separability of
$\mathsf{G}$, as well as the fact that $(\Omega,\FF,\mu)$ is
complete and $\sigma$-finite, we derive from
\cite[Th\'eor\`eme~5.6.24]{Sch93b} that the mapping
$\Omega\to\mathsf{G}\colon\omega\mapsto\LW^*(x^*(\omega))$ is
$(\FF,\BE_{\mathsf{G}})$-measurable.

\ref{p:9iii}:
By the Cauchy--Schwarz inequality, 
\begin{align}
\int_{\Omega}\norm!{\LW^*\brk!{x^*(\omega)}}_{\mathsf{G}}\,
\mu(d\omega)
&\leq\int_{\Omega}\norm{\LW}\,\norm{x^*(\omega)}_{\HW}\mu(d\omega)
\nonumber\\
&\leq\sqrt{\int_{\Omega}\norm{\LW}^2\mu(d\omega)}
\sqrt{\int_{\Omega}\norm{x^*(\omega)}_{\HW}^2\mu(d\omega)}
\nonumber\\
&<\pinf.
\end{align}
Hence, the assertion follows from
\cite[Th\'eor\`eme~5.7.21]{Sch93b}.

\ref{p:9iv}:
Take $x^*\in\HH$. It results from \eqref{e:y9d2},
\eqref{e:n9we}, \eqref{e:qk0f}, \ref{p:9iii}, and
\cite[Th\'eor\`eme~5.8.16]{Sch93b} that
\begin{align}
(\forall\mathsf{z}\in\mathsf{G})\quad
\scal{\mathsf{z}}{L^*x^*}_{\mathsf{G}}
&=\scal{L\mathsf{z}}{x^*}_{\HH}
\nonumber\\
&=\int_{\Omega}\scal{\LW\mathsf{z}}{x^*(\omega)}_{
\HW}\mu(d\omega)
\nonumber\\
&=\int_{\Omega}\scal!{\mathsf{z}}{\LW^*
\brk!{x^*(\omega)}}_{\mathsf{G}}\,\mu(d\omega)
\nonumber\\
&=\scal3{\mathsf{z}}{\int_{\Omega}
\LW^*\brk!{x^*(\omega)}\mu(d\omega)}_{\mathsf{G}},
\end{align}
which completes the proof.
\end{proof}

\section{Hilbert direct integrals of functions}
\label{sec:4}

We study the Hilbert direct integrals of families of functions
introduced in Definition~\ref{d:5}.

\begin{lemma}
\label{l:3}
Let $\HH$ be a real Hilbert space and let $T\colon\HH\to\HH$.
Then the following hold:
\begin{enumerate}
\item
\label{l:3i}
There exists $f\in\Gamma_0(\HH)$ such that $T=\prox_f$ if and only
if $T$ is nonexpansive and cyclically monotone, that is,
for every $2\leq n\in\NN$ and every
$(x_1,\ldots,x_{n+1})\in\HH^{n+1}$ such that $x_{n+1}=x_1$,
\begin{equation}
\sum_{k=1}^n\scal{x_{k+1}-x_k}{Tx_k}_{\HH}\leq 0.
\end{equation}
\item
\label{l:3ii}
There exists a nonempty closed convex subset $C$ of $\HH$ such
that $T=\proj_C$ if and only if
\begin{equation}
(\forall x\in\HH)(\forall y\in\HH)\quad
\scal{Ty-Tx}{x-Tx}_{\HH}\leq 0.
\end{equation}
\end{enumerate}
\end{lemma}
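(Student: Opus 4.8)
The plan is to prove each of the two equivalences separately, in each case disposing of the easy implication by a direct appeal to known facts (all available in \cite{Livre1}) and establishing the substantive implication by reconstructing the relevant convex object.

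For \ref{l:3i}, the implication ``$T=\prox_f\Rightarrow T$ nonexpansive and cyclically monotone'' is immediate: $\prox_f=J_{\partial f}$ is firmly nonexpansive, hence nonexpansive, and by Moreau's decomposition $\prox_f=\Id_{\HH}-\prox_{f^*}=\nabla\moyo{f^*}{1}$ with $\moyo{f^*}{1}\in\Gamma_0(\HH)$ \cite{Livre1}, so $T$ is the gradient of a convex function and therefore cyclically monotone. For the converse I would argue as follows. First, the instance $n=2$ of cyclic monotonicity reduces to $\scal{x-y}{Tx-Ty}_{\HH}\geq 0$, so $T$ is monotone; being moreover single-valued, everywhere defined, and nonexpansive (hence continuous), it is maximally monotone, and being maximally cyclically monotone, $T=\partial g=\nabla g$ for some $g\in\Gamma_0(\HH)$ with $\dom g=\HH$ \cite{Livre1}. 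Since $\nabla g$ is $1$-Lipschitzian and $g$ is convex, the duality between Lipschitz gradients and strong convexity shows that $g^*$ is $1$-strongly convex (and, by the Baillon--Haddad theorem, $T$ is even firmly nonexpansive) \cite{Livre1}; hence $f:=g^*-\tfrac12\norm{\Cdot}_{\HH}^2$ is proper, lower semicontinuous, and convex, i.e., $f\in\Gamma_0(\HH)$. It then remains to verify that $\prox_f=T$: from $f^{**}=f$ and the conjugacy formula for the Moreau envelope one gets $\moyo{f^*}{1}=(f+\tfrac12\norm{\Cdot}_{\HH}^2)^*=(g^*)^*=g$, whence, by Moreau's decomposition again, $\prox_f=\nabla\moyo{f^*}{1}=\nabla g=T$.

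For \ref{l:3ii}, the implication ``$T=\proj_C\Rightarrow$ the displayed inequality'' is just the variational characterization of the projection: $Tx,Ty\in C$ gives $\scal{Ty-Tx}{x-Tx}_{\HH}\leq 0$ \cite{Livre1}. For the converse, I would (a) add to the hypothesis the inequality obtained by interchanging $x$ and $y$ and rearrange, obtaining $\scal{x-y}{Tx-Ty}_{\HH}\geq\norm{Tx-Ty}_{\HH}^2$, so $T$ is firmly nonexpansive and in particular continuous; (b) apply the hypothesis with $Ty$ in place of $x$ and the same $y$ to get $\norm{Ty-T(Ty)}_{\HH}^2\leq 0$, hence $T(Ty)=Ty$, so that $\ran T=\Fix T$; (c) set $C:=\Fix T=\ran T$, which is nonempty (as $\HH\neq\emp$) and, since $T$ is nonexpansive, closed and convex \cite{Livre1}; and (d) check the projection characterization: for $x\in\HH$, $Tx\in C$, and for any $c\in C=\ran T$ one has $Tc=c$, so the hypothesis applied to the pair $(x,c)$ gives $\scal{c-Tx}{x-Tx}_{\HH}=\scal{Tc-Tx}{x-Tx}_{\HH}\leq 0$; hence $Tx=\proj_C x$.

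The routine ingredients are the cited facts (firm nonexpansiveness of $\prox_f$ and of projections, Moreau's decomposition, Rockafellar's theorem on maximally cyclically monotone operators, the variational characterization of the projection, and closedness/convexity of fixed-point sets of nonexpansive maps). The only delicate point is the bookkeeping in the converse of \ref{l:3i}: one must confirm that $f=g^*-\tfrac12\norm{\Cdot}_{\HH}^2$ genuinely belongs to $\Gamma_0(\HH)$ --- this is exactly where the $1$-Lipschitz continuity of $\nabla g$, via the $1$-strong convexity of $g^*$, is used --- and that the Moreau-envelope conjugacy identity applies so as to identify $\moyo{f^*}{1}$ with $g$; once these are in place, the identification $\prox_f=T$ is a one-line consequence of Moreau's decomposition.
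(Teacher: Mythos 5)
Your proof is correct, but it takes a more self-contained route than the paper on both items. For \ref{l:3i}, the paper handles both directions by citing Moreau's characterization \cite[Corollaire~10.c]{More65} (a nonexpansive map is a proximity operator if and only if it is the gradient of a convex function), combined, for the converse, with exactly the chain you use --- the $n=2$ case gives monotonicity, continuity and full domain give maximal monotonicity via \cite[Corollary~20.28]{Livre1}, and Rockafellar's theorem \cite[Theorem~22.18]{Livre1} gives $T=\partial g$. Where you diverge is that, instead of then invoking Moreau's result as a black box, you reprove it: you pass to the conjugate, use the equivalence between the $1$-Lipschitz continuity of $\nabla g$ and the $1$-strong convexity of $g^*$ to exhibit $f=g^*-\tfrac12\norm{\Cdot}_{\HH}^2\in\Gamma_0(\HH)$ explicitly, and identify $\prox_f=\nabla\moyo{(g^{**})}{1}{}^{*}\dots=\nabla g=T$ via the envelope--conjugacy identity $\moyo{(f^*)}{1}=(f+\tfrac12\norm{\Cdot}_{\HH}^2)^*=g$. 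That bookkeeping is sound (properness and lower semicontinuity of $f$ follow since $g^*\in\Gamma_0(\HH)$ and $\tfrac12\norm{\Cdot}_{\HH}^2$ is finite and continuous), and it buys an explicit formula for $f$ in terms of $g$, at the cost of length. For \ref{l:3ii}, the paper simply cites Zarantonello \cite[Theorem~1.1]{Zara71}, whereas you give a direct elementary proof (symmetrizing to get firm nonexpansiveness, substituting $x=Ty$ to get $\ran T=\Fix T$, taking $C=\Fix T$, and verifying the variational characterization of $\proj_C$); each step checks out, and the argument is a clean replacement for the citation.
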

\begin{proof}
\ref{l:3i}:
The core of our argument is implicitly in
\cite[Corollaire~10.c]{More65}. Suppose that there exists
$f\in\Gamma_0(\HH)$ such that $T=\prox_f$. Then, on account of
\cite[Corollaire~10.c]{More65} and
\cite[Proposition~22.14]{Livre1}, $T$ is nonexpansive and
cyclically monotone. Conversely, suppose that $T$ is nonexpansive
and cyclically monotone. Then $T$ is monotone and it thus follows
from \cite[Corollary~20.28]{Livre1} that $T$ is maximally
monotone. Therefore, Rockafellar's cyclic monotonicity theorem
\cite[Theorem~22.18]{Livre1} guarantees the existence of a
function $\varphi\in\Gamma_0(\HH)$ such that $T=\partial\varphi$.
We conclude by invoking \cite[Corollaire~10.c]{More65}.

\ref{l:3ii}:
See \cite[Theorem~1.1]{Zara71}.
\end{proof}

\begin{remark}
\label{r:9}
In connection with Lemma~\ref{l:3}\ref{l:3i}, a characterization of
proximity operators based on firm nonexpansiveness and an
alternative cyclic inequality is provided in
\cite[Theorem~6.6]{Bart07}.
\end{remark}

In \cite{Mor63a,More65}, Moreau showed that the convex combination
of finitely many proximity operators acting on the same Hilbert
space is a proximity operator. Here is a generalization of this
result.

\begin{theorem}
\label{t:41}
Suppose that Assumption~\ref{a:1} is in force.
Let $\mathsf{G}$ be a separable real Hilbert space and,
for every $\omega\in\Omega$, let
$\mathsf{f}_{\omega}\in\Gamma_0(\HW)$ and let
$\LW\colon\mathsf{G}\to\HW$ be linear and bounded.
Suppose that the following are satisfied:
\begin{enumerate}[label={\normalfont[\Alph*]}]
\item
For every $x\in\GG$, the mapping
$\omega\mapsto\prox_{\mathsf{f}_{\omega}}(x(\omega))$ lies
in $\mathfrak{G}$.
\item
There exists $z\in\GG$ such that the mapping
$\omega\mapsto\prox_{\mathsf{f}_{\omega}}(z(\omega))$ lies in
$\GG$. 
\item
For every $\mathsf{z}\in\mathsf{G}$, the mapping
$\mathfrak{e}_{\mathsf{L}}\mathsf{z}\colon
\omega\mapsto\LW\mathsf{z}$ lies in $\mathfrak{G}$.
\item
$\int_{\Omega}\norm{\LW}^2\mu(d\omega)\leq 1$.
\end{enumerate}
Then
\begin{equation}
\label{e:i5}
\brk!{\exi\mathsf{g}\in\Gamma_0(\mathsf{G})}
(\forall\mathsf{z}\in\mathsf{G})\quad
\prox_{\mathsf{g}}\mathsf{z}=\int_{\Omega}
\LW^*\brk!{\prox_{\mathsf{f}_{\omega}}(\LW\mathsf{z})}
\mu(d\omega).
\end{equation}
\end{theorem}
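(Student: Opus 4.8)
The plan is to exhibit the function $\mathsf{g}$ explicitly as a Hilbert direct integral and then identify its proximity operator via Lemma~\ref{l:3}\ref{l:3i}, i.e.\ by checking that the right-hand side of \eqref{e:i5} is nonexpansive and cyclically monotone. First I would set $f=\leftindex^{\mathfrak{G}}{\int}_{\Omega}^{\oplus}\mathsf{f}_{\omega}\mu(d\omega)$ (Definition~\ref{d:5}; the measurability hypothesis there follows from hypothesis~[A] together with \cite[Proposition~22.14]{Livre1}, which writes $\mathsf{f}_{\omega}$ in terms of $\prox_{\mathsf{f}_{\omega}}$, or more directly from a standard normal-integrand argument). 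Since each $\prox_{\mathsf{f}_{\omega}}$ is firmly nonexpansive, hence sequentially strong-to-weak continuous, hypotheses~[A] and~[B] put us exactly in the situation of Proposition~\ref{p:2} and Proposition~\ref{p:16}, applied to the family $(\prox_{\mathsf{f}_{\omega}})_{\omega\in\Omega}=(J_{\partial\mathsf{f}_{\omega}})_{\omega\in\Omega}$. Meanwhile hypotheses~[C]--[D] and Proposition~\ref{p:9} produce the bounded linear operator $L\colon\mathsf{G}\to\HH\colon\mathsf{z}\mapsto\mathfrak{e}_{\mathsf{L}}\mathsf{z}$ with $\norm{L}\leq 1$, together with the adjoint formula $L^*x^*=\int_{\Omega}\LW^*(x^*(\omega))\mu(d\omega)$.

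Next I would recognize the right-hand side of \eqref{e:i5} as $L^*\circ P\circ L$, where $P=\leftindex^{\mathfrak{G}}{\int}_{\Omega}^{\oplus}\prox_{\mathsf{f}_{\omega}}\mu(d\omega)$. Indeed, for $\mathsf{z}\in\mathsf{G}$ one has $(L\mathsf{z})(\omega)=\LW\mathsf{z}$, so $(P(L\mathsf{z}))(\omega)=\prox_{\mathsf{f}_{\omega}}(\LW\mathsf{z})$ $\mae$ (here [A]--[B] guarantee that this mapping indeed lies in $\GG$, so that $P(L\mathsf{z})\in\HH$ and Proposition~\ref{p:16} identifies $P=\prox_f=J_{\partial f}$ as a firmly nonexpansive operator on $\HH$), and then applying the adjoint formula from Proposition~\ref{p:9}\ref{p:9iv} gives $L^*(P(L\mathsf{z}))=\int_{\Omega}\LW^*(\prox_{\mathsf{f}_{\omega}}(\LW\mathsf{z}))\mu(d\omega)$. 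Thus the operator in question is $T:=L^*\circ\prox_f\circ L$. Since $\norm{L}\leq 1$, $L^*$ is also a nonexpansion, and $\prox_f$ is nonexpansive, $T$ is nonexpansive. For cyclic monotonicity: $\prox_f=\partial(f^*\infconv(\norm{\Cdot}_\HH^2/2))^*$ — more simply, $\prox_f$ is the gradient of the convex function $\HH\to\RR\colon x\mapsto\norm{x}_\HH^2/2-\moyo{f}{1}(x)$ (Moreau), equivalently $\prox_f=\partial\psi$ for some $\psi\in\Gamma_0(\HH)$ by Lemma~\ref{l:3}\ref{l:3i} since $\prox_f$ is nonexpansive and cyclically monotone. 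Then for any cycle $\mathsf{z}_1,\dots,\mathsf{z}_{n+1}=\mathsf{z}_1$ in $\mathsf{G}$, writing $\mathsf{z}_k^*=T\mathsf{z}_k=L^*\prox_f(L\mathsf{z}_k)$,
\begin{equation}
\sum_{k=1}^n\scal{\mathsf{z}_{k+1}-\mathsf{z}_k}{\mathsf{z}_k^*}_{\mathsf{G}}
=\sum_{k=1}^n\scal{L\mathsf{z}_{k+1}-L\mathsf{z}_k}{\prox_f(L\mathsf{z}_k)}_{\HH}\leq 0,
\end{equation}
because $(L\mathsf{z}_k)_{1\le k\le n+1}$ is a cycle in $\HH$ and $\prox_f$ is cyclically monotone. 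Hence $T$ is nonexpansive and cyclically monotone, so Lemma~\ref{l:3}\ref{l:3i} delivers $\mathsf{g}\in\Gamma_0(\mathsf{G})$ with $\prox_{\mathsf{g}}=T$, which is \eqref{e:i5}.

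The one point requiring care — and the likely main obstacle — is verifying the hypotheses of Proposition~\ref{p:9} (in particular $\int_\Omega\norm{\LW}^2\mu(d\omega)<\pinf$, which is exactly [D] with the bound $1$, and the $\FF$-measurability of $\omega\mapsto\norm{\LW}$) and, more delicately, confirming that $P(L\mathsf{z})$ genuinely lies in $\HH=\leftindex^{\mathfrak{G}}{\int}_{\Omega}^{\oplus}\HW\mu(d\omega)$ rather than merely in $\mathfrak{G}$: this is where hypothesis~[B] is used, via the argument in the proof of Proposition~\ref{p:2}\ref{p:2ia}$\Rightarrow$\ref{p:2ib} showing that $\omega\mapsto\prox_{\mathsf{f}_{\omega}}(x(\omega))-\prox_{\mathsf{f}_{\omega}}(z(\omega))\in\GG$ for every $x\in\GG$ by firm nonexpansiveness, combined with [B]. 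Everything else is assembling Proposition~\ref{p:2}, Proposition~\ref{p:9}, Proposition~\ref{p:16}, and Lemma~\ref{l:3}; no genuinely new estimate is needed. One should also double-check that the measurability requirement in Definition~\ref{d:5} is met so that $f$ is well defined — this follows from [A] since $\prox_{\mathsf{f}_{\omega}}$ determines $\mathsf{f}_{\omega}$ up to an additive constant through the Moreau envelope, but stating it cleanly may take a line or two.
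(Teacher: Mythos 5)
Your overall architecture is the right one and matches the paper's: form $P=\leftindex^{\mathfrak{G}}{\int}_{\Omega}^{\oplus}\prox_{\mathsf{f}_{\omega}}\mu(d\omega)$, use hypotheses [A]--[B] with Proposition~\ref{p:2}\ref{p:2i} to see that $P\colon\HH\to\HH$ is nonexpansive, use [C]--[D] with Proposition~\ref{p:9} to build $L$ with $\norm{L}\leq 1$ and the adjoint formula, identify the right-hand side of \eqref{e:i5} as $L^*\circ P\circ L$, and invoke Lemma~\ref{l:3}\ref{l:3i}. The gap is in how you justify cyclic monotonicity. You route it through the function $f=\leftindex^{\mathfrak{G}}{\int}_{\Omega}^{\oplus}\mathsf{f}_{\omega}\mu(d\omega)$ and the identity $P=\prox_f$, but the hypotheses of the theorem give you no integrability control on the functions $(\mathsf{f}_{\omega})_{\omega\in\Omega}$ themselves --- only on their proximity operators. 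The statements ``$f$ is well defined,'' ``$f\in\Gamma_0(\HH)$,'' and ``$\prox_f=P$'' are the content of Theorem~\ref{t:2}, which requires the integrability conditions \ref{a:3c}--\ref{a:3d} of Assumption~\ref{a:3}; those are absent here and can genuinely fail. Concretely, in the setting of Example~\ref{ex:1}\ref{ex:1ii} with $\alpha_k=1$, $\mathsf{H}_k=\RR$, and $\mathsf{f}_k\equiv -1$ for all $k$, hypotheses [A]--[B] hold (every $\prox_{\mathsf{f}_k}$ is the identity), yet $\int_{\Omega}\mathsf{f}_{\omega}(x(\omega))\mu(d\omega)=\minf$ for every $x$, so $f\notin\Gamma_0(\HH)$ and ``$\prox_f$'' is meaningless. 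Your parenthetical justification (``$\prox_f=\partial\psi$ \ldots\ by Lemma~\ref{l:3}\ref{l:3i} since $\prox_f$ is nonexpansive and cyclically monotone'') is moreover circular: cyclic monotonicity of $P$ is exactly what you are trying to establish. The same objection applies to your closing remark that the measurability needed for Definition~\ref{d:5} ``takes a line or two'' --- in the paper that step is the Moreau-envelope approximation argument of Theorem~\ref{t:2}\ref{t:2i-} and it, too, leans on Assumption~\ref{a:3}\ref{a:3c}.

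The repair is short and is precisely what the paper does: bypass $f$ entirely and verify the cyclic inequality for $L^*\circ P\circ L$ by integration. For a cycle $\mathsf{z}_1,\dots,\mathsf{z}_{n+1}=\mathsf{z}_1$ in $\mathsf{G}$, each $\prox_{\mathsf{f}_{\omega}}$ is cyclically monotone on $\HW$ (Moreau, since $\mathsf{f}_{\omega}\in\Gamma_0(\HW)$), so for every $\omega$ one has
\begin{equation}
\sum_{k=1}^n\scal!{\LW\mathsf{z}_{k+1}-\LW\mathsf{z}_k}{
\prox_{\mathsf{f}_{\omega}}(\LW\mathsf{z}_k)}_{\HW}\leq 0,
\end{equation}
and integrating this pointwise inequality over $\Omega$ via \eqref{e:y9d2} yields
$\sum_{k=1}^n\scal{\mathsf{z}_{k+1}-\mathsf{z}_k}{L^*(P(L\mathsf{z}_k))}_{\mathsf{G}}\leq 0$ directly, with no reference to any global potential on $\HH$. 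With that substitution your proof is correct and coincides with the paper's.
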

\begin{proof}
Set $T=\leftindex^{\mathfrak{G}}{\int}_{\Omega}^{\oplus}
\prox_{\mathsf{f}_{\omega}}\mu(d\omega)$.
Then, on account of Proposition~\ref{p:2}\ref{p:2i},
$T\colon\HH\to\HH$ is nonexpansive. Next, items \ref{p:9i} and
\ref{p:9iv} of Proposition~\ref{p:9} ensure that the operator
$L\colon\mathsf{G}\to\HH\colon\mathsf{z}\mapsto
\mathfrak{e}_{\mathsf{L}}\mathsf{z}$ is well defined,
linear, and bounded, with $\norm{L}\leq 1$, and its adjoint
is given by
\begin{equation}
L^*\colon\HH\to\mathsf{G}\colon x^*\mapsto
\int_{\Omega}\LW^*\brk!{x^*(\omega)}\mu(d\omega).
\end{equation}
Hence, $L^*\circ T\circ L\colon\mathsf{G}\to\mathsf{G}$
is nonexpansive and
\begin{equation}
(\forall\mathsf{z}\in\mathsf{G})\quad
L^*\brk!{T(L\mathsf{z})}=\int_{\Omega}
\LW^*\brk!{\prox_{\mathsf{f}_{\omega}}(\LW\mathsf{z})}
\mu(d\omega).
\end{equation}
Therefore, in the light of Lemma~\ref{l:3}\ref{l:3i}, it remains
to show that $L^*\circ T\circ L$ is cyclically monotone.
Towards this end, let $2\leq n\in\NN$ and let
$(\mathsf{z}_1,\ldots,\mathsf{z}_{n+1})\in\mathsf{G}^{n+1}$
be such that $\mathsf{z}_{n+1}=\mathsf{z}_1$.
Then, appealing to the cyclic monotonicity of the operators
$(\prox_{\mathsf{f}_{\omega}})_{\omega\in\Omega}$,
\begin{equation}
(\forall\omega\in\Omega)\quad
\sum_{k=1}^n\scal!{\LW\mathsf{z}_{k+1}-\LW\mathsf{z}_k}{
\prox_{\mathsf{f}_{\omega}}(\LW\mathsf{z}_k)}_{\HW}\leq 0.
\end{equation}
Thus, it follows from \eqref{e:y9d2} that
\begin{align}
\sum_{k=1}^n\scal!{\mathsf{z}_{k+1}-\mathsf{z}_k}{
L^*\brk!{T(L\mathsf{z}_k)}}_{\mathsf{G}}
&=\sum_{k=1}^n\scal{L\mathsf{z}_{k+1}-L\mathsf{z}_k}{
T(L\mathsf{z}_k)}_{\HH}
\nonumber\\
&=\sum_{k=1}^n\int_{\Omega}
\scal!{\LW\mathsf{z}_{k+1}-\LW\mathsf{z}_k}{
\prox_{\mathsf{f}_{\omega}}(\LW\mathsf{z}_k)}_{\HW}\mu(d\omega)
\nonumber\\
&=\int_{\Omega}\sum_{k=1}^n
\scal!{\LW\mathsf{z}_{k+1}-\LW\mathsf{z}_k}{
\prox_{\mathsf{f}_{\omega}}(\LW\mathsf{z}_k)}_{\HW}\mu(d\omega)
\nonumber\\
&\leq 0,
\end{align}
which concludes the proof.
\end{proof}

\begin{remark}
\label{r:41}
Identifying the function $\mathsf{g}$ in \eqref{e:i5} is a natural 
question, which led to the introduction of the notion of integral 
proximal mixtures in \cite{Jota24}.
\end{remark}

\begin{proposition}
\label{p:18}
Suppose that Assumption~\ref{a:1} is in force and, for every
$\omega\in\Omega$, let
$\AW\colon\HW\to 2^{\HW}$ be maximally monotone.
Set
\begin{equation}
A=\leftindex^{\mathfrak{G}}{\int}_{\Omega}^{\oplus}
\AW\mu(d\omega).
\end{equation}
Then the following hold:
\begin{enumerate}
\item
\label{p:18i}
Suppose that there exists $f\in\Gamma_0(\HH)$ such that
$A=\partial f$. Then, for $\mu$-almost every $\omega\in\Omega$,
there exists $\mathsf{f}_{\omega}\in\Gamma_0(\HW)$ such that
$\AW=\partial\mathsf{f}_{\omega}$.
\item
\label{p:18ii}
Suppose that there exists a nonempty closed convex subset
$C$ of $\HH$ such that $A=N_C$. Then, for $\mu$-almost every
$\omega\in\Omega$, there exists a nonempty closed convex subset
$\mathsf{C}_{\omega}$ of $\HW$ such that
$\AW=N_{\mathsf{C}_{\omega}}$.
\end{enumerate}
\end{proposition}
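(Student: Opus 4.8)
The plan is to read off both assertions from the resolvent identity $J_{A}=\leftindex^{\mathfrak{G}}{\int}_{\Omega}^{\oplus}J_{\AW}\mu(d\omega)$ of Proposition~\ref{p:16}, combined with the characterizations of Lemma~\ref{l:3}. Consider \ref{p:18i}. Since $A=\partial f$ with $f\in\Gamma_{0}(\HH)$, the resolvent $J_{A}=\prox_{f}$ is defined on all of $\HH$ and, by Lemma~\ref{l:3}\ref{l:3i}, it is nonexpansive and cyclically monotone; moreover, since each $\AW$ is maximally monotone, each $J_{\AW}$ is firmly nonexpansive, hence nonexpansive and continuous. It therefore suffices to show that $J_{\AW}$ is cyclically monotone for $\mu$-almost every $\omega\in\Omega$: Lemma~\ref{l:3}\ref{l:3i} will then furnish $\mathsf{f}_{\omega}\in\Gamma_{0}(\HW)$ with $J_{\AW}=\prox_{\mathsf{f}_{\omega}}=J_{\partial\mathsf{f}_{\omega}}$, whence $\AW=\partial\mathsf{f}_{\omega}$ because a maximally monotone operator is determined by its resolvent.

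To carry out the transfer, fix the sequence $(u_{n})_{n\in\NN}$ in $\GG$ provided by Lemma~\ref{l:1}\ref{l:1v}, so that $\overline{\{u_{n}(\omega)\}_{n\in\NN}}=\HW$ for every $\omega\in\Omega$. Given an integer $n\geq 2$, a tuple $(i_{1},\dots,i_{n})\in\NN^{n}$, and a set $\Xi\in\FF$, put $x_{k}=1_{\Xi}u_{i_{k}}$ for $k\in\{1,\dots,n\}$ and $x_{n+1}=x_{1}$; by Lemma~\ref{l:1}\ref{l:1iii} and \ref{l:1v}, each $x_{k}$ lies in $\GG\subset\HH$. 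Applying the cyclic inequality of $\prox_{f}=J_{A}$ at $(x_{1},\dots,x_{n+1})$, expanding each scalar product by means of \eqref{e:y9d2}, using that $(J_{A}x_{k})(\omega)=J_{\AW}(x_{k}(\omega))$ for $\mu$-almost every $\omega$ by Proposition~\ref{p:16} and Definition~\ref{d:6}, and observing that the integrand vanishes on $\complement\Xi$, where every $x_{k}(\omega)=0$, we obtain
\[
\int_{\Xi}\sum_{k=1}^{n}\scal{u_{i_{k+1}}(\omega)-u_{i_{k}}(\omega)}{J_{\AW}\brk!{u_{i_{k}}(\omega)}}_{\HW}\mu(d\omega)\leq 0 .
\]
Here the integrand is $\FF$-measurable and $\mu$-integrable: the maps $\omega\mapsto J_{\AW}(u_{i_{k}}(\omega))$ are representatives of the elements $J_{A}u_{i_{k}}\in\HH$, hence lie in $\mathfrak{G}$ by Lemma~\ref{l:1}\ref{l:1ii}, so measurability follows from Lemma~\ref{l:1}\ref{l:1i} and integrability from the Cauchy--Schwarz inequality. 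Since $\Xi\in\FF$ is arbitrary, the integrand is $\leq 0$ for $\mu$-almost every $\omega$; taking the union of the corresponding $\mu$-null sets over the countable family of tuples $(n,i_{1},\dots,i_{n})$ produces a single $\mu$-null set $\Xi^{\star}$ such that, for every $\omega\in\complement\Xi^{\star}$, the cyclic inequality for $J_{\AW}$ holds at every finite cyclic tuple drawn from $\{u_{n}(\omega)\}_{n\in\NN}$.

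Now fix $\omega\in\complement\Xi^{\star}$ and an arbitrary cyclic tuple $(\mathsf{x}_{1},\dots,\mathsf{x}_{n+1})$ in $\HW$ with $\mathsf{x}_{n+1}=\mathsf{x}_{1}$; approximating each $\mathsf{x}_{k}$ by a sequence drawn from $\{u_{m}(\omega)\}_{m\in\NN}$, with the last index kept equal to the first, and passing to the limit via the continuity of $J_{\AW}$ shows that $J_{\AW}$ is cyclically monotone, which settles \ref{p:18i}. For \ref{p:18ii}, one argues identically with Lemma~\ref{l:3}\ref{l:3ii} in place of Lemma~\ref{l:3}\ref{l:3i}: here $A=N_{C}$ gives $J_{A}=\proj_{C}$, which satisfies $\scal{J_{A}y-J_{A}x}{x-J_{A}x}_{\HH}\leq 0$ for all $x,y\in\HH$; testing this at $x=1_{\Xi}u_{i}$ and $y=1_{\Xi}u_{j}$ (the integrand again vanishing off $\Xi$), letting $\Xi$ range over $\FF$, and taking a union of $\mu$-null sets over $(i,j)\in\NN^{2}$ yields a $\mu$-null set outside of which $\scal{J_{\AW}(u_{j}(\omega))-J_{\AW}(u_{i}(\omega))}{u_{i}(\omega)-J_{\AW}(u_{i}(\omega))}_{\HW}\leq 0$ for all $i,j\in\NN$; density of $\{u_{m}(\omega)\}_{m\in\NN}$ in $\HW$ together with the continuity of $J_{\AW}$ then extends this to all $\mathsf{x},\mathsf{y}\in\HW$, so Lemma~\ref{l:3}\ref{l:3ii} provides a nonempty closed convex set $\mathsf{C}_{\omega}\subset\HW$ with $J_{\AW}=\proj_{\mathsf{C}_{\omega}}=J_{N_{\mathsf{C}_{\omega}}}$, and therefore $\AW=N_{\mathsf{C}_{\omega}}$.

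The main obstacle is precisely this localization step: turning an inequality that holds \emph{globally} for $J_{A}$ on $\HH$ into one that holds for $\mu$-almost every $\omega$ \emph{at every point} of the fibre $\HW$. The two devices that make it work are multiplication by indicators $1_{\Xi}$, which keeps the test points inside $\HH$ while annihilating their contribution off $\Xi$ so that letting $\Xi$ vary over $\FF$ isolates the pointwise inequality, and the dense sequence $(u_{n}(\omega))_{n\in\NN}$ of Lemma~\ref{l:1}\ref{l:1v}, which, combined with the continuity of the resolvents, lets one pass from a countable set of test points to the whole of each $\HW$.
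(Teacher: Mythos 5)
Your proof is correct and follows essentially the same route as the paper's: the dense sequence from Lemma~\ref{l:1}\ref{l:1v}, indicator truncations $1_{\Xi}u_{i_k}$ to localize the cyclic (resp.\ Zarantonello) inequality of $J_A$ fibrewise via Proposition~\ref{p:16}, a countable union of null sets over index tuples, and density plus continuity of $J_{\AW}$ before invoking Lemma~\ref{l:3}. The only cosmetic differences are that the paper restricts the test sets to finite measure before invoking $\sigma$-finiteness (you instead justify integrability by Cauchy--Schwarz, which is equally valid) and that you write out the projection case \ref{p:18ii} that the paper dispatches with ``argue as in \ref{p:18i}''.
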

\begin{proof}
Lemma~\ref{l:1}\ref{l:1v} asserts that there exists
a sequence $(u_n)_{n\in\NN}$ in $\GG$ such that
\begin{equation}
\label{e:5umr}
(\forall\omega\in\Omega)\quad
\overline{\set!{u_n(\omega)}_{n\in\NN}}=\HW.
\end{equation}

\ref{p:18i}:
Set $\mathbb{I}=\menge{(i_k)_{1\leq k\leq n+1}\in\NN^{n+1}}{
2\leq n\in\NN\,\,\text{and}\,\,i_{n+1}=i_1}$, fix temporarily
$\boldsymbol{\mathrm{i}}=(i_k)_{1\leq k\leq n+1}\in\mathbb{I}$,
and let $\Theta\in\FF$ be such that $\mu(\Theta)<\pinf$.
Then, by Lemma~\ref{l:1}\ref{l:1iii},
$\set{1_{\Theta}u_{i_k}}_{1\leq k\leq n}\subset\GG$.
In turn, since $J_A\colon\HH\to\HH$, it follows from
Proposition~\ref{p:16} and Lemma~\ref{l:1}\ref{l:1ii} that,
for every $k\in\set{1,\ldots,n}$, a representative in
$\GG$ of $J_A(1_{\Theta}u_{i_k})$ is the mapping
\begin{equation}
\omega\mapsto
\begin{cases}
J_{\AW}\brk!{u_{i_k}(\omega)},
&\text{if}\,\,\omega\in\Theta;\\
J_{\AW}\mathsf{0},&\text{if}\,\,\omega\in\complement\Theta.
\end{cases}
\end{equation}
At the same time, for every $k\in\set{1,\ldots,n}$,
a representative in $\GG$ of $1_{\Theta}u_{i_k}$ is the mapping
\begin{equation}
\omega\mapsto
\begin{cases}
u_{i_k}(\omega),&\text{if}\,\,\omega\in\Theta;\\
\mathsf{0},&\text{if}\,\,\omega\in\complement\Theta.
\end{cases}
\end{equation}
Hence, since $J_A=\prox_f$ is cyclically monotone by virtue of
\cite[Example~23.3]{Livre1} and Lemma~\ref{l:3}, we derive
from \eqref{e:y9d2} that
\begin{equation}
\int_{\Theta}\sum_{k=1}^n\scal!{u_{i_{k+1}}(\omega)-
u_{i_k}(\omega)}{J_{\AW}\brk!{u_{i_k}(\omega)}}_{\HW}\mu(d\omega)
=\sum_{k=1}^n\scal{1_{\Theta}u_{i_{k+1}}-1_{\Theta}u_{i_k}}{
J_A(1_{\Theta}u_{i_k})}_{\HH}
\leq 0.
\end{equation}
Therefore, thanks to the fact that
$(\Omega,\FF,\mu)$ is $\sigma$-finite, there exists
$\Xi_{\boldsymbol{\mathrm{i}}}\in\FF$ such that
\begin{equation}
\label{e:6ewx}
\mu(\Xi_{\boldsymbol{\mathrm{i}}})=0
\quad\text{and}\quad
\brk!{\forall\omega\in\complement\Xi_{\boldsymbol{\mathrm{i}}}}
\,\,
\sum_{k=1}^n\scal!{u_{i_{k+1}}(\omega)-
u_{i_k}(\omega)}{J_{\AW}\brk!{u_{i_k}(\omega)}}_{\HW}
\leq 0.
\end{equation}
Now set $\Xi=\bigcup_{\boldsymbol{\mathrm{i}}\in\mathbb{I}}
\Xi_{\boldsymbol{\mathrm{i}}}$.
Since $\mathbb{I}$ is countable, $\Xi\in\FF$ and $\mu(\Xi)=0$.
Additionally, \eqref{e:6ewx} implies that
\begin{equation}
\label{e:kftj}
\brk!{\forall\boldsymbol{\mathrm{i}}=(i_k)_{1\leq k\leq n+1}
\in\mathbb{I}}
\brk!{\forall\omega\in\complement\Xi}
\,\,
\sum_{k=1}^n\scal!{u_{i_{k+1}}(\omega)-
u_{i_k}(\omega)}{J_{\AW}\brk!{u_{i_k}(\omega)}}_{\HW}
\leq 0.
\end{equation}
To proceed further, take $\omega\in\complement\Xi$,
let $2\leq n\in\NN$, and let
$(\mathsf{x}_1,\ldots,\mathsf{x}_{n+1})$ be a family
in $\HW$ such that $\mathsf{x}_{n+1}=\mathsf{x}_1$.
For every $k\in\set{1,\ldots,n}$, we infer from \eqref{e:5umr}
that there exists a sequence
$(i_{k,m})_{m\in\NN}$ in $\NN$ such that
$u_{i_{k,m}}(\omega)\to\mathsf{x}_k$.
Set $(\forall m\in\NN)$ $i_{n+1,m}=i_{1,m}$. Then, for every
$m\in\NN$, because $(i_{k,m})_{1\leq k\leq n+1}\in
\mathbb{I}$, it results from \eqref{e:kftj} that
\begin{equation}
\sum_{k=1}^n\scal!{u_{i_{k+1,m}}(\omega)-
u_{i_{k,m}}(\omega)}{
J_{\AW}\brk!{u_{i_{k,m}}(\omega)}}_{\HW}
\leq 0.
\end{equation}
Therefore, the continuity of $J_{\AW}$ forces
$\sum_{k=1}^n\scal{\mathsf{x}_{k+1}-\mathsf{x}_k}{
J_{\AW}\mathsf{x}_k}_{\HW}\leq 0$. Consequently,
since $J_{\AW}$ is nonexpansive, we conclude via
Lemma~\ref{l:3}\ref{l:3i} that there exists
$\mathsf{f}_{\omega}\in\Gamma_0(\HW)$ such that
$J_{\AW}=\prox_{\mathsf{f}_{\omega}}$.

\ref{p:18ii}:
Argue as in \ref{p:18i}.
\end{proof}

Let us collect the main properties of Hilbert direct integral
functions under the umbrella of the following assumption.

\begin{assumption}
\label{a:3}
Assumption~\ref{a:1} and the following are in force:
\begin{enumerate}[label={\normalfont[\Alph*]}]
\item
\label{a:3a}
For every $\omega\in\Omega$,
$\mathsf{f}_{\omega}\in\Gamma_0(\HW)$.
\item
\label{a:3b}
For every $x\in\GG$, the mapping
$\omega\mapsto\prox_{\mathsf{f}_{\omega}}(x(\omega))$ lies
in $\mathfrak{G}$.
\item
\label{a:3c}
There exists $r\in\GG$ such that the function
$\omega\mapsto\mathsf{f}_{\omega}(r(\omega))$
lies in $\mathscr{L}^1(\Omega,\FF,\mu;\RR)$.
\item
\label{a:3d}
There exist $s^*\in\GG$ and
$\vartheta\in\mathscr{L}^1(\Omega,\FF,\mu;\RR)$ such that
\begin{equation}
\label{e:30v2}
(\forallmu\omega\in\Omega)\quad\mathsf{f}_{\omega}\geq
\scal{\Cdot}{s^*(\omega)}_{\HW}+\vartheta(\omega).
\end{equation}
\end{enumerate}
\end{assumption}

The following theorem presents the main properties of Hilbert
direct integrals of convex functions. In the literature, such
properties are available only in the setting of
Examples~\ref{ex:1}\ref{ex:1i+} and \ref{ex:1}\ref{ex:1iii}; see
\cite{Livre1,Inte23,Cast77,Rock74}, where different techniques are
employed which are not applicable in our general context.

\begin{theorem}
\label{t:2}
Suppose that Assumption~\ref{a:3} is in force and define
\begin{equation}
\label{e:athb}
f=\leftindex^{\mathfrak{G}}{\int}_{\Omega}^{\oplus}
\mathsf{f}_{\omega}\mu(d\omega).
\end{equation}
Then the following hold:
\begin{enumerate}
\item
\label{t:2i-}
$f$ is well defined.
\item
\label{t:2i}
$f\in\Gamma_0(\HH)$.
\item
\label{t:2ii}
$\partial f=\leftindex^{\mathfrak{G}}{\Int}_{\Omega}^{\oplus}
\partial\mathsf{f}_{\omega}\mu(d\omega)$.
\item
\label{t:2iii}
Let $\gamma\in\RPP$ and $x\in\GG$. Then the mapping $\omega\mapsto
\prox_{\gamma\mathsf{f}_{\omega}}(x(\omega))$ lies in $\GG$
and $\prox_{\gamma f}=
\leftindex^{\mathfrak{G}}{\Int}_{\Omega}^{\oplus}
\prox_{\gamma\mathsf{f}_{\omega}}\mu(d\omega)$.
\item
\label{t:2iv}
$\cdom f=\leftindex^{\mathfrak{G}}{\Int}_{\Omega}^{\oplus}
\cdom\mathsf{f}_{\omega}\,\mu(d\omega)
=\overline{\leftindex^{\mathfrak{G}}{\Int}_{\Omega}^{\oplus}
\dom\mathsf{f}_{\omega}\,\mu(d\omega)}$.
\item
\label{t:2v}
$\Argmin f=\leftindex^{\mathfrak{G}}{\Int}_{\Omega}^{\oplus}
\Argmin\mathsf{f}_{\omega}\,\mu(d\omega)$.
\item
\label{t:2vi}
Let $\beta\in\RP$ and suppose that, for every $\omega\in\Omega$,
$\dom\mathsf{f}_{\omega}=\HW$ and
$\mathsf{f}_{\omega}$ is G\^ateaux differentiable on
$\HW$. Then the following are equivalent:
\begin{enumerate}
\item
For $\mu$-almost every $\omega\in\Omega$,
$\nabla\mathsf{f}_{\omega}$ is $\beta$-Lipschitzian.
\item
$\dom f=\HH$, $f$ is Fr\'echet differentiable, and $\nabla f$ is
$\beta$-Lipschitzian.
\end{enumerate}
\item
\label{t:2vii}
Let $\gamma\in\RPP$. Then $\moyo{f}{\gamma}=
\leftindex^{\mathfrak{G}}{\Int}_{\Omega}^{\oplus}
\moyo{\mathsf{f}_{\omega}}{\gamma}\mu(d\omega)$.
\item
\label{t:2viii}
$f^*=\leftindex^{\mathfrak{G}}{\Int}_{\Omega}^{\oplus}
\mathsf{f}_{\omega}^*\mu(d\omega)$.
\item
\label{t:2ix}
$\rec f=\leftindex^{\mathfrak{G}}{\Int}_{\Omega}^{\oplus}
\rec\mathsf{f}_{\omega}\,\mu(d\omega)$.
\end{enumerate}
\end{theorem}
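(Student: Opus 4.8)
The plan is to establish \ref{t:2i-}--\ref{t:2ii} first, since the remaining items will follow by combining the subdifferential identity \ref{t:2ii} with the results of Section~\ref{sec:3} and standard convex analysis. The main obstacle is \ref{t:2i-}: verifying the measurability hypothesis built into Definition~\ref{d:5}, i.e.\ that $\omega\mapsto\mathsf{f}_{\omega}(x(\omega))$ is $\FF$-measurable for every $x\in\GG$. I would do this in three steps. First, a resolvent-identity bootstrap upgrades Assumption~\ref{a:3}\ref{a:3b} (which concerns the index $\gamma=1$) to the assertion that, for every $\gamma\in\RPP$ and every $y\in\mathfrak{G}$, the mapping $\omega\mapsto\prox_{\gamma\mathsf{f}_{\omega}}(y(\omega))=J_{\gamma\partial\mathsf{f}_{\omega}}(y(\omega))$ lies in $\mathfrak{G}$; indeed, $\prox_{\gamma\mathsf{f}_{\omega}}(y)$ is the fixed point of the contraction $q\mapsto J_{\lambda\partial\mathsf{f}_{\omega}}(\lambda\gamma^{-1}y+(1-\lambda\gamma^{-1})q)$ whenever $0<\lambda<2\gamma$, and $\mathfrak{G}$ is stable under pointwise weak sequential limits (Lemma~\ref{l:1}\ref{l:1iv}), linear operations, and multiplication by $\FF$-measurable scalars (Lemma~\ref{l:1}\ref{l:1iii}), so a Picard iteration, bootstrapped finitely many times from $\lambda=1$, delivers the claim for every $\gamma\in\RPP$. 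Second, the Moreau envelope is finite-valued with gradient $\nabla\moyo{\mathsf{f}_{\omega}}{\gamma}=\gamma^{-1}(\Id_{\HW}-J_{\gamma\mathsf{f}_{\omega}})$, and recognizing $\sigma\mapsto\moyo{\mathsf{f}_{\omega}}{1/\sigma}(r(\omega))$ as a concave function of $\sigma$ and integrating its derivative (with the value at $\sigma=0$ supplied by Assumption~\ref{a:3}\ref{a:3c}) gives the identity
\[
\moyo{\mathsf{f}_{\omega}}{\gamma}(r(\omega))=\mathsf{f}_{\omega}(r(\omega))
-\int_0^{\gamma}\frac{1}{2t^2}
\norm{r(\omega)-\prox_{t\mathsf{f}_{\omega}}(r(\omega))}_{\HW}^2\,dt;
\]
since $\mathsf{f}_{\omega}(r(\omega))$ is measurable by \ref{a:3c} and the integrand is continuous in $t$ and $\FF$-measurable in $\omega$ by the first step, Tonelli's theorem makes $\omega\mapsto\moyo{\mathsf{f}_{\omega}}{\gamma}(r(\omega))$ $\FF$-measurable, and integrating $\nabla\moyo{\mathsf{f}_{\omega}}{\gamma}$ along $[r(\omega),x(\omega)]$ (again via the first step and Fubini) extends this to $\omega\mapsto\moyo{\mathsf{f}_{\omega}}{\gamma}(x(\omega))$ for arbitrary $x\in\GG$. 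Third, $\mathsf{f}_{\omega}(x(\omega))=\sup_{n\in\NN}\moyo{\mathsf{f}_{\omega}}{1/n}(x(\omega))$ exhibits $\omega\mapsto\mathsf{f}_{\omega}(x(\omega))$ as a countable supremum of $\FF$-measurable functions.

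Granting \ref{t:2i-}, item~\ref{t:2i} is quick: convexity is inherited pointwise and preserved by integration; properness holds because $f(r)<\pinf$ by \ref{a:3c} whereas $f\geq\scal{\Cdot}{s^*}_{\HH}+\int_{\Omega}\vartheta\,d\mu>\minf$ by \ref{a:3d} and the Cauchy--Schwarz inequality; and lower semicontinuity follows by passing, via Lemma~\ref{l:2}, to a subsequence converging $\mae$, using the lower semicontinuity of each $\mathsf{f}_{\omega}$, and applying Fatou's lemma after subtracting the $\mu$-integrable affine minorant of \ref{a:3d}. For \ref{t:2ii}, I would first verify that $(\partial\mathsf{f}_{\omega})_{\omega\in\Omega}$ satisfies Assumption~\ref{a:2}: maximal monotonicity is immediate, the relation $J_{\partial\mathsf{f}_{\omega}}=\prox_{\mathsf{f}_{\omega}}$ turns \ref{a:2b} into \ref{a:3b}, and the estimate
\[
\tfrac12\norm{r(\omega)-\prox_{\mathsf{f}_{\omega}}(r(\omega))}_{\HW}^2
\leq\moyo{\mathsf{f}_{\omega}}{1}(r(\omega))
-\scal{\prox_{\mathsf{f}_{\omega}}(r(\omega))}{s^*(\omega)}_{\HW}-\vartheta(\omega)
\leq\mathsf{f}_{\omega}(r(\omega))
-\scal{\prox_{\mathsf{f}_{\omega}}(r(\omega))}{s^*(\omega)}_{\HW}-\vartheta(\omega),
\]
combined with \ref{a:3c}--\ref{a:3d} and Cauchy--Schwarz, shows that $\omega\mapsto\prox_{\mathsf{f}_{\omega}}(r(\omega))$ lies in $\GG$ and thus exhibits a point of the domain. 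Therefore Theorem~\ref{t:1}\ref{t:1i} makes $B:=\leftindex^{\mathfrak{G}}{\int}_{\Omega}^{\oplus}\partial\mathsf{f}_{\omega}\,\mu(d\omega)$ maximally monotone, while integrating the pointwise subgradient inequalities (legitimate by \ref{a:3d} and the integral convention of Definition~\ref{d:5}) yields $\gra B\subset\gra\partial f$; maximality of $B$ forces $\partial f=B$.

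The remaining items then follow by applying Section~\ref{sec:3} to $A=\partial f=B$. Item~\ref{t:2iii} is Theorem~\ref{t:1}\ref{t:1iia} read through $\prox_{\gamma f}=J_{\gamma\partial f}$ and $\prox_{\gamma\mathsf{f}_{\omega}}=J_{\gamma\partial\mathsf{f}_{\omega}}$. Item~\ref{t:2iv} combines $\cdom f=\cdom\partial f$, Theorem~\ref{t:1}\ref{t:1iii}, $\cdom\partial\mathsf{f}_{\omega}=\cdom\mathsf{f}_{\omega}$, and the inclusions $\leftindex^{\mathfrak{G}}{\int}_{\Omega}^{\oplus}\dom\partial\mathsf{f}_{\omega}\,\mu(d\omega)\subset\leftindex^{\mathfrak{G}}{\int}_{\Omega}^{\oplus}\dom\mathsf{f}_{\omega}\,\mu(d\omega)\subset\leftindex^{\mathfrak{G}}{\int}_{\Omega}^{\oplus}\cdom\mathsf{f}_{\omega}\,\mu(d\omega)$, the last set being closed. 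Item~\ref{t:2v} uses $\Argmin f=\zer\partial f$, $\zer\partial\mathsf{f}_{\omega}=\Argmin\mathsf{f}_{\omega}$, and Proposition~\ref{p:1}\ref{p:1iii}. Item~\ref{t:2vi} follows from \ref{t:2ii} and Proposition~\ref{p:2}\ref{p:2i} applied to $\TW=\nabla\mathsf{f}_{\omega}$, whose $\FF$-measurability in $\omega$ is read off the representation $\nabla\mathsf{f}_{\omega}=\lim_n\moyo{(\partial\mathsf{f}_{\omega})}{1/n}$ and whose square-integrable minorization at a suitable point is obtained, in the nontrivial implication, from the descent inequality and \ref{a:3d}, and, in the converse implication, from $\dom\partial f=\HH$; one then upgrades a Lipschitzian G\^ateaux gradient to Fr\'echet differentiability in the usual way. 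Item~\ref{t:2vii} evaluates $\moyo{f}{\gamma}(x)=f(\prox_{\gamma f}x)+(2\gamma)^{-1}\norm{x-\prox_{\gamma f}x}_{\HH}^2$ using \ref{t:2iii}, integrability of $\omega\mapsto\mathsf{f}_{\omega}(\prox_{\gamma\mathsf{f}_{\omega}}(x(\omega)))$ being secured by the two-sided bound $\scal{\Cdot}{s^*(\omega)}_{\HW}+\vartheta(\omega)\leq\moyo{\mathsf{f}_{\omega}}{\gamma}(x(\omega))\leq\mathsf{f}_{\omega}(r(\omega))+(2\gamma)^{-1}\norm{x(\omega)-r(\omega)}_{\HW}^2$. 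Item~\ref{t:2viii} is obtained by checking that $(\mathsf{f}_{\omega}^*)_{\omega\in\Omega}$ again satisfies Assumption~\ref{a:3} (with $\prox_{\mathsf{f}_{\omega}^*}=\Id_{\HW}-\prox_{\mathsf{f}_{\omega}}$ for \ref{a:3b}, $s^*$ as the point in \ref{a:3c}, and $r$ as the slope in \ref{a:3d}), so that $g:=\leftindex^{\mathfrak{G}}{\int}_{\Omega}^{\oplus}\mathsf{f}_{\omega}^*\,\mu(d\omega)\in\Gamma_0(\HH)$ with $\partial g=(\partial f)^{-1}=\partial(f^*)$ by \ref{t:2ii} and Proposition~\ref{p:1}\ref{p:1iv}, the additive constant being forced to zero by the Fenchel equality evaluated at any element of $\gra\partial f$. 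Finally, item~\ref{t:2ix} follows from $\rec f(y)=\sup_{\lambda\in\RPP}\lambda^{-1}(f(r+\lambda y)-f(r))$ and the monotone convergence theorem applied to the nondecreasing difference quotients $\omega\mapsto\lambda^{-1}(\mathsf{f}_{\omega}(r(\omega)+\lambda y(\omega))-\mathsf{f}_{\omega}(r(\omega)))$, which are $\mu$-integrably bounded below by \ref{a:3c}--\ref{a:3d}.
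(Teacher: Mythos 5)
Your overall architecture---establish \ref{t:2i-}, \ref{t:2i}, \ref{t:2ii} first, then read the remaining items off Section~\ref{sec:3} applied to $\partial f=\leftindex^{\mathfrak{G}}{\int}_{\Omega}^{\oplus}\partial\mathsf{f}_{\omega}\mu(d\omega)$---is exactly the paper's, and items \ref{t:2i}--\ref{t:2vii} and \ref{t:2ix} are handled essentially as in the paper. The one genuine gap is in the first step of your proof of \ref{t:2i-}. Assumption~\ref{a:3}\ref{a:3b} only asserts that $\omega\mapsto\prox_{\mathsf{f}_{\omega}}(y(\omega))$ lies in $\mathfrak{G}$ when $y\in\GG$. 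In your Picard iteration, the argument fed to the contraction after the first step is $\lambda\gamma^{-1}y+(1-\lambda\gamma^{-1})q_k$ with $q_k$ known only to lie in $\mathfrak{G}$ (that is all \ref{a:3}\ref{a:3b} returns), not in $\GG$; and the later stages of the bootstrap apply the resolvent at indices $\lambda\neq 1$, for which no hypothesis is available even on $\GG$. Since reaching the indices $\gamma=1/n$ that you need in your third step forces $\lambda<2\gamma<1$, the iteration does not close as written. It can be repaired: first extend \ref{a:3}\ref{a:3b} from $\GG$ to $\mathfrak{G}$ by truncating $y\in\mathfrak{G}$ to $1_{\Xi_n}y\in\GG$ on sets $\Xi_n\uparrow\Omega$ of finite measure on which $\norm{y(\omega)}_{\HW}$ is bounded (Lemma~\ref{l:1}\ref{l:1iii}) and passing to the pointwise limit via Lemma~\ref{l:1}\ref{l:1iv}. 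Alternatively---and this is what the paper does---verify Assumption~\ref{a:2} for $(\partial\mathsf{f}_{\omega})_{\omega\in\Omega}$ \emph{before} proving \ref{t:2i-} (you already assemble all the needed ingredients inside your proof of \ref{t:2ii}), then invoke Theorem~\ref{t:1}\ref{t:1iia} and Proposition~\ref{p:16} to see that $\omega\mapsto\prox_{\gamma\mathsf{f}_{\omega}}(x(\omega))$ is $\mu$-a.e.\ equal to a representative in $\GG$ of $J_{\gamma A}x$, and conclude with Lemma~\ref{l:1}\ref{l:1ii}; this avoids the bootstrap entirely. Your remaining steps for \ref{t:2i-} (joint measurability of the line integrand, Tonelli, monotone limit of Moreau envelopes) are sound and coincide with the paper's, up to your extra anchoring of $\moyo{\mathsf{f}_{\omega}}{\gamma}(r(\omega))$ through the integral in $t$, which the paper sidesteps by working with the differences $\moyo{\mathsf{f}_{\omega}}{\gamma}(x(\omega))-\moyo{\mathsf{f}_{\omega}}{\gamma}(r(\omega))$ only.

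For \ref{t:2viii} you take a genuinely different route: the paper computes $f^*(x^*)$ directly as an increasing limit of Moreau envelopes of $\mathsf{f}_{\omega}^*$ via Moreau decomposition and Beppo Levi, whereas you verify Assumption~\ref{a:3} for the conjugate family and identify $\leftindex^{\mathfrak{G}}{\int}_{\Omega}^{\oplus}\mathsf{f}_{\omega}^*\mu(d\omega)$ with $f^*$ through their common subdifferential plus a normalization by the Fenchel identity. This is a clean argument and your choices of data ($\prox_{\mathsf{f}_{\omega}^*}=\Id_{\HW}-\prox_{\mathsf{f}_{\omega}}$, the point $s^*$, the slope $r$) are correct; note, however, that checking \ref{a:3}\ref{a:3c} for the conjugates requires the $\FF$-measurability of $\omega\mapsto\mathsf{f}_{\omega}^*(s^*(\omega))$, which you do not supply and which is most easily obtained from the very Moreau-envelope limit the paper uses---so the alternative route does not fully dispense with that computation, it only relocates it.
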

\begin{proof}
According to \eqref{e:30v2}, there exists $\Xi\in\FF$ such that
\begin{equation}
\label{e:z5n1}
\mu(\Xi)=0
\end{equation}
and
\begin{equation}
\label{e:z9gx}
(\forall x\in\mathfrak{G})
\brk!{\forall\omega\in\complement\Xi}\quad
\mathsf{f}_{\omega}\brk!{x(\omega)}
\geq\scal{x(\omega)}{s^*(\omega)}_{\HW}+\vartheta(\omega).
\end{equation}
Let us define
\begin{equation}
p\colon\omega\mapsto
\prox_{\mathsf{f}_{\omega}}\brk!{r(\omega)+s^*(\omega)}.
\end{equation}
Since $r+s^*\in\GG$, Assumption~\ref{a:3}\ref{a:3b} ensures that
$p\in\mathfrak{G}$. In addition, we deduce from
\cite[Proposition~16.44]{Livre1} that
\begin{equation}
\label{e:fnu5}
\brk{\forall\omega\in\Omega}\quad
r(\omega)+s^*(\omega)-p(\omega)\in
\partial\mathsf{f}_{\omega}\brk!{p(\omega)}
\end{equation}
and, in turn, from \eqref{e:subdiff} and \eqref{e:z9gx} that
\begin{align}
\label{e:awml}
&\brk!{\forall\omega\in\complement\Xi}\quad
\mathsf{f}_{\omega}\brk!{r(\omega)}
-\scal{r(\omega)}{s^*(\omega)}_{\HW}
\nonumber\\
&\hskip 26mm
\geq\mathsf{f}_{\omega}\brk!{p(\omega)}+
\scal!{r(\omega)-p(\omega)}{
r(\omega)+s^*(\omega)-p(\omega)}_{\HW}-
\scal{r(\omega)}{s^*(\omega)}_{\HW}
\nonumber\\
&\hskip 26mm
=\mathsf{f}_{\omega}\brk!{p(\omega)}-
\scal{p(\omega)}{s^*(\omega)}_{\HW}+
\norm{r(\omega)-p(\omega)}_{\HW}^2
\nonumber\\
&\hskip 26mm
\geq\vartheta(\omega)+\norm{r(\omega)-p(\omega)}_{\HW}^2.
\end{align}
On the other hand, thanks to items
\ref{a:3c} and \ref{a:3d} in Assumption~\ref{a:3},
the function $\omega\mapsto\mathsf{f}_{\omega}(r(\omega))
-\scal{r(\omega)}{s^*(\omega)}_{\HW}
-\vartheta(\omega)$ lies in $\mathscr{L}^1(\Omega,\FF,\mu;\RR)$.
Therefore, it results from \eqref{e:awml} that $r-p\in\GG$ and,
since $r\in\GG$ by Assumption~\ref{a:3}\ref{a:3c}, we get
\begin{equation}
\label{e:u5ha}
p\in\GG.
\end{equation}
Now set
\begin{equation}
A=\leftindex^{\mathfrak{G}}{\int}_{\Omega}^{\oplus}
\partial\mathsf{f}_{\omega}\mu(d\omega).
\end{equation}
Assumption~\ref{a:3}\ref{a:3a} and 
\cite[Proposition~12.b]{More65} imply that the operators
$(\partial\mathsf{f}_{\omega})_{\omega\in\Omega}$ are maximally
monotone. Moreover, since $r+s^*\in\GG$,
we infer from \eqref{e:fnu5} and \eqref{e:u5ha} that $p\in\dom A$.
Moreover, Assumption~\ref{a:3}\ref{a:3b} and
\cite[Example~23.3]{Livre1} guarantee that, for every $x\in\GG$,
the mapping $\omega\mapsto
J_{\partial\mathsf{f}_{\omega}}(x(\omega))$ lies in
$\mathfrak{G}$. Altogether,
\begin{equation}
\label{e:k6sn}
\text{the family $(\partial\mathsf{f}_{\omega})_{\omega\in\Omega}$
satisfies the assumption of Theorem~\ref{t:1}}.
\end{equation}
Hence, it follows from Theorem~\ref{t:1}\ref{t:1i} that
\begin{equation}
\label{e:x7v9}
\text{$A$ is maximally monotone}
\end{equation}
and from Theorem~\ref{t:1}\ref{t:1iia} and
\cite[Example~23.3]{Livre1} that
\begin{equation}
\label{e:grde}
\brk!{\forall\gamma\in\RPP}(\forall x\in\GG)\quad
\text{the mapping}\,\,
\omega\mapsto\prox_{\gamma\mathsf{f}_{\omega}}\brk!{x(\omega)}
\,\,\text{lies in}\,\,\GG.
\end{equation}

\ref{t:2i-}:
We must show that, for every $x\in\GG$,
the function $\Omega\to\RX\colon\omega\mapsto
\mathsf{f}_{\omega}(x(\omega))$ is $\FF$-measurable.
To do so, we employ a Moreau envelope approximation technique
inspired by \cite{Atto79}. Take $x\in\GG$. For every
$\gamma\in\RPP$, let $\Psi_{\gamma}$ be the mapping defined on
$\intv{0}{1}\times\Omega$ by
\begin{equation}
\brk!{\forall(t,\omega)\in\intv{0}{1}\times\Omega}\quad
\Psi_{\gamma}(t,\omega)=r(\omega)+t\brk!{x(\omega)-r(\omega)}
-\prox_{\gamma\mathsf{f}_{\omega}}\brk!{
r(\omega)+t\brk!{x(\omega)-r(\omega)}}
\end{equation}
and define
\begin{equation}
\label{e:cqya}
\phi_{\gamma}\colon\intv{0}{1}\times\Omega\to\RR\colon
(t,\omega)\mapsto
\scal!{x(\omega)-r(\omega)}{\Psi_{\gamma}(t,\omega)
}_{\HW}.
\end{equation}
Then, for every $\gamma\in\RPP$, the continuity of the mappings
$(\Psi_{\gamma}(\Cdot,\omega))_{\omega\in\Omega}$
ensures that the functions
$(\phi_{\gamma}(\Cdot,\omega))_{\omega\in\Omega}$ are continuous,
while \eqref{e:grde} and Lemma~\ref{l:1}\ref{l:1i} ensure that the
functions $(\phi_{\gamma}(t,\Cdot))_{t\in\intv{0}{1}}$
are $\FF$-measurable. Hence, the functions
$(\phi_{\gamma})_{\gamma\in\RPP}$ are
$\BE_{\intv{0}{1}}\otimes\FF$-measurable
\cite[Lemma~III.14]{Cast77}. In turn, invoking the fact that
$(\Omega,\FF,\mu)$ is $\sigma$-finite, we deduce that,
for every $\gamma\in\RPP$, the function $\Omega\to\RR\colon
\omega\mapsto\int_0^1\phi_{\gamma}(t,\omega)dt$
is $\FF$-measurable. Therefore, for every $\gamma\in\RPP$,
since \cite[Proposition~12.30]{Livre1} implies that
\begin{equation}
(\forall\omega\in\Omega)\quad
\moyo{\mathsf{f}_{\omega}}{\gamma}\brk!{x(\omega)}-
\moyo{\mathsf{f}_{\omega}}{\gamma}\brk!{r(\omega)}
=\gamma^{-1}\int_0^1\scal!{x(\omega)-r(\omega)}{
\Psi_{\gamma}(t,\omega)}_{\HW}dt
=\gamma^{-1}\int_0^1\phi_{\gamma}(t,\omega)dt,
\end{equation}
we infer that the function
$\Omega\to\RR\colon\omega\mapsto
\moyo{\mathsf{f}_{\omega}}{\gamma}(x(\omega))-
\moyo{\mathsf{f}_{\omega}}{\gamma}(r(\omega))$
is $\FF$-measurable. However,
\cite[Proposition~12.33(ii)]{Livre1}
and Assumption~\ref{a:3}\ref{a:3c} give
\begin{equation}
(\forall\omega\in\Omega)\quad
\mathsf{f}_{\omega}\brk!{x(\omega)}-
\mathsf{f}_{\omega}\brk!{r(\omega)}=
\lim_{\gamma\downarrow 0}\brk2{
\moyo{\mathsf{f}_{\omega}}{\gamma}\brk!{x(\omega)}-
\moyo{\mathsf{f}_{\omega}}{\gamma}\brk!{r(\omega)}}.
\end{equation}
Hence, the function
$\Omega\to\RX\colon\omega\mapsto
\mathsf{f}_{\omega}(x(\omega))-
\mathsf{f}_{\omega}(r(\omega))$ is $\FF$-measurable.
Consequently, invoking Assumption~\ref{a:3}\ref{a:3c} once more,
we conclude that the function $\Omega\to\RX\colon\omega\mapsto
\mathsf{f}_{\omega}(x(\omega))$ is $\FF$-measurable.

\ref{t:2i}:
By \eqref{e:z9gx}, \eqref{e:z5n1}, and
Assumption~\ref{a:3}\ref{a:3d},
\begin{equation}
\brk{\forall x\in\HH}\quad
f(x)
=\int_{\Omega}\mathsf{f}_{\omega}\brk!{x(\omega)}\mu(d\omega)
\geq\int_{\Omega}
\scal{x(\omega)}{s^*(\omega)}_{\HW}\mu(d\omega)+
\int_{\Omega}\vartheta(\omega)\mu(d\omega)
>\minf,
\end{equation}
which yields
\begin{equation}
\minf\notin f(\HH).
\end{equation}
At the same time, since the functions
$(\mathsf{f}_{\omega})_{\omega\in\Omega}$ are convex by
Assumption~\ref{a:3}\ref{a:3a}, so is $f$.
Moreover, Assumption~\ref{a:3}\ref{a:3c} implies that
$\dom f\neq\emp$. Therefore, it remains to show that $f$ is lower
semicontinuous. Take $\xi\in\RR$, let $(x_n)_{n\in\NN}$ be a
sequence in $\HH$, let $x\in\HH$, and suppose that
\begin{equation}
\label{e:yf8g}
\sup_{n\in\NN}f(x_n)\leq\xi\;
\quad\text{and}\quad
x_n\to x.
\end{equation}
Then Lemma~\ref{l:2} asserts that there exists a strictly
increasing sequence $(k_n)_{n\in\NN}$ in $\NN$ such that
$(\forallmu\omega\in\Omega)$ $x_{k_n}(\omega)\to x(\omega)$.
Let us define
\begin{equation}
(\forall n\in\NN)\quad
\varrho_n\colon\Omega\to\RX\colon
\omega\mapsto\mathsf{f}_{\omega}\brk!{x_{k_n}(\omega)}
-\scal{x_{k_n}(\omega)}{s^*(\omega)}_{\HW}.
\end{equation}
By \ref{t:2i-} and Lemma~\ref{l:1}\ref{l:1i}, the functions
$(\varrho_n)_{n\in\NN}$ are $\FF$-measurable. Additionally,
\begin{equation}
(\forall n\in\NN)\quad
\varrho_n\geq\vartheta\,\,\mae\quad\text{and}\quad
\int_{\Omega}\varrho_n(\omega)\mu(d\omega)=
f(x_{k_n})-\scal{x_{k_n}}{s^*}_{\HH},
\end{equation}
and, since the functions $(\mathsf{f}_{\omega})_{\omega\in\Omega}$
are lower semicontinuous, $(\forallmu\omega\in\Omega)$
$\mathsf{f}_{\omega}(x(\omega))-
\scal{x(\omega)}{s^*(\omega)}_{\HW}
\leq\varliminf\varrho_n(\omega)$.
Thus, we derive from Fatou's lemma and \eqref{e:yf8g} that
\begin{align}
f(x)-\scal{x}{s^*}_{\HH}
&=\int_{\Omega}\brk2{\mathsf{f}_{\omega}\brk!{x(\omega)}
-\scal{x(\omega)}{s^*(\omega)}_{\HW}}\mu(d\omega)
\nonumber\\
&\leq\int_{\Omega}\varliminf\varrho_n(\omega)\,\mu(d\omega)
\nonumber\\
&\leq\varliminf\int_{\Omega}
\varrho_n(\omega)\mu(d\omega)
\nonumber\\
&=\varliminf\brk!{f(x_{k_n})-\scal{x_{k_n}}{s^*}_{\HH}}
\nonumber\\
&\leq\xi-\scal{x}{s^*}_{\HH}.
\end{align}
Hence $f(x)\leq\xi$ and we conclude via \cite[Lemma~1.24]{Livre1}
that $f$ is lower semicontinuous.

\ref{t:2ii}:
Let $(x,x^*)\in\gra A$ and let $\Theta\in\FF$ be such that
$\mu(\Theta)=0$ and $(\forall\omega\in\complement\Theta)$
$x^*(\omega)\in\partial\mathsf{f}_{\omega}(x(\omega))$.
For every $y\in\HH$, thanks to the inequalities
\begin{equation}
\brk!{\forall\omega\in\complement\Theta}\quad
\scal!{y(\omega)-x(\omega)}{x^*(\omega)}_{\HW}
+\mathsf{f}_{\omega}\brk!{x(\omega)}\leq
\mathsf{f}_{\omega}\brk!{y(\omega)},
\end{equation}
we obtain $\scal{y-x}{x^*}_{\HH}+f(x)\leq f(y)$. Hence,
$(x,x^*)\in\gra\partial f$ and we thus have
$\gra A\subset\gra\partial f$.
Consequently, the monotonicity of $\partial f$ and
\eqref{e:x7v9} force $\partial f=A$.

\ref{t:2iii}:
Combine \ref{t:2i}, \cite[Example~23.3]{Livre1}, \ref{t:2ii},
\eqref{e:k6sn}, and Theorem~\ref{t:1}\ref{t:1iia}.

\ref{t:2iv}:
We derive from
\ref{t:2i},
\cite[Proposition~16.38]{Livre1},
\ref{t:2ii},
\eqref{e:k6sn}, and Theorem~\ref{t:1}\ref{t:1iii} that
\begin{equation}
\cdom f
=\cdom\partial f
=\leftindex^{\mathfrak{G}}{\Int}_{\Omega}^{\oplus}
\cdom\partial\mathsf{f}_{\omega}\,\mu(d\omega)
=\leftindex^{\mathfrak{G}}{\Int}_{\Omega}^{\oplus}
\cdom\mathsf{f}_{\omega}\,\mu(d\omega).
\end{equation}
This shows that
$\leftindex^{\mathfrak{G}}{\int}_{\Omega}^{\oplus}
\cdom\mathsf{f}_{\omega}\,\mu(d\omega)$ is a closed subset of
$\HH$. On the other hand, for every $x\in\dom f$,
it results from Definition~\ref{d:5} that,
for $\mu$-almost every $\omega\in\Omega$,
$x(\omega)\in\dom\mathsf{f}_{\omega}$ and, therefore, that
$x\in\leftindex^{\mathfrak{G}}{\int}_{\Omega}^{\oplus}
\dom\mathsf{f}_{\omega}\,\mu(d\omega)$. Consequently,
\begin{equation}
\cdom f
\subset\overline{\leftindex^{\mathfrak{G}}{\int}_{\Omega}^{\oplus}
\dom\mathsf{f}_{\omega}\,\mu(d\omega)}
\subset
\leftindex^{\mathfrak{G}}{\Int}_{\Omega}^{\oplus}
\cdom\mathsf{f}_{\omega}\,\mu(d\omega)
=\cdom f,
\end{equation}
which yields the desired identities.

\ref{t:2v}:
This follows from Fermat's rule, \ref{t:2ii}, and
Proposition~\ref{p:1}\ref{p:1iii}.

\ref{t:2vi}:
Appealing to \eqref{e:k6sn}, we deduce from
Theorem~\ref{t:1}\ref{t:1va} and
\cite[Proposition~17.31(i)]{Livre1} that, for every $x\in\GG$,
the mapping $\omega\mapsto\moyo{(\partial\mathsf{f}_{\omega})}{0}
(x(\omega))=\nabla\mathsf{f}_{\omega}(x(\omega))$ lies in
$\mathfrak{G}$. In addition, by \ref{t:2ii},
\begin{equation}
\partial f=\leftindex^{\mathfrak{G}}{\Int}_{\Omega}^{\oplus}
\nabla\mathsf{f}_{\omega}\,\mu(d\omega).
\end{equation}
Furthermore, for every $\omega\in\Omega$,
\cite[Corollary~17.40]{Livre1} asserts that
$\nabla\mathsf{f}_{\omega}\colon\HW\to\HW$ is strong-to-weak
continuous. Consequently, in the light of
\cite[Proposition~17.41]{Livre1},
the assertion follows from Proposition~\ref{p:2}\ref{p:2i} applied
to the operators $(\nabla\mathsf{f}_{\omega})_{\omega\in\Omega}$.

\ref{t:2vii}:
Take $x\in\GG$ and define $q\colon\omega\mapsto
\prox_{\gamma\mathsf{f}_{\omega}}(x(\omega))$.
Then \ref{t:2iii} asserts that $q\in\GG$ and
$q=\prox_{\gamma f}x$.
Hence, we derive from \ref{t:2i},
\cite[Remark~12.24]{Livre1}, and Definition~\ref{d:5} that
\begin{align}
\moyo{f}{\gamma}(x)
&=f\brk!{\prox_{\gamma f}x}
+(2\gamma)^{-1}\norm{x-\prox_{\gamma f}x}_{\HH}^2
\nonumber\\
&=\int_{\Omega}\brk2{\mathsf{f}_{\omega}\brk!{q(\omega)}+
(2\gamma)^{-1}\norm{x(\omega)-q(\omega)}_{\HW}^2}\mu(d\omega)
\nonumber\\
&=\int_{\Omega}
\moyo{\mathsf{f}_{\omega}}{\gamma}\brk!{x(\omega)}\mu(d\omega),
\end{align}
as claimed.

\ref{t:2viii}:
Let $x^*\in\GG$, let $(\gamma_n)_{n\in\NN}$ be a sequence in
$\zeroun$ such that $\gamma_n\downarrow 0$, and define
\begin{equation}
(\forall n\in\NN)\quad\vartheta_n\colon\Omega\to\RR\colon
\omega\mapsto\moyo{\brk!{\mathsf{f}_{\omega}^*}}{\gamma_n}
\brk!{x^*(\omega)}.
\end{equation}
For every $n\in\NN$, since Moreau's decomposition theorem
\cite[Theorem~14.3(i)]{Livre1} gives
\begin{equation}
\label{e:6ccc}
(\forall\omega\in\Omega)\quad\vartheta_n(\omega)=
\gamma_n^{-1}\norm{x^*(\omega)}_{\HW}^2-
\moyo{\mathsf{f}_{\omega}}{\gamma_n^{-1}}\brk!{
\gamma_n^{-1}x^*(\omega)},
\end{equation}
it follows from \ref{t:2vii} that
$\vartheta_n\in\mathscr{L}^1(\Omega,\FF,\mu;\RR)$.
Further, we deduce from \cite[Proposition~12.33(ii)]{Livre1} that
\begin{equation}
\label{e:idrz}
(\forall\omega\in\Omega)\quad
\brk!{\vartheta_n(\omega)}_{n\in\NN}\,\,
\text{is increasing and}\,\,
\vartheta_n(\omega)\uparrow
\mathsf{f}_{\omega}^*\brk!{x^*(\omega)}
\end{equation}
and, therefore, that the function $\Omega\to\RX\colon
\omega\mapsto\mathsf{f}_{\omega}^*(x^*(\omega))$ is
$\FF$-measurable. On the other hand, invoking \eqref{e:6ccc},
\ref{t:2vii}, Moreau's decomposition theorem, and
\cite[Proposition~12.33(ii)]{Livre1}, we obtain
\begin{align}
\int_{\Omega}\vartheta_n(\omega)\mu(d\omega)
&=\int_{\Omega}\brk2{
\gamma_n^{-1}\norm{x^*(\omega)}_{\HW}^2-
\moyo{\mathsf{f}_{\omega}}{\gamma_n^{-1}}\brk!{
\gamma_n^{-1}x^*(\omega)}}\mu(d\omega)
\nonumber\\
&=\gamma_n^{-1}\norm{x^*}_{\HH}^2-
\moyo{f}{\gamma_n^{-1}}\brk!{\gamma_n^{-1}x^*}
\nonumber\\
&=\moyo{(f^*)}{\gamma_n}(x^*)
\nonumber\\
&\to f^*(x^*)\,\,\text{as}\,\,n\to\pinf.
\end{align}
Thus, in view of \eqref{e:idrz}, we infer from the Beppo Levi
monotone convergence theorem 
\cite[Theorem~2.8.2 and Corollary~2.8.6]{Boga07} that
\begin{equation}
f^*(x^*)
=\lim\int_{\Omega}\vartheta_n(\omega)\mu(d\omega)
=\int_{\Omega}\lim\vartheta_n(\omega)\,\mu(d\omega)
=\int_{\Omega}
\mathsf{f}_{\omega}^*\brk!{x^*(\omega)}\mu(d\omega).
\end{equation}

\ref{t:2ix}:
Assumption~\ref{a:3}\ref{a:3c} ensures that
$(\forall\omega\in\Omega)$ $r(\omega)\in\dom\mathsf{f}_{\omega}$.
Now take $x\in\HH$ and set
\begin{equation}
\brk!{\forall\alpha\in\RPP}\quad
\theta_{\alpha}\colon\Omega\to\RX\colon\omega\mapsto
\frac{\mathsf{f}_{\omega}\brk!{r(\omega)+\alpha
x(\omega)}-\mathsf{f}_{\omega}\brk!{r(\omega)}}{\alpha}.
\end{equation}
Then, for every $\alpha\in\RPP$, since $r+\alpha x\in\GG$ and
$r\in\GG$, it results from \ref{t:2i-} that $\theta_{\alpha}$ is
$\FF$-measurable. On the other hand, by
Assumption~\ref{a:3}\ref{a:3a} and
\cite[Propositions~9.27 and 9.30(ii)]{Livre1}, we obtain
\begin{equation}
(\forall\omega\in\Omega)\quad
\text{the net}\,\,\brk!{\theta_{\alpha}(\omega)}_{\alpha\in\RPP}
\,\,\text{is increasing and}\,\,
(\rec\mathsf{f}_{\omega})\brk!{x(\omega)}
=\lim_{\alpha\uparrow\pinf}\theta_{\alpha}(\omega).
\end{equation}
Altogether, we infer from the Beppo Levi monotone convergence
theorem, Assumption~\ref{a:3}\ref{a:3c}, \ref{t:2i},
and \cite[Proposition~9.30(ii)]{Livre1} that
\begin{align}
\int_{\Omega}(\rec\mathsf{f}_{\omega})\brk!{x(\omega)}
\mu(d\omega)
&=\int_{\Omega}\lim_{\alpha\uparrow\pinf}
\theta_{\alpha}(\omega)\,\mu(d\omega)
\nonumber\\
&=\lim_{\alpha\uparrow\pinf}\int_{\Omega}
\theta_{\alpha}(\omega)\mu(d\omega)
\nonumber\\
&=\lim_{\alpha\uparrow\pinf}\frac{1}{\alpha}\brk3{
\int_{\Omega}\mathsf{f}_{\omega}\brk!{r(\omega)+\alpha
x(\omega)}\mu(d\omega)-
\int_{\Omega}\mathsf{f}_{\omega}\brk!{r(\omega)}\mu(d\omega)}
\nonumber\\
&=\lim_{\alpha\uparrow\pinf}\frac{f(r+\alpha x)-f(r)}{\alpha}
\nonumber\\
&=(\rec f)(x),
\end{align}
which completes the proof.
\end{proof}

\begin{remark}
\label{r:3}
Consider Theorem~\ref{t:2} in the special case of
Example~\ref{ex:1}\ref{ex:1iii}. Then
\ref{t:2i},
\ref{t:2ii},
\ref{t:2iii}, 
\ref{t:2viii}, and
\ref{t:2ix} were obtained, respectively, in
\cite[Corollary, p.~227]{Rock71},
\cite[Equation~(25)]{Rock71},
\cite[Proposition~24.13]{Livre1},
\cite[Theorem~2]{Rock71},
and \cite[Proposition~II.1]{Bism73}.
On the other hand, in the special case of
Example~\ref{ex:1}\ref{ex:1ii},
\ref{t:2iii} was obtained in
\cite[Corollary~2.2]{Ibap21}.
\end{remark}

\begin{example}
\label{ex:8}
Consider the setting of Example~\ref{ex:1}\ref{ex:1ii}
and suppose, in addition, that $(\forall k\in\NN)$
$\alpha_k=1$ and $\mathsf{H}_k=\RR$. Then $\HH=\ell^2(\NN)$.
Now set $(\forall k\in\NN)$ $\mathsf{f}_k=\abs{\Cdot}$. Then
\begin{equation}
\dom\brk3{\leftindex^{\mathfrak{G}}{\int}_{\Omega}^{\oplus}
\mathsf{f}_{\omega}\mu(d\omega)}
=\ell^1(\NN)
\neq
\ell^2(\NN)
=\leftindex^{\mathfrak{G}}{\int}_{\Omega}^{\oplus}
\dom\mathsf{f}_{\omega}\,\mu(d\omega).
\end{equation}
Thus, the closure operation in Theorem~\ref{t:2}\ref{t:2iv}
must not be omitted.
\end{example}

Every maximally monotone operator on $\RR$ is the subdifferential
of a function in $\Gamma_0(\RR)$ \cite[Corollary~22.23]{Livre1}.
The following result is an extension of this fact.

\begin{corollary}
\label{c:5}
Let $(\Omega,\FF,\mu)$ be a complete $\sigma$-finite measure space
and, for every $\omega\in\Omega$, let $\AW\colon\RR\to 2^{\RR}$ be
maximally monotone. Set $\HH=L^2(\Omega,\FF,\mu;\RR)$ and
\begin{equation}
\label{e:3hcz}
A\colon\HH\to 2^{\HH}\colon x\mapsto
\menge{x^*\in\HH}{(\forallmu\omega\in\Omega)\,\,
x^*(\omega)\in\AW\brk!{x(\omega)}}.
\end{equation}
Then the following are equivalent:
\begin{enumerate}
\item
\label{c:5i}
$A$ is maximally monotone.
\item
\label{c:5ii}
There exists $f\in\Gamma_0(\HH)$ such that $A=\partial f$.
\item
\label{c:5iii}
$\dom A\neq\emp$ and, for every $\mathsf{x}\in\RR$, the function
$\Omega\to\RR\colon\omega\mapsto J_{\AW}\mathsf{x}$ is
$\FF$-measurable.
\end{enumerate}
\end{corollary}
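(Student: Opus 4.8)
The plan is to prove the chain of implications \ref{c:5ii}$\Rightarrow$\ref{c:5i}$\Rightarrow$\ref{c:5iii}$\Rightarrow$\ref{c:5ii}. Throughout, I use that, by Example~\ref{ex:1}\ref{ex:1iii} with every fiber equal to $\RR$, the space $\HH=L^2(\Omega,\FF,\mu;\RR)$ is the Hilbert direct integral of the $\FF$-measurable vector field whose space $\mathfrak{G}$ consists of all $(\FF,\BE_{\RR})$-measurable mappings $\Omega\to\RR$, and that \eqref{e:3hcz} reads $A=\leftindex^{\mathfrak{G}}{\int}_{\Omega}^{\oplus}\AW\mu(d\omega)$. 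The implication \ref{c:5ii}$\Rightarrow$\ref{c:5i} is immediate, since the subdifferential of a function in $\Gamma_0(\HH)$ is maximally monotone. For \ref{c:5i}$\Rightarrow$\ref{c:5iii}, maximal monotonicity of $A$ forces $\dom A\neq\emp$ (indeed $\dom A=\ran J_A$ with $\dom J_A=\HH$), so Corollary~\ref{c:6} applies with $\mathsf{H}=\RR$, and its equivalence \ref{c:6i}$\Leftrightarrow$\ref{c:6ii} gives that $\omega\mapsto J_{\AW}\mathsf{x}$ is $\FF$-measurable for every $\mathsf{x}\in\RR$; together with $\dom A\neq\emp$ this is \ref{c:5iii}.

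The substance lies in \ref{c:5iii}$\Rightarrow$\ref{c:5ii}. Since $\dom A\neq\emp$, Proposition~\ref{p:1}\ref{p:1i} supplies $\bar x\in\GG$, $\bar x^*\in\GG$, and a $\mu$-negligible set $\Xi_0\in\FF$ with $\bar x^*(\omega)\in\AW(\bar x(\omega))$ for every $\omega\in\complement\Xi_0$. For such $\omega$, \cite[Corollary~22.23]{Livre1} furnishes $\mathsf{g}_{\omega}\in\Gamma_0(\RR)$ with $\partial\mathsf{g}_{\omega}=\AW$; as $\bar x(\omega)\in\dom\partial\mathsf{g}_{\omega}\subset\dom\mathsf{g}_{\omega}$, the number $\mathsf{g}_{\omega}(\bar x(\omega))$ is finite, so I set $\mathsf{f}_{\omega}=\mathsf{g}_{\omega}-\mathsf{g}_{\omega}(\bar x(\omega))\in\Gamma_0(\RR)$, which satisfies $\partial\mathsf{f}_{\omega}=\AW$ and $\mathsf{f}_{\omega}(\bar x(\omega))=0$; for $\omega\in\Xi_0$, I put $\mathsf{f}_{\omega}=0$.

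Next I would verify that $(\mathsf{f}_{\omega})_{\omega\in\Omega}$ satisfies Assumption~\ref{a:3}. Item~\ref{a:3a} holds by construction. For item~\ref{a:3b}, observe that $\prox_{\mathsf{f}_{\omega}}=J_{\partial\mathsf{f}_{\omega}}$ equals $J_{\AW}$ on $\complement\Xi_0$; since by \ref{c:5iii} the mapping $\Omega\times\RR\to\RR\colon(\omega,\mathsf{x})\mapsto J_{\AW}\mathsf{x}$ is $\FF$-measurable in $\omega$ and, the operators $J_{\AW}$ being nonexpansive, continuous in $\mathsf{x}$, \cite[Lemma~III.14]{Cast77} gives that $\omega\mapsto J_{\AW}(x(\omega))$ lies in $\mathfrak{G}$ for every $x\in\mathfrak{G}$, and Lemma~\ref{l:1}\ref{l:1ii} absorbs the modification on $\Xi_0$. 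For item~\ref{a:3c}, take $r=\bar x$: then $\omega\mapsto\mathsf{f}_{\omega}(r(\omega))$ is the zero function, which lies in $\mathscr{L}^1(\Omega,\FF,\mu;\RR)$. For item~\ref{a:3d}, take $s^*=\bar x^*$ and $\vartheta\colon\omega\mapsto-\bar x(\omega)\bar x^*(\omega)$: applying \eqref{e:subdiff} at $\bar x(\omega)$ with subgradient $\bar x^*(\omega)$ gives $\mathsf{f}_{\omega}\geq\scal{\Cdot}{\bar x^*(\omega)}_{\RR}+\vartheta(\omega)$ for $\omega\in\complement\Xi_0$, and $\vartheta\in\mathscr{L}^1(\Omega,\FF,\mu;\RR)$ by the Cauchy--Schwarz inequality since $\bar x,\bar x^*\in L^2(\Omega,\FF,\mu;\RR)$.

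With Assumption~\ref{a:3} in force, Theorem~\ref{t:2}\ref{t:2i} yields $f=\leftindex^{\mathfrak{G}}{\int}_{\Omega}^{\oplus}\mathsf{f}_{\omega}\mu(d\omega)\in\Gamma_0(\HH)$ and Theorem~\ref{t:2}\ref{t:2ii} yields $\partial f=\leftindex^{\mathfrak{G}}{\int}_{\Omega}^{\oplus}\partial\mathsf{f}_{\omega}\mu(d\omega)$; since $\partial\mathsf{f}_{\omega}=\AW$ for $\mu$-almost every $\omega$, Definition~\ref{d:6} identifies the latter with $\leftindex^{\mathfrak{G}}{\int}_{\Omega}^{\oplus}\AW\mu(d\omega)=A$, so $A=\partial f$ with $f\in\Gamma_0(\HH)$, which is \ref{c:5ii}. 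I expect the only genuinely delicate point to be the verification of Assumption~\ref{a:3}\ref{a:3b}, namely upgrading the pointwise measurability of the resolvents in hypothesis~\ref{c:5iii} to measurability of $\omega\mapsto J_{\AW}(x(\omega))$ along arbitrary measurable vector fields $x$; this is exactly the Carath\'eodory/Castaing argument already used in the proof of Corollary~\ref{c:6}. Everything else reduces to the one-dimensional fact \cite[Corollary~22.23]{Livre1} and the normalization of the primitives $\mathsf{f}_{\omega}$ at the base point $\bar x(\omega)$, which is what makes items~\ref{a:3c} and \ref{a:3d} hold.
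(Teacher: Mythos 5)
Your proof is correct and follows essentially the same route as the paper: the cycle (ii)$\Rightarrow$(i)$\Rightarrow$(iii)$\Rightarrow$(ii), with the hard step (iii)$\Rightarrow$(ii) handled by taking one-dimensional antiderivatives $\mathsf{g}_{\omega}$ of the $\AW$ via \cite[Corollary~22.23]{Livre1}, normalizing them at a point of $\dom A$ so that Assumption~\ref{a:3}\ref{a:3c}--\ref{a:3d} hold, invoking the Carath\'eodory measurability lemma \cite[Lemma~III.14]{Cast77} for \ref{a:3b}, and concluding with Theorem~\ref{t:2}\ref{t:2i}--\ref{t:2ii}. The only cosmetic difference is that you set $\mathsf{f}_{\omega}=0$ on the exceptional null set while the paper instead redefines the base point $r$ there using completeness; both are valid.
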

\begin{proof}
\ref{c:5ii}$\Rightarrow$\ref{c:5i}:
Use Moreau's theorem \cite[Proposition~12.b]{More65}.

\ref{c:5i}$\Rightarrow$\ref{c:5iii}:
This is a special case of Corollary~\ref{c:6}.

\ref{c:5iii}$\Rightarrow$\ref{c:5ii}:
Set $\mathfrak{G}=\menge{x\colon\Omega\to\RR}{\text{$x$ is
$\FF$-measurable}}$. Then, as seen in
Example~\ref{ex:1}\ref{ex:1iii},
$\HH=\leftindex^{\mathfrak{G}}{\int}_{\Omega}^{\oplus}
\RR\,\mu(d\omega)$. For every $\omega\in\Omega$,
\cite[Corollary~22.23]{Livre1} asserts that there exists
$\mathsf{g}_{\omega}\in\Gamma_0(\RR)$ such that
$\AW=\partial\mathsf{g}_{\omega}$.
Next, since $\dom A\neq\emp$ and $(\Omega,\FF,\mu)$ is complete,
there exist $r$ and $s^*$ in $\mathscr{L}^2(\Omega,\FF,\mu;\RR)$
such that
\begin{equation}
\label{e:keji}
(\forallmu\omega\in\Omega)\quad
s^*(\omega)
\in\AW\brk!{r(\omega)}
=\partial\mathsf{g}_{\omega}\brk!{r(\omega)}
\end{equation}
and
\begin{equation}
\label{e:kejj}
(\forall\omega\in\Omega)\quad
r(\omega)\in\dom\AW\subset\dom\mathsf{g}_{\omega}.
\end{equation}
Now set
\begin{equation}
\label{e:v0r7}
(\forall\omega\in\Omega)\quad
\mathsf{f}_{\omega}
=\mathsf{g}_{\omega}
-\mathsf{g}_{\omega}\brk!{r(\omega)}.
\end{equation}
Then the functions $(\mathsf{f}_{\omega})_{\omega\in\Omega}$ lie
in $\Gamma_0(\RR)$ and, by
\cite[Proposition~24.8(i) and Example~23.3]{Livre1},
$(\forall\omega\in\Omega)$
$\prox_{\mathsf{f}_{\omega}}
=\prox_{\mathsf{g}_{\omega}}
=J_{\AW}$.
In turn, appealing to the continuity of the operators
$(J_{\AW})_{\omega\in\Omega}$, we deduce from
\cite[Lemma~III.14]{Cast77}
that the mapping $\Omega\times\RR\to\RR\colon(\omega,\mathsf{x})
\mapsto\prox_{\mathsf{f}_{\omega}}\mathsf{x}$
is $\FF\otimes\BE_{\RR}$-measurable.
Therefore, for every $x\in\mathscr{L}^2(\Omega,\FF,\mu;\RR)$,
the mapping $\Omega\to\RR\colon
\omega\mapsto\prox_{\mathsf{f}_{\omega}}(x(\omega))$ lies
in $\mathfrak{G}$. Next, we get from \eqref{e:v0r7}
and \eqref{e:kejj} that $(\forall\omega\in\Omega)$
$\mathsf{f}_{\omega}(r(\omega))=0$. Moreover, by \eqref{e:keji},
\begin{equation}
(\forallmu\omega\in\Omega)(\forall\mathsf{x}\in\RR)\quad
\mathsf{f}_{\omega}(\mathsf{x})
=\mathsf{g}_{\omega}(\mathsf{x})-
\mathsf{g}_{\omega}\brk!{r(\omega)}
\geq \brk!{\mathsf{x}-r(\omega)}s^*(\omega)
=\mathsf{x}s^*(\omega)-r(\omega)s^*(\omega).
\end{equation}
Hence, since $\omega\mapsto r(\omega)s^*(\omega)$ lies in
$\mathscr{L}^1(\Omega,\FF,\mu;\RR)$, the family
$(\mathsf{f}_{\omega})_{\omega\in\Omega}$ satisfies the assumption
of Theorem~\ref{t:2}. Altogether, we conclude via
Theorem~\ref{t:2}\ref{t:2i} that
\begin{equation}
\leftindex^{\mathfrak{G}}{\int}_{\Omega}^{\oplus}
\mathsf{f}_{\omega}\mu(d\omega)\in\Gamma_0(\HH)
\end{equation}
and via Theorem~\ref{t:2}\ref{t:2ii} and \eqref{e:3hcz} that
\begin{equation}
\partial\brk3{\leftindex^{\mathfrak{G}}{\int}_{\Omega}^{\oplus}
\mathsf{f}_{\omega}\mu(d\omega)}
=\leftindex^{\mathfrak{G}}{\Int}_{\Omega}^{\oplus}
\partial\mathsf{f}_{\omega}\mu(d\omega)
=\leftindex^{\mathfrak{G}}{\Int}_{\Omega}^{\oplus}
\AW\mu(d\omega)
=A,
\end{equation}
as desired.
\end{proof}

\begin{corollary}
Let $(\mathsf{A}_k)_{k\in\NN}$ be a family of maximally monotone
operators from $\RR$ to $2^{\RR}$, and define
\begin{equation}
A\colon\ell^2(\NN)\to 2^{\ell^2(\NN)}\colon
(\mathsf{x}_k)_{k\in\NN}\mapsto
\menge{(\mathsf{x}_k^*)_{k\in\NN}\in\ell^2(\NN)}{
(\forall k\in\NN)\,\,\mathsf{x}_k^*\in
\mathsf{A}_k\mathsf{x}_k}.
\end{equation}
Suppose that $\dom A\neq\emp$. Then $A$ is maximally monotone
and there exists $f\in\Gamma_0(\ell^2(\NN))$ such that
$A=\partial f$.
\end{corollary}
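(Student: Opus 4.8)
The plan is to obtain this as a direct specialization of Corollary~\ref{c:5}. First I would choose the measure space $(\Omega,\FF,\mu)$ with $\Omega=\NN$, $\FF=2^{\NN}$, and $\mu$ the counting measure, so that $\mu(\{k\})=1$ for every $k\in\NN$. Since $\FF$ contains every subset of $\NN$, this space is complete, and it is $\sigma$-finite because $\NN=\bigcup_{n\in\NN}\{0,\ldots,n\}$ with each $\{0,\ldots,n\}$ of finite measure. As recorded in Example~\ref{ex:1}\ref{ex:1ii} (with all weights equal to $1$ and all factor spaces equal to $\RR$), we have $L^2(\Omega,\FF,\mu;\RR)=\ell^2(\NN)$, and a mapping $x\colon\NN\to\RR$ belongs to $\mathscr{L}^2(\Omega,\FF,\mu;\RR)$ if and only if $(x(k))_{k\in\NN}\in\ell^2(\NN)$. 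Furthermore, because every singleton has positive $\mu$-measure, the almost-everywhere constraint $(\forallmu k\in\NN)$ $x^*(k)\in\mathsf{A}_k(x(k))$ is equivalent to $(\forall k\in\NN)$ $\mathsf{x}_k^*\in\mathsf{A}_k\mathsf{x}_k$; hence the operator $A$ in the present statement coincides with the operator defined in \eqref{e:3hcz} for the family $(\mathsf{A}_k)_{k\in\NN}$.

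Next I would check condition \ref{c:5iii} of Corollary~\ref{c:5}. Its first half, $\dom A\neq\emp$, is part of the hypotheses. Its second half requires that, for every $\mathsf{x}\in\RR$, the function $\NN\to\RR\colon k\mapsto J_{\mathsf{A}_k}\mathsf{x}$ be $\FF$-measurable; this is automatic here since $\FF=2^{\NN}$, so every real-valued function on $\NN$ is $\FF$-measurable. Therefore \ref{c:5iii} holds.

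Finally, invoking the chain of equivalences \ref{c:5i}$\Leftrightarrow$\ref{c:5ii}$\Leftrightarrow$\ref{c:5iii} in Corollary~\ref{c:5} immediately gives that $A$ is maximally monotone and that $A=\partial f$ for some $f\in\Gamma_0(\ell^2(\NN))$, which is the assertion.

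There is no genuine obstacle in this proof: it is purely a matter of matching the discrete setting to the framework of Corollary~\ref{c:5}. The only points deserving a line of justification are that the counting measure on $\NN$ is complete and $\sigma$-finite, and that the required measurability in \ref{c:5iii} costs nothing because $\FF=2^{\NN}$.
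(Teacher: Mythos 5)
Your proposal is correct and takes essentially the same route as the paper, which simply applies Corollary~\ref{c:5} with $\Omega=\NN$, $\FF=2^{\NN}$, and $\mu$ the counting measure. The extra details you supply (completeness, $\sigma$-finiteness, the identification $L^2(\NN,2^{\NN},\mu;\RR)=\ell^2(\NN)$, and the automatic validity of condition \ref{c:5iii}) are exactly the checks the paper leaves implicit.
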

\begin{proof}
Apply Corollary~\ref{c:5} to the case where $\Omega=\NN$,
$\FF=2^{\NN}$, and $\mu$ is the counting measure.
\end{proof}

\begin{corollary}
\label{c:1}
Suppose that Assumption~\ref{a:1} is in force and, for every
$\omega\in\Omega$, let $\mathsf{C}_{\omega}$ be a nonempty closed
convex subset of $\HW$. Set
\begin{equation}
\label{e:dbih}
C=\leftindex^{\mathfrak{G}}{\int}_{\Omega}^{\oplus}
\mathsf{C}_{\omega}\mu(d\omega).
\end{equation}
Suppose that $C\neq\emp$ and that, for every $x\in\GG$,
the mapping $\omega\mapsto\proj_{\mathsf{C}_{\omega}}(x(\omega))$
lies in $\mathfrak{G}$. Then the following hold:
\begin{enumerate}
\item
\label{c:1i}
$C$ is a closed convex subset of $\HH$.
\item
\label{c:1ii}
$N_C=\leftindex^{\mathfrak{G}}{\Int}_{\Omega}^{\oplus}
N_{\mathsf{C}_{\omega}}\mu(d\omega)$.
\item
\label{c:1iii}
$\proj_C=\leftindex^{\mathfrak{G}}{\Int}_{\Omega}^{\oplus}
\proj_{\mathsf{C}_{\omega}}\mu(d\omega)$.
\item
\label{c:1iv}
$d_C^2=\leftindex^{\mathfrak{G}}{\Int}_{\Omega}^{\oplus}
d_{\mathsf{C}_{\omega}}^2\mu(d\omega)$.
\item
\label{c:1v}
$\sigma_C=\leftindex^{\mathfrak{G}}{\Int}_{\Omega}^{\oplus}
\sigma_{\mathsf{C}_{\omega}}\mu(d\omega)$.
\item
\label{c:1vi}
Suppose that, for every $\omega\in\Omega$,
$\mathsf{C}_{\omega}$ is a cone in $\HW$. Then
$C^{\ominus}=\leftindex^{\mathfrak{G}}{\Int}_{\Omega}^{\oplus}
\mathsf{C}_{\omega}^{\ominus}\,\mu(d\omega)$.
\item
\label{c:1vii}
Suppose that, for every $\omega\in\Omega$,
$\mathsf{C}_{\omega}$ is a vector subspace of $\HW$. Then
$C^{\perp}=\leftindex^{\mathfrak{G}}{\Int}_{\Omega}^{\oplus}
\mathsf{C}_{\omega}^{\perp}\,\mu(d\omega)$.
\end{enumerate}
\end{corollary}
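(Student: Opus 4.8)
The plan is to reduce the statement to Theorem~\ref{t:2} by specializing it to the indicator functions $\mathsf{f}_{\omega}=\iota_{\mathsf{C}_{\omega}}$, $\omega\in\Omega$. First I would check that Assumption~\ref{a:3} is satisfied by this family. Item \ref{a:3a} holds because each $\mathsf{C}_{\omega}$ is nonempty, closed, and convex, so that $\iota_{\mathsf{C}_{\omega}}\in\Gamma_0(\HW)$; item \ref{a:3b} is exactly the standing hypothesis of the corollary, since $\prox_{\iota_{\mathsf{C}_{\omega}}}=\proj_{\mathsf{C}_{\omega}}$; and item \ref{a:3d} holds trivially with $s^{*}=0$ and $\vartheta=0$, because $\iota_{\mathsf{C}_{\omega}}\geq 0$. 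For item \ref{a:3c}, I would start from any representative $r_{0}\in\GG$ of a point of the nonempty set $C$, so that $r_{0}(\omega)\in\mathsf{C}_{\omega}$ for $\mu$-almost every $\omega$, and then set $r\colon\omega\mapsto\proj_{\mathsf{C}_{\omega}}(r_{0}(\omega))$: by the projection hypothesis $r\in\mathfrak{G}$, while $r=r_{0}$ $\mu$-a.e.\ yields $r\in\GG$ via Lemma~\ref{l:1}\ref{l:1ii} and \eqref{e:4a5t}, and $r(\omega)\in\mathsf{C}_{\omega}$ for \emph{every} $\omega$, so that $\omega\mapsto\iota_{\mathsf{C}_{\omega}}(r(\omega))$ is the zero function, which lies in $\mathscr{L}^{1}(\Omega,\FF,\mu;\RR)$.

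Thus Assumption~\ref{a:3} holds for $(\iota_{\mathsf{C}_{\omega}})_{\omega\in\Omega}$, and I may set $f=\leftindex^{\mathfrak{G}}{\int}_{\Omega}^{\oplus}\iota_{\mathsf{C}_{\omega}}\mu(d\omega)$. Reading off Definition~\ref{d:5} (with the integration convention therein) and Definition~\ref{d:3}, one gets $f(x)=0$ when $x(\omega)\in\mathsf{C}_{\omega}$ for $\mu$-almost every $\omega$ and $f(x)=\pinf$ otherwise, i.e.\ $f=\iota_{C}$. Since Theorem~\ref{t:2}\ref{t:2i} gives $f\in\Gamma_0(\HH)$, the indicator $\iota_{C}$ is lower semicontinuous and convex, hence $C$ is closed and convex, and it is nonempty by assumption; this is \ref{c:1i}. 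The remaining items follow by translating the corresponding parts of Theorem~\ref{t:2} back through $f=\iota_{C}$ and $\mathsf{f}_{\omega}=\iota_{\mathsf{C}_{\omega}}$: \ref{c:1ii} is Theorem~\ref{t:2}\ref{t:2ii} together with $\partial\iota_{C}=N_{C}$ and $\partial\iota_{\mathsf{C}_{\omega}}=N_{\mathsf{C}_{\omega}}$; \ref{c:1iii} is Theorem~\ref{t:2}\ref{t:2iii} with $\gamma=1$ together with $\prox_{\iota_{C}}=\proj_{C}$ and $\prox_{\iota_{\mathsf{C}_{\omega}}}=\proj_{\mathsf{C}_{\omega}}$; \ref{c:1iv} is Theorem~\ref{t:2}\ref{t:2vii} with $\gamma=1$, using $\moyo{\iota_{C}}{1}=d_{C}^{2}/2$ and $\moyo{\iota_{\mathsf{C}_{\omega}}}{1}=d_{\mathsf{C}_{\omega}}^{2}/2$ and the evident fact that the Hilbert direct integral of functions commutes with multiplication of the integrand by the constant~$2$; and \ref{c:1v} is Theorem~\ref{t:2}\ref{t:2viii} together with $\iota_{C}^{*}=\sigma_{C}$ and $\iota_{\mathsf{C}_{\omega}}^{*}=\sigma_{\mathsf{C}_{\omega}}$.

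It remains to handle \ref{c:1vi} and \ref{c:1vii}. Assume each $\mathsf{C}_{\omega}$ is a cone. Then $C$ is a nonempty closed convex cone: closedness and convexity come from \ref{c:1i}, $\lambda C\subset C$ for every $\lambda\in\RPP$ because $\lambda x(\omega)\in\mathsf{C}_{\omega}$ $\mu$-a.e.\ whenever $x\in C$, and $0\in C$ since $0\in\mathsf{C}_{\omega}$ for every $\omega$ (a nonempty closed cone contains the origin). In particular $\sigma_{C}\geq 0$ and $\sigma_{\mathsf{C}_{\omega}}\geq 0$, whence $C^{\ominus}=\menge{x^{*}\in\HH}{\sigma_{C}(x^{*})\leq 0}=\menge{x^{*}\in\HH}{\sigma_{C}(x^{*})=0}$ and, likewise, $\mathsf{C}_{\omega}^{\ominus}=\menge{y^{*}\in\HW}{\sigma_{\mathsf{C}_{\omega}}(y^{*})=0}$. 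Now fix $x^{*}\in\HH$. By \ref{c:1v}, $\sigma_{C}(x^{*})=\int_{\Omega}\sigma_{\mathsf{C}_{\omega}}(x^{*}(\omega))\mu(d\omega)$, the integral of a nonnegative $\FF$-measurable function, so $\sigma_{C}(x^{*})=0$ if and only if $\sigma_{\mathsf{C}_{\omega}}(x^{*}(\omega))=0$ for $\mu$-almost every $\omega$, i.e.\ if and only if $x^{*}(\omega)\in\mathsf{C}_{\omega}^{\ominus}$ for $\mu$-almost every $\omega$, i.e.\ (Definition~\ref{d:3}) if and only if $x^{*}\in\leftindex^{\mathfrak{G}}{\int}_{\Omega}^{\oplus}\mathsf{C}_{\omega}^{\ominus}\mu(d\omega)$. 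This proves \ref{c:1vi}. Finally, \ref{c:1vii} is the special case of \ref{c:1vi} in which each $\mathsf{C}_{\omega}$ is a closed vector subspace: then $\mathsf{C}_{\omega}$ is a nonempty closed convex cone with $\mathsf{C}_{\omega}^{\ominus}=\mathsf{C}_{\omega}^{\perp}$, and $C$ is a closed vector subspace of $\HH$ (closed by \ref{c:1i}, and stable under linear combinations because the $\mathsf{C}_{\omega}$'s are) with $C^{\ominus}=C^{\perp}$, so that \ref{c:1vi} gives $C^{\perp}=\leftindex^{\mathfrak{G}}{\int}_{\Omega}^{\oplus}\mathsf{C}_{\omega}^{\perp}\mu(d\omega)$.

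The only genuinely delicate step is the verification of Assumption~\ref{a:3}\ref{a:3c}: one must exhibit a representative $r\in\GG$ with $r(\omega)\in\mathsf{C}_{\omega}$ for \emph{every} $\omega$, and not merely $\mu$-almost everywhere, so that the composite function is honestly real-valued and hence a legitimate element of $\mathscr{L}^{1}(\Omega,\FF,\mu;\RR)$; this is precisely what applying the projection hypothesis to an arbitrary representative of a point of $C$ accomplishes. Once $f=\iota_{C}$ and Assumption~\ref{a:3} are in place, everything else is routine bookkeeping on top of Theorem~\ref{t:2}.
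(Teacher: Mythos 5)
Your proof is correct and takes essentially the same route as the paper: specialize Theorem~\ref{t:2} to $\mathsf{f}_{\omega}=\iota_{\mathsf{C}_{\omega}}$ after verifying Assumption~\ref{a:3} (your explicit construction of $r$ via the projection hypothesis is a clean realization of the representative the paper obtains from completeness of the measure space), and read items \ref{c:1i}--\ref{c:1v} off the corresponding parts of that theorem. The only divergence is item \ref{c:1vi}, where you characterize $C^{\ominus}$ through the vanishing of the nonnegative support function combined with \ref{c:1v}, whereas the paper writes $C^{\ominus}=N_{C}0$ and applies \ref{c:1ii}; both reductions are equally valid.
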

\begin{proof}
Set $(\forall\omega\in\Omega)$
$\mathsf{f}_{\omega}=\iota_{\mathsf{C}_{\omega}}$.
Then, for every $\omega\in\Omega$,
$\mathsf{f}_{\omega}\in\Gamma_0(\HW)$,
$\mathsf{f}_{\omega}\geq 0$, and
$\prox_{\mathsf{f}_{\omega}}=\proj_{\mathsf{C}_{\omega}}$.
Moreover, since $C\neq\emp$ and $(\Omega,\FF,\mu)$ is complete,
there exists $r\in\GG$ such that, for every $\omega\in\Omega$,
$r(\omega)\in\mathsf{C}_{\omega}$ or, equivalently,
$\mathsf{f}_{\omega}(r(\omega))=0$. Altogether, the
family $(\mathsf{f}_{\omega})_{\omega\in\Omega}$ satisfies the
assumption of Theorem~\ref{t:2}. Therefore, in view of items
\ref{t:2i-} and \ref{t:2i} in Theorem~\ref{t:2},
\begin{equation}
\label{e:8mnt}
f=\leftindex^{\mathfrak{G}}{\int}_{\Omega}^{\oplus}
\mathsf{f}_{\omega}\mu(d\omega)\,\,
\text{is well defined and lies in}\,\,\Gamma_0\brk!{\HH}.
\end{equation}

\ref{c:1i}:
Using Definitions~\ref{d:5} and \ref{d:3}, together with
\eqref{e:dbih}, we obtain
\begin{align}
\label{e:jwk7}
(\forall x\in\HH)\quad
f(x)
&=\int_{\Omega}\iota_{\mathsf{C}_{\omega}}\brk!{x(\omega)}
\mu(d\omega)
\nonumber\\
&=
\begin{cases}
0,&\text{if}\,\,(\forallmu\omega\in\Omega)\,\,x(\omega)\in
\mathsf{C}_{\omega};\\
\pinf,&\text{otherwise}
\end{cases}
\nonumber\\
&=
\begin{cases}
0,&\text{if}\,\,x\in C;\\
\pinf,&\text{otherwise}
\end{cases}
\nonumber\\
&=\iota_C(x),
\end{align}
and the claim thus follows from \eqref{e:8mnt}.

\ref{c:1ii}--\ref{c:1v}:
In the light of \eqref{e:8mnt} and \eqref{e:jwk7},
these follow from items \ref{t:2ii}, \ref{t:2iii},
\ref{t:2vii}, and \ref{t:2viii} in Theorem~\ref{t:2}, respectively.

\ref{c:1vi}:
We deduce from \cite[Example~6.40]{Livre1} and \ref{c:1ii} that
\begin{equation}
C^{\ominus}
=N_C0
=\leftindex^{\mathfrak{G}}{\Int}_{\Omega}^{\oplus}
\brk!{N_{\mathsf{C}_{\omega}}\mathsf{0}}\mu(d\omega)
=\leftindex^{\mathfrak{G}}{\Int}_{\Omega}^{\oplus}
\mathsf{C}_{\omega}^{\ominus}\,\mu(d\omega).
\end{equation}

\ref{c:1vii}:
Use \ref{c:1vi} and \cite[Proposition~6.23]{Livre1}.
\end{proof}

\begin{proposition}
\label{p:90}
Suppose that Assumption~\ref{a:3} is in force.
Let $\mathsf{G}$ be a separable real Hilbert space and,
for every $\omega\in\Omega$, let
$\LW\colon\mathsf{G}\to\HW$ be linear and bounded.
Suppose that, for every $\mathsf{z}\in\mathsf{G}$, the mapping
$\mathfrak{e}_{\mathsf{L}}\mathsf{z}\colon
\omega\mapsto\LW\mathsf{z}$
lies in $\mathfrak{G}$. Additionally, suppose that
$\int_{\Omega}\norm{\LW}^2\mu(d\omega)<\pinf$
and that there exists $\mathsf{w}\in\mathsf{G}$ such that
$\int_{\Omega}\mathsf{f}_{\omega}
\brk{\LW\mathsf{w}}\mu(d\omega)<\pinf$.
Define
\begin{equation}
\label{e:chrp}
\mathsf{g}\colon\mathsf{G}\to\RX\colon\mathsf{z}\mapsto
\int_{\Omega}\mathsf{f}_{\omega}
\brk!{\LW\mathsf{z}}\mu(d\omega).
\end{equation}
Then the following hold:
\begin{enumerate}
\item
\label{p:90i}
$\mathsf{g}$ is well defined and lies in $\Gamma_0(\mathsf{G})$.
\item
\label{p:90ii}
Let $(\mathsf{z},\mathsf{z}^*)\in\mathsf{G}\times\mathsf{G}$.
Then $\mathsf{z}^*\in\partial\mathsf{g}(\mathsf{z})$
if and only if there exist sequences
$(\gamma_n)_{n\in\NN}$ in $\RPP$ and
$(\mathsf{z}_n)_{n\in\NN}$ in
$\mathsf{G}$ such that
\begin{equation}
\gamma_n\downarrow 0,\quad
\mathsf{z}_n\to\mathsf{z},\quad\text{and}\quad
\int_{\Omega}\LW^*\brk2{
\prox_{\gamma_n^{-1}\mathsf{f}_{\omega}^*}\brk!{\gamma_n^{-1}
\LW\mathsf{z}_n}}\mu(d\omega)
\to\mathsf{z}^*.
\end{equation}
\end{enumerate}
\end{proposition}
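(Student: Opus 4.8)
The plan is to recognize \eqref{e:chrp} as the composition $\mathsf{g}=f\circ L$, where $f=\leftindex^{\mathfrak{G}}{\int}_{\Omega}^{\oplus}\mathsf{f}_{\omega}\mu(d\omega)$ is the Hilbert direct integral function studied in Theorem~\ref{t:2} and $L\colon\mathsf{G}\to\HH\colon\mathsf{z}\mapsto\mathfrak{e}_{\mathsf{L}}\mathsf{z}$ is the bounded linear operator of Proposition~\ref{p:9}. Since $\int_{\Omega}\norm{\LW}^2\mu(d\omega)<\pinf$ and $\mathfrak{e}_{\mathsf{L}}\mathsf{z}\in\mathfrak{G}$ for every $\mathsf{z}\in\mathsf{G}$, the estimate \eqref{e:692z} gives $\mathfrak{e}_{\mathsf{L}}\mathsf{z}\in\GG$; hence, by Theorem~\ref{t:2}\ref{t:2i-}, the function $\omega\mapsto\mathsf{f}_{\omega}(\LW\mathsf{z})$ is $\FF$-measurable, so that $\mathsf{g}$ is well defined and $\mathsf{g}=f\circ L$. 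Combining Theorem~\ref{t:2}\ref{t:2i} (namely $f\in\Gamma_0(\HH)$) with Proposition~\ref{p:9}\ref{p:9i} ($L$ linear and bounded), $\mathsf{g}$ is convex, lower semicontinuous, and never equal to $\minf$, and $\dom\mathsf{g}\neq\emp$ because $\mathsf{g}(\mathsf{w})<\pinf$ by hypothesis; therefore $\mathsf{g}\in\Gamma_0(\mathsf{G})$, which proves~\ref{p:90i}.

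The cornerstone of~\ref{p:90ii} is the identity
\[
(\forall\gamma\in\RPP)(\forall\mathsf{z}\in\mathsf{G})\quad
\int_{\Omega}\LW^*\brk2{\prox_{\gamma^{-1}\mathsf{f}_{\omega}^*}\brk!{\gamma^{-1}\LW\mathsf{z}}}\mu(d\omega)
=L^*\brk!{\moyo{(\partial f)}{\gamma}(L\mathsf{z})}
=\nabla\brk!{(\moyo{f}{\gamma})\circ L}(\mathsf{z}).
\]
To obtain it: Moreau's decomposition \cite[Theorem~14.3(i)]{Livre1} yields $\prox_{\gamma^{-1}\mathsf{f}_{\omega}^*}(\gamma^{-1}\Cdot)=\moyo{(\partial\mathsf{f}_{\omega})}{\gamma}$; the family $(\partial\mathsf{f}_{\omega})_{\omega\in\Omega}$ satisfies Assumption~\ref{a:2} with $\partial f=\leftindex^{\mathfrak{G}}{\int}_{\Omega}^{\oplus}\partial\mathsf{f}_{\omega}\mu(d\omega)$ (see \eqref{e:k6sn}, \eqref{e:x7v9}, and Theorem~\ref{t:2}\ref{t:2ii}), so Theorem~\ref{t:1}\ref{t:1iib} exhibits $\omega\mapsto\moyo{(\partial\mathsf{f}_{\omega})}{\gamma}(\LW\mathsf{z})$ as a $\GG$-representative of $\moyo{(\partial f)}{\gamma}(L\mathsf{z})$; Proposition~\ref{p:9}\ref{p:9iv} then gives the first equality, and $\nabla(\moyo{f}{\gamma})=\moyo{(\partial f)}{\gamma}$ \cite[Proposition~12.30]{Livre1} together with the chain rule for the composition of the Fr\'echet differentiable function $\moyo{f}{\gamma}$ with $L$ gives the second.

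Now fix a sequence $\gamma_n\downarrow 0$ and set $\mathsf{h}_n=(\moyo{f}{\gamma_n})\circ L$: each $\mathsf{h}_n$ is finite, convex, continuous, and Fr\'echet differentiable with $\nabla\mathsf{h}_n(\mathsf{z})$ equal to the integral in the displayed identity, and $\mathsf{h}_n\uparrow\mathsf{g}$ pointwise as $\gamma_n\downarrow 0$ \cite[Proposition~12.33(ii)]{Livre1}. For ``$\Leftarrow$'', suppose $\mathsf{z}_n\to\mathsf{z}$ and $\nabla\mathsf{h}_n(\mathsf{z}_n)\to\mathsf{z}^*$; from $\mathsf{h}_n(\mathsf{y})\geq\mathsf{h}_n(\mathsf{z}_n)+\scal{\mathsf{y}-\mathsf{z}_n}{\nabla\mathsf{h}_n(\mathsf{z}_n)}_{\mathsf{G}}$, $\mathsf{h}_n(\mathsf{y})\leq\mathsf{g}(\mathsf{y})$, and $\liminf_n\mathsf{h}_n(\mathsf{z}_n)\geq\mathsf{g}(\mathsf{z})$ (valid since $\mathsf{h}_n\geq\mathsf{h}_m$ for $n\geq m$, each $\mathsf{h}_m$ is continuous, and $\sup_m\mathsf{h}_m(\mathsf{z})=\mathsf{g}(\mathsf{z})$), we get $\scal{\mathsf{y}-\mathsf{z}}{\mathsf{z}^*}_{\mathsf{G}}+\mathsf{g}(\mathsf{z})\leq\mathsf{g}(\mathsf{y})$ for every $\mathsf{y}\in\mathsf{G}$, i.e.\ $\mathsf{z}^*\in\partial\mathsf{g}(\mathsf{z})$. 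For ``$\Rightarrow$'', assume $\mathsf{z}^*\in\partial\mathsf{g}(\mathsf{z})$, equivalently $\mathsf{z}=\prox_{\mathsf{g}}(\mathsf{z}+\mathsf{z}^*)$ \cite[Proposition~16.44]{Livre1}, and put $\mathsf{z}_n=\prox_{\mathsf{h}_n}(\mathsf{z}+\mathsf{z}^*)$, so that $\nabla\mathsf{h}_n(\mathsf{z}_n)=(\mathsf{z}+\mathsf{z}^*)-\mathsf{z}_n$; using the $1$-strong convexity of $\mathsf{h}_n+\norm{\Cdot-(\mathsf{z}+\mathsf{z}^*)}_{\mathsf{G}}^2/2$ with minimizer $\mathsf{z}_n$, one gets $\norm{\mathsf{z}-\mathsf{z}_n}_{\mathsf{G}}^2/2\leq\mathsf{h}_n(\mathsf{z})+\norm{\mathsf{z}^*}_{\mathsf{G}}^2/2-\moyo{\mathsf{h}_n}{1}(\mathsf{z}+\mathsf{z}^*)$, and since $\mathsf{h}_n(\mathsf{z})\uparrow\mathsf{g}(\mathsf{z})<\pinf$ and $\moyo{\mathsf{h}_n}{1}\uparrow\moyo{\mathsf{g}}{1}$ pointwise, the right-hand side tends to $\mathsf{g}(\mathsf{z})+\norm{\mathsf{z}^*}_{\mathsf{G}}^2/2-\moyo{\mathsf{g}}{1}(\mathsf{z}+\mathsf{z}^*)=0$; hence $\mathsf{z}_n\to\mathsf{z}$ and $\nabla\mathsf{h}_n(\mathsf{z}_n)\to\mathsf{z}^*$, which is the required conclusion once $\nabla\mathsf{h}_n(\mathsf{z}_n)$ is rewritten via the displayed identity.

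The only nonroutine point is the ``$\Rightarrow$'' direction, and within it the pointwise monotone convergence $\moyo{\mathsf{h}_n}{1}\uparrow\moyo{\mathsf{g}}{1}$ (equivalently, Mosco convergence of $\mathsf{h}_n$ to $\mathsf{g}$), which I would establish by a short argument based on the coercivity of $\mathsf{h}_1+\norm{\Cdot-(\mathsf{z}+\mathsf{z}^*)}_{\mathsf{G}}^2/2$ and the weak lower semicontinuity of the $\mathsf{h}_m$; everything else is bookkeeping with the results of Sections~\ref{sec:3} and~\ref{sec:4}.
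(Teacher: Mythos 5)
Your argument for \ref{p:90i} coincides with the paper's: both write $\mathsf{g}=f\circ L$ with $f$ as in Theorem~\ref{t:2}\ref{t:2i-}--\ref{t:2i} and $L$ as in Proposition~\ref{p:9}\ref{p:9i}, and conclude from $L\mathsf{w}\in\dom f$. For \ref{p:90ii} you also derive the same pivotal identity
$\int_{\Omega}\LW^*\brk2{\prox_{\gamma^{-1}\mathsf{f}_{\omega}^*}\brk!{\gamma^{-1}\LW\mathsf{z}}}\mu(d\omega)
=L^*\brk!{\moyo{(\partial f)}{\gamma}(L\mathsf{z})}$
by the same chain (Theorem~\ref{t:2}\ref{t:2ii}, Proposition~\ref{p:16}, Moreau's decomposition, Theorem~\ref{t:1}\ref{t:1iib} to get a $\GG$-representative, Proposition~\ref{p:9}\ref{p:9iv}). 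The genuine divergence is in the final step: the paper closes the argument by citing the variational-composition characterization of $\gra\partial(f\circ L)$ from \cite[Theorem~4.1 and p.~88]{Penn03}, whereas you reprove that characterization from scratch by exploiting the monotone increase $\mathsf{h}_n=(\moyo{f}{\gamma_n})\circ L\uparrow\mathsf{g}$. Your two directions are sound: the ``$\Leftarrow$'' step is a clean $\Gamma$-liminf computation (the inequality $\varliminf\mathsf{h}_n(\mathsf{z}_n)\geq\sup_m\mathsf{h}_m(\mathsf{z})=\mathsf{g}(\mathsf{z})$ is exactly right), and the ``$\Rightarrow$'' step correctly reduces everything to $\moyo{\mathsf{h}_n}{1}(\mathsf{z}+\mathsf{z}^*)\uparrow\moyo{\mathsf{g}}{1}(\mathsf{z}+\mathsf{z}^*)$, the one point you leave as a sketch; it does hold by the argument you indicate (the proximal points $\prox_{\mathsf{h}_n}(\mathsf{z}+\mathsf{z}^*)$ are bounded by coercivity of $\mathsf{h}_1+\norm{\Cdot-(\mathsf{z}+\mathsf{z}^*)}_{\mathsf{G}}^2/2$, any weak cluster point $\mathsf{y}$ satisfies $\mathsf{h}_m(\mathsf{y})+\norm{\mathsf{y}-(\mathsf{z}+\mathsf{z}^*)}_{\mathsf{G}}^2/2\leq\lim_n\moyo{\mathsf{h}_n}{1}(\mathsf{z}+\mathsf{z}^*)$ for each $m$ by weak lower semicontinuity, and one then lets $m\uparrow\pinf$), though you should be aware that this convergence of envelopes is precisely the content being outsourced to \cite{Penn03} and is therefore the crux, not a footnote. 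What each approach buys: the paper's proof is two lines at the cost of an external reference; yours is fully self-contained and elementary, at the cost of having to carry out the Mosco/epi-convergence argument explicitly.
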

\begin{proof}
Theorem~\ref{t:2}\ref{t:2i-}--\ref{t:2i} state that
\begin{equation}
\label{e:frm5}
f=\leftindex^{\mathfrak{G}}{\int}_{\Omega}^{\oplus}
\mathsf{f}_{\omega}\mu(d\omega)\,\,
\text{is well defined and lies in}\,\,\Gamma_0\brk!{\HH}.
\end{equation}
On the other hand, according to
Proposition~\ref{p:9}\ref{p:9i},
\begin{equation}
\label{e:zc2u}
L\colon\mathsf{G}\to\HH\colon\mathsf{z}\mapsto
\mathfrak{e}_{\mathsf{L}}\mathsf{z}
\,\,\text{is well defined, linear, and bounded}.
\end{equation}

\ref{p:90i}:
Because $L\mathsf{w}\in\dom f$, it follows from \eqref{e:chrp},
\eqref{e:frm5}, and \eqref{e:zc2u} that
\begin{equation}
\label{e:99o2}
\mathsf{g}=f\circ L\in\Gamma_0(\mathsf{G}).
\end{equation}

\ref{p:90ii}:
It results from Theorem~\ref{t:2}\ref{t:2ii},
Proposition~\ref{p:16}, and Moreau's decomposition
\cite[Theorem~14.3(ii)]{Livre1} that
\begin{equation}
\brk!{\forall\gamma\in\RPP}\quad
\moyo{(\partial f)}{\gamma}
=\leftindex^{\mathfrak{G}}{\Int}_{\Omega}^{\oplus}
\moyo{(\partial\mathsf{f}_{\omega})}{\gamma}\mu(d\omega)
=\leftindex^{\mathfrak{G}}{\Int}_{\Omega}^{\oplus}
\prox_{\gamma^{-1}\mathsf{f}_{\omega}}\circ\brk1{
\mathop{\gamma^{-1}\Id_{\HW}}}\mu(d\omega).
\end{equation}
Hence, for every $\gamma\in\RPP$ and every
$\mathsf{w}\in\mathsf{G}$, since
$\mathfrak{e}_{\mathsf{L}}\mathsf{w}\colon\omega\mapsto
\LW\mathsf{w}$ is a representative in $\GG$ of $L\mathsf{w}$,
Proposition~\ref{p:9}\ref{p:9iv} implies that
\begin{equation}
L^*\brk!{\moyo{(\partial f)}{\gamma}(L\mathsf{w})}
=\int_{\Omega}\LW^*\brk2{
\prox_{\gamma^{-1}\mathsf{f}_{\omega}^*}\brk!{\gamma^{-1}
\LW\mathsf{w}}}\mu(d\omega).
\end{equation}
In addition, appealing to \eqref{e:frm5}, \eqref{e:zc2u},
and \eqref{e:99o2}, we derive from \cite[Theorem~4.1]{Penn03} and
a remark on \cite[p.~88]{Penn03} that
$\gra\partial\mathsf{g}$ is the set of points
$(\mathsf{w},\mathsf{w}^*)\in\mathsf{G}\times\mathsf{G}$
for which there exist sequences
$(\gamma_n)_{n\in\NN}$ in $\RPP$ and
$(\mathsf{w}_n)_{n\in\NN}$ in $\mathsf{G}$ such that
$\gamma_n\downarrow 0$,
$\mathsf{w}_n\to\mathsf{w}$, and
$L^*(\moyo{(\partial f)}{\gamma_n}(L\mathsf{w}_n))
\to\mathsf{w}_n^*$. Altogether, the proof is complete.
\end{proof}

\section{Application to integral composite inclusion problems}
\label{sec:5}

Let $\mathsf{G}$ and $(\mathsf{H}_k)_{1\leq k\leq p}$ be real
Hilbert spaces. For every $k\in\{1,\ldots,p\}$, let
$\mathsf{A}_k\colon\mathsf{H}_k\to2^{\mathsf{H}_k}$ be monotone and
let $\mathsf{L}_k\colon\mathsf{G}\to\mathsf{H}_k$ be linear and
bounded. Finite compositions of the form
$\sum_{k=1}^p\mathsf{L}_k^*\circ\mathsf{A}_k\circ\mathsf{L}_k$
arise in many theoretical and modeling aspects of monotone operator
theory \cite{Livre1,Brow69,Acnu24,Ekel99,Ghou09}. The main object
of this section is to extend this construction to arbitrary
families of monotone and linear operators. More precisely, our
focus is on the following monotonicity-preserving operation, which
involves the Aumann integral of \eqref{e:aum3}.

\begin{proposition}
\label{p:5}
Suppose that Assumption~\ref{a:1} is in force.
Let $\mathsf{G}$ be a separable real Hilbert space and,
for every $\omega\in\Omega$, let
$\AW\colon\HW\to 2^{\HW}$ be monotone and let
$\LW\colon\mathsf{G}\to\HW$ be linear and bounded. Then
\begin{equation}
\label{e:kt2z}
\mathsf{M}\colon\mathsf{G}\to 2^{\mathsf{G}}\colon
\mathsf{z}\mapsto\int_{\Omega}\LW^*
\brk!{\AW(\LW\mathsf{z})}\mu(d\omega)
\end{equation}
is monotone.
\end{proposition}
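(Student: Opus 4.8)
The plan is to unwind the definitions of monotonicity \eqref{e:b0} and of the Aumann integral \eqref{e:aum3}, reduce the target inequality to a pointwise (almost everywhere) one inside an integral, and close the argument by combining the monotonicity of each $\AW$ with the adjoint identity $\scal{\LW^*\mathsf{w}^*}{\mathsf{v}}_{\mathsf{G}}=\scal{\mathsf{w}^*}{\LW\mathsf{v}}_{\HW}$.

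First I would fix $\mathsf{z}$ and $\mathsf{y}$ in $\mathsf{G}$ together with $\mathsf{z}^*\in\mathsf{M}\mathsf{z}$ and $\mathsf{y}^*\in\mathsf{M}\mathsf{y}$; if either $\mathsf{M}\mathsf{z}$ or $\mathsf{M}\mathsf{y}$ is empty there is nothing to prove. By \eqref{e:kt2z} and \eqref{e:aum3}, there exist $x^*$ and $y^*$ in $\mathscr{L}^1(\Omega,\FF,\mu;\mathsf{G})$ such that $\mathsf{z}^*=\int_{\Omega}x^*(\omega)\mu(d\omega)$, $\mathsf{y}^*=\int_{\Omega}y^*(\omega)\mu(d\omega)$, and, for $\mu$-almost every $\omega\in\Omega$, $x^*(\omega)\in\LW^*(\AW(\LW\mathsf{z}))$ and $y^*(\omega)\in\LW^*(\AW(\LW\mathsf{y}))$. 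Hence, for $\mu$-almost every $\omega\in\Omega$, one may select $u^*(\omega)\in\AW(\LW\mathsf{z})$ and $v^*(\omega)\in\AW(\LW\mathsf{y})$ with $x^*(\omega)=\LW^*u^*(\omega)$ and $y^*(\omega)=\LW^*v^*(\omega)$. I stress that no measurability of $\omega\mapsto u^*(\omega)$ or $\omega\mapsto v^*(\omega)$ is needed, only the scalar pointwise inequality derived below, so no measurable-selection result is invoked.

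Next, since the scalar product with a fixed vector commutes with the Bochner integral (cf. \cite[Th\'eor\`eme~5.8.16]{Sch93b}, as in the proof of Proposition~\ref{p:9}\ref{p:9iv}), I would write
\[
\scal{\mathsf{z}-\mathsf{y}}{\mathsf{z}^*-\mathsf{y}^*}_{\mathsf{G}}
=\int_{\Omega}\scal{\mathsf{z}-\mathsf{y}}{x^*(\omega)-y^*(\omega)}_{\mathsf{G}}\mu(d\omega).
\]
For $\mu$-almost every $\omega\in\Omega$, the adjoint identity yields $\scal{\mathsf{z}-\mathsf{y}}{x^*(\omega)-y^*(\omega)}_{\mathsf{G}}=\scal{\LW\mathsf{z}-\LW\mathsf{y}}{u^*(\omega)-v^*(\omega)}_{\HW}$, and since $u^*(\omega)\in\AW(\LW\mathsf{z})$ and $v^*(\omega)\in\AW(\LW\mathsf{y})$, the monotonicity \eqref{e:b0} of $\AW$ forces this quantity to be nonnegative. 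Thus the integrand, which is $\FF$-measurable because $x^*$ and $y^*$ are, is nonnegative $\mae$, and therefore $\scal{\mathsf{z}-\mathsf{y}}{\mathsf{z}^*-\mathsf{y}^*}_{\mathsf{G}}\geq 0$. Since $\mathsf{z}$, $\mathsf{y}$, $\mathsf{z}^*$, and $\mathsf{y}^*$ were arbitrary, $\mathsf{M}$ is monotone.

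The argument poses no genuine obstacle; the only subtleties are bookkeeping ones, namely keeping the ``$\mu$-almost every'' qualifiers coherent throughout and observing that the pointwise preimage selections $u^*$ and $v^*$ need not be measurable, so the justification of the integral exchange rests solely on $x^*,y^*\in\mathscr{L}^1(\Omega,\FF,\mu;\mathsf{G})$ and not on any regularity of $u^*$ or $v^*$.
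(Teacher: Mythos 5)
Your proof is correct and follows essentially the same route as the paper's: unwind the Aumann integral to get integrable selections, use the adjoint identity and the pointwise monotonicity of each $\AW$ to make the integrand nonnegative $\mae$, and pull the scalar product inside the Bochner integral via \cite[Th\'eor\`eme~5.8.16]{Sch93b}. Your explicit remark that the pointwise preimage selections $u^*$, $v^*$ need no measurability is a nice clarification of a point the paper leaves implicit (there the selections are simply taken in $\prod_{\omega\in\Omega}\HW$ with only $\omega\mapsto\LW^*(x^*(\omega))$ required to be integrable), but it does not change the argument.
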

\begin{proof}
Suppose that $(\mathsf{z},\mathsf{z}^*)$ and
$(\mathsf{w},\mathsf{w}^*)$ are in $\gra\mathsf{M}$. Then,
by \eqref{e:aum3}, there exist $x^*$ and $y^*$ in
$\prod_{\omega\in\Omega}\HW$ such that
\begin{equation}
\begin{cases}
(\forallmu\omega\in\Omega)\;\;
x^*(\omega)\in\AW(\LW\mathsf{z})\,\,\text{and}\,\,
y^*(\omega)\in\AW(\LW\mathsf{w})\\
\text{the mappings}\,\,
\omega\mapsto\LW^*\brk{x^*(\omega)}
\,\,\text{and}\,\,
\omega\mapsto\LW^*\brk{y^*(\omega)}
\,\,\text{lie in}\,\,
\mathscr{L}^1(\Omega,\FF,\mu;\mathsf{G})\\
\int_{\Omega}\LW^*\brk{x^*(\omega)}\mu(d\omega)=\mathsf{z}^*
\,\,\text{and}\,\,
\int_{\Omega}\LW^*\brk{y^*(\omega)}\mu(d\omega)=\mathsf{w}^*.
\end{cases}
\end{equation}
The monotonicity of the operators
$(\AW)_{\omega\in\Omega}$ ensures that
\begin{equation}
(\forallmu\omega\in\Omega)\quad
\scal!{\mathsf{z}-\mathsf{w}}{
\LW^*\brk!{x^*(\omega)}-\LW^*\brk!{y^*(\omega)}}_{\mathsf{G}}
=\scal{\LW\mathsf{z}-\LW\mathsf{w}}{x^*(\omega)-y^*(\omega)}_{\HW}
\geq 0.
\end{equation}
Therefore, using \cite[Th\'eor\`eme~5.8.16]{Sch93b}, we obtain
\begin{align}
\scal{\mathsf{z}-\mathsf{w}}{\mathsf{z}^*-
\mathsf{w}^*}_{\mathsf{G}}
&=\scal3{\mathsf{z}-\mathsf{w}}{
\int_{\Omega}\LW^*\brk!{x^*(\omega)}\mu(d\omega)-
\int_{\Omega}\LW^*\brk!{y^*(\omega)}\mu(d\omega)}_{\mathsf{G}}
\nonumber\\
&=\int_{\Omega}\scal!{\mathsf{z}-\mathsf{w}}{
\LW^*\brk!{x^*(\omega)}-\LW^*\brk!{y^*(\omega)}}_{\mathsf{G}}\,
\mu(d\omega)
\nonumber\\
&\geq 0,
\end{align}
which yields the assertion.
\end{proof}

The inclusion problem under investigation involves the integral
composite operator \eqref{e:kt2z} and is placed in the following
environment.

\begin{assumption}
\label{a:4}
Assumption~\ref{a:1} and the following are in force:
\begin{enumerate}[label={\normalfont[\Alph*]}]
\item
\label{a:4a}
$\mathsf{G}$ is a separable real Hilbert space.
\item
\label{a:4b}
For every $\omega\in\Omega$,
$\LW\colon\mathsf{G}\to\HW$ is linear and bounded.
\item
\label{a:4c}
For every $\mathsf{z}\in\mathsf{G}$, the mapping
$\mathfrak{e}_{\mathsf{L}}\mathsf{z}\colon
\omega\mapsto\LW\mathsf{z}$ lies in $\mathfrak{G}$.
\item
\label{a:4d}
$\int_{\Omega}\norm{\LW}^2\mu(d\omega)<\pinf$.
\end{enumerate}
\end{assumption}

\begin{problem}
\label{prob:1}
Suppose that Assumptions~\ref{a:2} and \ref{a:4} are in force,
and let $\mathsf{W}\colon\mathsf{G}\to 2^{\mathsf{G}}$ be
maximally monotone. The objective is to
\begin{equation}
\label{e:89p}
\text{find}\,\,\mathsf{z}\in\mathsf{G}\,\,\text{such that}\,\,
\mathsf{0}\in\mathsf{W}\mathsf{z}+
\int_{\Omega}\LW^*\brk!{\AW(\LW\mathsf{z})}\mu(d\omega).
\end{equation}
\end{problem}

In traditional variational methods, duality provides a powerful
framework to analyze and solve minimization problems 
\cite{Livre1,Ekel99,Rock74}.
More generally, for inclusion problems, 
notions of duality have been proposed at various levels of
generality \cite{Moor22,Svva12,Penn00,Robi99}
in the context of Example~\ref{ex:1}\ref{ex:1i+}, 
which corresponds to the inclusion problem
\begin{equation}
\label{e:1}
\text{find}\,\,\mathsf{z}\in\mathsf{G}\,\,\text{such that}\,\,
\mathsf{0}\in\mathsf{Wz}+\sum_{k=1}^p
\mathsf{L}_k^*\brk!{\mathsf{A}_k(\mathsf{L}_k\mathsf{z})}.
\end{equation}
The next theorem extends duality concepts to the general setting
of Problem~\ref{prob:1}.

\begin{theorem}
\label{t:35}
Consider the setting of Problem~\ref{prob:1},
as well as the \emph{dual problem}
\begin{equation}
\label{e:89d}
\text{find}\,\,x^*\in\HH\,\,\text{such that}\,\,
\brk3{\exi\mathsf{z}\in\mathsf{W}^{-1}\brk3{
{-}\int_{\Omega}\LW^*\brk!{x^*(\omega)}
\mu(d\omega)}}(\forallmu\omega\in\Omega)\,\,
\LW\mathsf{z}\in\AW^{-1}\brk!{x^*(\omega)},
\end{equation}
and denote by $\mathsf{Z}$ and $Z^*$ the sets of
solutions to \eqref{e:89p} and \eqref{e:89d}, respectively.
Let $\kut$ be the \emph{Kuhn--Tucker operator} associated
to Problem~\ref{prob:1}, that is,
\begin{equation}
\label{e:89kt}
\begin{array}{ccll}
\kut\colon
&\mathsf{G}\oplus\HH&\to &2^{\mathsf{G}\oplus\HH}\\
&\brk{\mathsf{z},x^*}&\mapsto&
\brk3{\mathsf{W}\mathsf{z}+\Int_{\Omega}
\LW^*\brk!{x^*(\omega)}\mu(d\omega)}\times
\brk3{-\mathfrak{e}_{\mathsf{L}}\mathsf{z}+
\leftindex^{\mathfrak{G}}{\Int}_{\Omega}^{\oplus}
\AW^{-1}\brk!{x^*(\omega)}\mu(d\omega)},
\end{array}
\end{equation}
and let $\sad$ be the \emph{saddle operator} associated to
Problem~\ref{prob:1}, that is,
\begin{equation}
\label{e:89sad}
\begin{array}{ccll}
\sad\colon
&\mathsf{G}\oplus\HH\oplus\HH&\to
&2^{\mathsf{G}\oplus\HH\oplus\HH}\\
&\brk{\mathsf{z},x,u^*}&\mapsto&
\brk3{\mathsf{W}\mathsf{z}+\Int_{\Omega}
\LW^*\brk!{u^*(\omega)}\mu(d\omega)}
\times\brk3{\leftindex^{\mathfrak{G}}{\Int}_{\Omega}^{\oplus}
\AW\brk!{x(\omega)}\mu(d\omega)-u^*}
\times\brk!{{-}\mathfrak{e}_{\mathsf{L}}\mathsf{z}+x}.
\end{array}
\end{equation}
Then the following hold:
\begin{enumerate}
\item
\label{t:35i}
$\kut$ and $\sad$ are maximally monotone.
\item
\label{t:35ii}
$\zer\kut$ and $\zer\sad$ are closed and convex.
\item
\label{t:35iii}
Let $(\mathsf{z},x^*)\in\mathsf{G}\times\HH$. Then
$(\mathsf{z},x^*)\in\zer\kut$
$\Rightarrow$
$(\mathsf{z},x^*)\in\mathsf{Z}\times Z^*$.
\item
\label{t:35iv}
Let $(\mathsf{z},x,u^*)\in\mathsf{G}\times\HH\times\HH$. Then
$(\mathsf{z},x,u^*)\in\zer\sad$
$\Rightarrow$
$(\mathsf{z},u^*)\in\mathsf{Z}\times Z^*$.
\item
\label{t:35v}
$\zer\sad\neq\emp$
$\Leftrightarrow$
$\zer\kut\neq\emp$
$\Leftrightarrow$
$Z^*\neq\emp$
$\Rightarrow$
$\mathsf{Z}\neq\emp$.
\end{enumerate}
\end{theorem}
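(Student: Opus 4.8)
The plan is to recast the Kuhn--Tucker and saddle operators in operator-theoretic form and then reduce the theorem to standard facts about sums of maximally monotone operators. Under Assumptions~\ref{a:2} and~\ref{a:4}, set $A=\leftindex^{\mathfrak{G}}{\int}_{\Omega}^{\oplus}\AW\mu(d\omega)$, which is maximally monotone by Theorem~\ref{t:1}\ref{t:1i}, and hence so is $A^{-1}$; by Proposition~\ref{p:9} the map $L\colon\mathsf{z}\mapsto\mathfrak{e}_{\mathsf{L}}\mathsf{z}$ is bounded and linear from $\mathsf{G}$ to $\HH$, with adjoint $L^*\colon x^*\mapsto\int_{\Omega}\LW^*(x^*(\omega))\mu(d\omega)$. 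Using Definition~\ref{d:6} together with Proposition~\ref{p:1}\ref{p:1iv} to identify $\leftindex^{\mathfrak{G}}{\int}_{\Omega}^{\oplus}\AW(x(\omega))\mu(d\omega)=Ax$ and $\leftindex^{\mathfrak{G}}{\int}_{\Omega}^{\oplus}\AW^{-1}(x^*(\omega))\mu(d\omega)=A^{-1}x^*$, formulas \eqref{e:89kt} and \eqref{e:89sad} become $\kut\colon(\mathsf{z},x^*)\mapsto(\mathsf{W}\mathsf{z}+L^*x^*)\times({-}L\mathsf{z}+A^{-1}x^*)$ and $\sad\colon(\mathsf{z},x,u^*)\mapsto(\mathsf{W}\mathsf{z}+L^*u^*)\times(Ax-u^*)\times({-}L\mathsf{z}+x)$.

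For~\ref{t:35i}, I would write $\kut=M_1+S_1$, where $M_1=\mathsf{W}\oplus A^{-1}$ is maximally monotone as a direct sum of maximally monotone operators and $S_1\colon(\mathsf{z},x^*)\mapsto(L^*x^*,-L\mathsf{z})$ is a bounded linear operator on $\mathsf{G}\oplus\HH$ which is skew-adjoint, hence monotone with full domain, hence maximally monotone by \cite[Corollary~20.28]{Livre1}; the sum theorem for a maximally monotone operator plus a full-domain maximally monotone operator \cite[Corollary~25.5(i)]{Livre1} then yields that $\kut$ is maximally monotone. Likewise $\sad=M_2+S_2$, with $M_2=\mathsf{W}\oplus A\oplus N_{\{\mathsf{0}\}}^{-1}$ (the last summand being the maximally monotone operator $u^*\mapsto\{\mathsf{0}\}$) and $S_2\colon(\mathsf{z},x,u^*)\mapsto(L^*u^*,-u^*,-L\mathsf{z}+x)$, which one checks is skew-adjoint. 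This is the technical core of the argument; the only delicate points are the two short bilinear computations establishing that $S_1$ and $S_2$ are skew-adjoint and the observation that the zero operator $u^*\mapsto\{\mathsf{0}\}$ is maximally monotone. Then~\ref{t:35ii} is immediate, since the set of zeros of a maximally monotone operator is closed and convex.

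Items~\ref{t:35iii}--\ref{t:35v} then amount to unwinding Definition~\ref{d:6} and the Aumann integral \eqref{e:aum3}. If $(\mathsf{z},x^*)\in\zer\kut$, then $-L^*x^*\in\mathsf{W}\mathsf{z}$ and $x^*\in A(L\mathsf{z})$; the latter reads $x^*(\omega)\in\AW(\LW\mathsf{z})$ for $\mu$-almost every $\omega$, so, since $\omega\mapsto\LW^*(x^*(\omega))$ is $\mu$-integrable by Proposition~\ref{p:9}\ref{p:9iii}, we get $L^*x^*\in\int_{\Omega}\LW^*(\AW(\LW\mathsf{z}))\mu(d\omega)$, whence \eqref{e:89p} holds and $\mathsf{z}\in\mathsf{Z}$; reading the same two relations through \eqref{e:89d} gives $x^*\in Z^*$, proving~\ref{t:35iii}. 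For~\ref{t:35iv}, $(\mathsf{z},x,u^*)\in\zer\sad$ forces $x=L\mathsf{z}$ and $u^*\in Ax$, hence $(\mathsf{z},u^*)\in\zer\kut$ and~\ref{t:35iii} applies. Finally,~\ref{t:35v} follows by chaining these with their converses: $(\mathsf{z},x^*)\in\zer\kut\Rightarrow(\mathsf{z},L\mathsf{z},x^*)\in\zer\sad$, and if $x^*\in Z^*$ then, taking a witness $\mathsf{z}$ as in \eqref{e:89d}, one has $-L^*x^*\in\mathsf{W}\mathsf{z}$ and $L\mathsf{z}\in A^{-1}x^*$, i.e.\ $(\mathsf{z},x^*)\in\zer\kut$; and $\zer\kut\neq\emp\Rightarrow\mathsf{Z}\neq\emp$ is already part of~\ref{t:35iii}. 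The main obstacle is thus~\ref{t:35i}: once the block decomposition is set up and the maximal monotonicity of $A$ — the place where Assumption~\ref{a:2} enters, via Theorem~\ref{t:1}\ref{t:1i} — is in hand, everything else is routine verification.
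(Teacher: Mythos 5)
Your proposal is correct and follows essentially the same route as the paper: the same reduction of $\kut$ and $\sad$ to the block forms $(\mathsf{z},x^*)\mapsto(\mathsf{W}\mathsf{z}+L^*x^*)\times({-}L\mathsf{z}+A^{-1}x^*)$ and $(\mathsf{z},x,u^*)\mapsto(\mathsf{W}\mathsf{z}+L^*u^*)\times(Ax-u^*)\times({-}L\mathsf{z}+x)$ via Theorem~\ref{t:1}\ref{t:1i}, Proposition~\ref{p:1}\ref{p:1iv}, and Proposition~\ref{p:9}, followed by the same unwinding of the Aumann integral for items \ref{t:35iii}--\ref{t:35v}. The only difference is that where the paper simply cites \cite[Proposition~26.32(iii)]{Livre1} and \cite[Lemma~2.2]{Jmaa20} for item \ref{t:35i}, you reprove those facts directly through the decomposition into a maximally monotone diagonal part plus a full-domain skew bounded linear operator, which is precisely the argument underlying the cited results.
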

\begin{proof}
Set
\begin{equation}
\label{e:84jb}
A=\leftindex^{\mathfrak{G}}{\int}_{\Omega}^{\oplus}
\AW\mu(d\omega).
\end{equation}
Theorem~\ref{t:1}\ref{t:1i} states that
\begin{equation}
\label{e:xgy0}
\text{$A$ is maximally monotone},
\end{equation}
while Proposition~\ref{p:1}\ref{p:1iv} states that
\begin{equation}
A^{-1}=\leftindex^{\mathfrak{G}}{\int}_{\Omega}^{\oplus}
\AW^{-1}\mu(d\omega).
\end{equation}
Moreover, in view of Assumption~\ref{a:4},
items \ref{p:9i} and \ref{p:9iv} of
Proposition~\ref{p:9} imply that the operator
\begin{equation}
\label{e:9hw1}
L\colon\mathsf{G}\to\HH\colon\mathsf{z}\mapsto
\mathfrak{e}_{\mathsf{L}}\mathsf{z}
\end{equation}
is well defined, linear, and bounded, with adjoint 
\begin{equation}
\label{e:aavw}
L^*\colon\HH\to\mathsf{G}\colon x^*\mapsto
\int_{\Omega}\LW^*\brk!{x^*(\omega)}\mu(d\omega).
\end{equation}
Hence, we deduce from \eqref{e:89kt} that
\begin{equation}
\label{e:o4kut}
\kut\colon\mathsf{G}\oplus\HH\to 2^{\mathsf{G}\oplus\HH}\colon
(\mathsf{z},x^*)\mapsto
\brk!{\mathsf{W}\mathsf{z}+L^*x^*}\times
\brk!{{-}L\mathsf{z}+A^{-1}x^*}
\end{equation}
and from \eqref{e:89sad} that
\begin{equation}
\label{e:o4sad}
\sad\colon\mathsf{G}\oplus\HH\oplus\HH\to
2^{\mathsf{G}\oplus\HH\oplus\HH}\colon
(\mathsf{z},x,u^*)\mapsto
\brk!{\mathsf{W}\mathsf{z}+L^*u^*}\times
(Ax-u^*)\times\brk[c]!{{-}L\mathsf{z}+x}.
\end{equation}
Additionally, the dual problem \eqref{e:89d} can be rewritten as
\begin{equation}
\label{e:o4dd}
\text{find}\,\,x^*\in\HH\,\,\text{such that}\,\,
0\in{-}L\brk!{\mathsf{W}^{-1}(-L^*x^*)}+A^{-1}x^*.
\end{equation}

\ref{t:35i}:
In view of \eqref{e:xgy0} and the maximal monotonicity of
$\mathsf{W}$, it follows from
\eqref{e:o4kut} and
\cite[Proposition~26.32(iii)]{Livre1}
that $\kut$ is maximally monotone, and from
\eqref{e:o4sad} and \cite[Lemma~2.2(ii)]{Jmaa20} that
$\sad$ is maximally monotone.

\ref{t:35ii}:
Combine \ref{t:35i} and \cite[Proposition~23.39]{Livre1}.

\ref{t:35iii}:
Suppose that $(\mathsf{z},x^*)\in\zer\kut$. Then, by
\eqref{e:o4kut}, $L\mathsf{z}\in A^{-1}x^*$ or, equivalently,
$x^*\in A(L\mathsf{z})$. Therefore, it follows from
\eqref{e:9hw1}, Assumption~\ref{a:4}\ref{a:4c}, and
\eqref{e:84jb} that, for $\mu$-almost every $\omega\in\Omega$,
$x^*(\omega)\in\AW(\LW\mathsf{z})$ and, in turn, that
$\LW^*(x^*(\omega))\in\LW^*(\AW(\LW\mathsf{z}))$.
Hence, because Proposition~\ref{p:9}\ref{p:9iii}
asserts that the mapping
$\Omega\to\mathsf{G}\colon\omega\mapsto\LW^*(x^*(\omega))$
is $\mu$-integrable, we infer from
\eqref{e:89kt} and \eqref{e:aum3} that
\begin{equation}
\mathsf{0}
\in\mathsf{W}\mathsf{z}+
\int_{\Omega}\LW^*\brk!{x^*(\omega)}\mu(d\omega)
\subset\mathsf{W}\mathsf{z}+
\int_{\Omega}\LW^*\brk!{\AW(\LW\mathsf{z})}\mu(d\omega).
\end{equation}
Finally, since $(\mathsf{z},x^*)\in\zer\kut$,
it follows from \cite[Proposition~26.33(ii)]{Livre1} that $x^*$
solves \eqref{e:o4dd} and, therefore, \eqref{e:89d}.

\ref{t:35iv}:
Argue as in \ref{t:35iii}.

\ref{t:35v}:
By virtue of \eqref{e:o4kut}, \eqref{e:o4sad}, and \eqref{e:o4dd},
the equivalences $\zer\sad\neq\emp$ $\Leftrightarrow$
$\zer\kut\neq\emp$ $\Leftrightarrow$ $Z^*\neq\emp$
follow from \cite[Lemma~2.2(iv)]{Jmaa20}, while the implication
$\zer\kut\neq\emp$ $\Rightarrow$
$\mathsf{Z}\neq\emp$ follows from
\ref{t:35iii}.
\end{proof}

\begin{remark}
\label{r:2}
Consider the setting of Theorem~\ref{t:35}, and define
$A$ as in \eqref{e:84jb} and $L$ as in \eqref{e:9hw1}.
\begin{enumerate}
\item
$\zer(\mathsf{W}+L^*\circ A\circ L)$ is a subset of
$\mathsf{Z}$ which, in general, is proper.
\item
According to Theorem~\ref{t:35}\ref{t:35iii}--\ref{t:35iv},
to solve \eqref{e:89p} and its dual \eqref{e:89d},
it is enough to find a zero of the operator $\kut$ of
\eqref{e:89kt} or of the operator $\sad$ of \eqref{e:89sad}. 
This can be achieved by using splitting algorithms \cite{Acnu24}. 
For instance, to find a zero of $\sad$, each operator
$\AW$ is decomposed as $\AW=\AW^{\MM}+\AW^{\CC}+\AW^{\LL}$,
where $\AW^{\MM}\colon\HW\to 2^{\HW}$ is maximally monotone,
$\AW^{\CC}\colon\HW\to\HW$ is cocoercive, and
$\AW^{\LL}\colon\HW\to\HW$ is monotone and Lipschitzian.
Thus, $A$ is decomposed as
\begin{equation}
A=
\leftindex^{\mathfrak{G}}{\int}_{\Omega}^{\oplus}
\AW^{\MM}\mu(d\omega)+
\leftindex^{\mathfrak{G}}{\int}_{\Omega}^{\oplus}
\AW^{\CC}\mu(d\omega)+
\leftindex^{\mathfrak{G}}{\int}_{\Omega}^{\oplus}
\AW^{\LL}\mu(d\omega).
\end{equation}
One can then employ the algorithm of \cite[Section~8.5]{Acnu24}.
It requires the resolvent of
$\leftindex^{\mathfrak{G}}{\int}_{\Omega}^{\oplus}
\AW^{\MM}\mu(d\omega)$, which can be implemented via
Theorem~\ref{t:1}\ref{t:1iia}, as well as Euler steps on
$\leftindex^{\mathfrak{G}}{\int}_{\Omega}^{\oplus}
\AW^{\CC}\mu(d\omega)$ and
$\leftindex^{\mathfrak{G}}{\int}_{\Omega}^{\oplus}
\AW^{\LL}\mu(d\omega)$, which can be implemented via
Proposition~\ref{p:2}\ref{p:2i}--\ref{p:2ii}.
\end{enumerate}
\end{remark}

We conclude the paper by providing a few illustrations of
Problem~\ref{prob:1} and the proposed duality framework;
see \cite{Jota24} for further applications.

\begin{example}
\label{ex:40}
In the setting of Example~\ref{ex:1}\ref{ex:1i+}, the primal
inclusion \eqref{e:89p} reduces to \eqref{e:1} and
Theorem~\ref{t:35} specializes to results found in
\cite[Proposition~1]{Moor22}.
\end{example}

\begin{example}
Suppose that Assumptions~\ref{a:3} and \ref{a:4} are in force,
let $\mathsf{g}\in\Gamma_0(\mathsf{G})$, and 
suppose that there exists $z^*\in\GG$ such that
\begin{equation}
\brk3{\exi\mathsf{w}\in
\partial\mathsf{g}^*\brk3{{-}\int_{\Omega}
\LW^*\brk!{z^*(\omega)}
\mu(d\omega)}}(\forallmu\omega\in\Omega)\quad
\LW\mathsf{w}\in
\partial\mathsf{f}_{\omega}^*\brk!{z^*(\omega)}.
\end{equation}
Now set $\mathsf{W}=\partial\mathsf{g}$ and
$(\forall\omega\in\Omega)$ $\AW=\partial\mathsf{f}_{\omega}$.
Then it follows from Theorem~\ref{t:2}, Proposition~\ref{p:9},
and standard convex calculus that every solution to the primal
problem \eqref{e:89p} solves
\begin{equation}
\label{e:2eep}
\minimize{\mathsf{z}\in\mathsf{G}}{\mathsf{g}(\mathsf{z})+
\int_{\Omega}\mathsf{f}_{\omega}
\brk!{\LW\mathsf{z}}\mu(d\omega)},
\end{equation}
and every solution to the dual problem \eqref{e:89d} solves
\begin{equation}
\label{e:2eed}
\minimize{x^*\in\HH}{\mathsf{g}^*\brk3{{-}\int_{\Omega}
\LW^*\brk!{x^*(\omega)}\mu(d\omega)}+
\int_{\Omega}\mathsf{f}_{\omega}^*\brk!{x^*(\omega)}\mu(d\omega)}.
\end{equation}
A noteworthy instance is when $\mu$ is a probability measure
and, for every $\omega\in\Omega$, $\HW=\mathsf{G}$ and
$\LW=\Id_{\mathsf{G}}$. In this setting, \eqref{e:2eep} describes
a standard stochastic optimization problem \cite{Nemi09}. Our
setting makes it possible to extend such stochastic problems to
composite ones involving functions acting on different spaces
$(\HW)_{\omega\in\Omega}$.
\end{example}

\begin{example}
Suppose that Assumption~\ref{a:4} is in force,
let $\mathsf{W}\colon\mathsf{G}\to 2^{\mathsf{G}}$ be
maximally monotone, and,
for every $\omega\in\Omega$, let $\BW\colon\HW\to 2^{\HW}$
be maximally monotone. Additionally, suppose that
$\dom\leftindex^{\mathfrak{G}}{\int}_{\Omega}^{\oplus}
\BW\mu(d\omega)\neq\emp$ and that, for every $x\in\GG$,
the mapping $\omega\mapsto J_{\BW}(x(\omega))$ lies in
$\mathfrak{G}$. Now let $\gamma\in\RPP$ and set
$(\forall\omega\in\Omega)$ $\AW=\moyo{\BW}{\gamma}$.
Then, by Theorem~\ref{t:1}\ref{t:1iib} and
Proposition~\ref{p:2}\ref{p:2ii}, the family
$(\AW)_{\omega\in\Omega}$ satisfies Assumption~\ref{a:2}. Further,
the primal problem \eqref{e:89p} becomes
\begin{equation}
\label{e:w7q3}
\text{find}\,\,\mathsf{z}\in\mathsf{G}\,\,\text{such that}\,\,
\mathsf{0}\in\mathsf{W}\mathsf{z}+
\int_{\Omega}\LW^*\brk!{\moyo{\BW}{\gamma}
(\LW\mathsf{z})}\mu(d\omega),
\end{equation}
and the dual problem \eqref{e:89d} reads
\begin{multline}
\label{e:w7qd}
\text{find}\,\,x^*\in\HH\,\,\text{such that}\\
\brk3{\exi\mathsf{z}\in\mathsf{W}^{-1}\brk3{
{-}\int_{\Omega}\LW^*\brk!{x^*(\omega)}
\mu(d\omega)}}(\forallmu\omega\in\Omega)\,\,
\LW\mathsf{z}\in\BW^{-1}\brk!{x^*(\omega)}
+\gamma x^*(\omega).
\end{multline}
As in the special case discussed in \cite[Proposition~4.1]{Siim22},
which is set in the context of Example~\ref{ex:1}\ref{ex:1i+},
the inclusion \eqref{e:w7q3} can be shown to be an
exact relaxation of the inclusion problem
\begin{equation}
\text{find}\,\,\mathsf{z}\in\zer\mathsf{W}\,\,\text{such that}\,\,
(\forallmu\omega\in\Omega)\,\,
\moyo{\BW}{\gamma}(\LW\mathsf{z})=\mathsf{0}
\end{equation}
or, equivalently, of the so-called split common zero problem
\begin{equation}
\label{e:w3i0}
\text{find}\,\,\mathsf{z}\in\zer\mathsf{W}\,\,\text{such that}\,\,
(\forallmu\omega\in\Omega)\,\,
\mathsf{0}\in\BW(\LW\mathsf{z})
\end{equation}
in the sense that, if \eqref{e:w3i0} has solutions,
they are the same as those of \eqref{e:w7q3}. If we further
specialize to the case when $\mu$ is a probability measure,
$\mathsf{W}=\mathsf{0}$, and for every $\omega\in\Omega$,
$\HW=\mathsf{G}$, $\LW=\Id_{\mathsf{G}}$, and
$\BW=N_{\mathsf{C}_{\omega}}$,
where $\mathsf{C}_{\omega}$ is a nonempty closed convex subset
of $\mathsf{G}$, then \eqref{e:w3i0} collapses to the stochastic 
convex feasibility problem of \cite{Butn95}.
\end{example}

\end{document}